\tikzset
{marking1/.style=
	{decoration=
		{markings,
			mark= between positions 0.03 and 0.97 step 5 mm with {\arrow[line width=0.5pt]{>}}
		},
		postaction=decorate
	}
}
\def\timenow{\@tempcnta\time
	\@tempcntb\@tempcnta
	\divide\@tempcntb60
	\ifnum10>\@tempcntb0\fi\number\@tempcntb
	\multiply\@tempcntb60
	\advance\@tempcnta-\@tempcntb
	:\ifnum10>\@tempcnta0\fi\number\@tempcnta}
\numberwithin{equation}{section}
\newtheorem{theo}{Theorem}[section]
\newtheorem{prop}[theo]{Proposition}
\newtheorem{cond}[theo]{Condition}
\newtheorem{lemme}[theo]{Lemma}
\newtheorem{remark}[theo]{Remark}
\newtheorem{defi}[theo]{Definition}
\DeclareFontFamily{U}{mathb}{}
\DeclareFontShape{U}{mathb}{m}{n}{<-5.5> mathb5 <5.5-6.5> mathb6 
<6.5-7.5> mathb7 <7.5-8.5> mathb8 <8.5-9.5> mathb9 <9.5-11> mathb10 
<11-> mathb12}{}
\DeclareSymbolFont{mathb}{U}{mathb}{m}{n}
\DeclareMathSymbol{\blackdiamond}{\mathbin}{mathb}{"0C}
\newcommand{\sel}{\vec{\mathfrak{s}}}
\def\E{\mathbb{E}}
\def\N{\mathbb{N}}
\def\P{\mathbb{P}}
\def\R{\mathbb{R}}
\newcommand\Ac{{\mathscr A}}
\newcommand\Bc{{\mathscr B}}
\newcommand\1{{\bf 1}}
\def\dd{\textnormal{d}}
\newcommand{\bindist}[2]{\textrm{Bin}({#1},{#2})}
\newcommand{\berno}[1]{\textrm{Bern}({#1})}
\newcommand{\hypdist}[3]{\textrm{Hyp}({#1},{#2},{#3})} 
\newcommand{\Eb}  {{\mathbb E}}
\newcommand{\Nb}  {{\mathbb N}}
\newcommand{\Rb}  {{\mathbb R}}
\newcommand{\Ds} {{\mathcal D}}
\newcommand{\Ms} {{\mathcal M}}
\newcommand*{\affmark}[1][*]{\textsuperscript{#1}}
\title[Bernstein duality revisited]{Bernstein duality revisited: frequency-dependent selection, coordinated mutation and opposing environments}
\author{F. Cordero\affmark[1,2] \and S. Hummel\affmark[3] \and G. V{\'e}chambre\affmark[4]}
\address{\newline \affmark[1]BOKU University, Institute of Mathematics, Vienna, Austria
\newline\affmark[2]Faculty of Technology, Bielefeld University, Bielefeld, Germany\newline
\affmark[3]Department of Health Science and Technology, ETH Z\"urich, Z\"urich, Switzerland\newline 
\affmark[4]Academy of Mathematics and Systems Science, Chinese Academy of Sciences, No. 55, Zhongguancun East Road, Haidian District, Beijing, China}
\email{\affmark[1]fernando.cordero@boku.ac.at, affmark[2]fcordero@techfak.uni-bielefeld.de, \affmark[3]shummel@hest.ethz.ch,\newline \affmark[4]vechambre@amss.ac.cn}
\date{\today}
\begin{document} 
\maketitle

\begin{abstract}
	This paper investigates the long-term behavior of a class of $\Lambda$-Wright--Fisher processes incorporating frequency-dependent selection, coordinated (bidirectional) selection, as well as individual and coordinated mutation.
	Our primary analytical tool is Bernstein duality, a generalization of moment duality. 
	We introduce the corresponding dual process and establish the relevant duality relation.
	Without mutation, this work complements earlier studies that employed moment duality, Siegmund duality or other methods to classify the long-term behavior of similar processes.
	Notably, the current analysis encompasses parameter regimes that model bidirectional selection, a scenario that has proven challenging to analyze using moment duality. 
	In the presence of mutation, we establish the ergodic properties of the process.
	
\end{abstract}
\textbf{Keywords:} branching-coalescing particle system, coordination, duality, frequency-dependent selection, $\Lambda$-Wright--Fisher processes, random environment

\bigskip

\section{Introduction}
Populations influenced by evolutionary forces such as natural selection, mutation, and changing environments can exhibit unique long-term patterns in the frequency of different types of individuals. 
Even in simple scenarios where only two distinct types are present, these evolutionary dynamics can lead to a variety of long-term behaviors, including stability, oscillation, or the dominance of one type over the other.
This has recently been demonstrated in \cite{cordhumvech2022} in the setting of two-type $\Lambda$-Wright--Fisher models subject to frequency-dependent selection and coordinated bidirectional selection. 
One of the main results in \cite{cordhumvech2022} is a characterization of possible long-term behaviors under the assumption that the strength of neutral reproductions and of the coordinating forces are not too strong (see \cite[Thm 2.1]{cordhumvech2022} for the precise statement). 
As a consequence of this result, a non-trivial class of models exhibiting coexistence was uncovered. 
Furthermore, it was shown that in such models, coexistence requires frequency-dependent selection. 

In contrast to~\cite{cordhumvech2022}, previous studies considered parameter regimes in which only fixation and extinction were observed. For example, \cite{ParBah15}, \cite{foucart2013impact} and \cite{Gri14} considered $\Lambda$-Wright--Fisher models subject to genic selection. 
The results therein were extended by~\cite{GS18} to models incorporating a special form of frequency-dependent selection (fittest type wins) and by~\cite{casa19e} to populations evolving in an environment favorable to the fittest type (see also \cite{CorderoVechambre}). 
A common theme in these papers was \emph{moment duality}, a simple relation connecting the moments of the type-frequency process to an ancestral process, the moment dual, which counts the number of potential ancestors. 
The existence of a moment dual in this setting is closely related to the property that all modeled forces always favor the same type. 
In particular, bidirectional selection models---where the selection of a type can be either positive or negative depending on the current state of the population or the environment---were not considered.
A first attempt to study models breaking this rule was based on a genealogical approach in~\cite{Cordero2022}, 
where the long-term behavior of $\Lambda$-Wright--Fisher process under general polynomial frequency-dependent selection was analyzed. 
In this context, the strength of selective pressure and the type that is in selective advantage at a given time depend on the type composition of the population. 
The main contribution in \cite{Cordero2022} was to generalize the concept of moment duality to a more general relation, the \emph{Bernstein duality} (see also \cite{Koske24} for more general frequency-dependent selection forms). 
The latter relation still enables computing the moments of the type-frequency process in terms of a tractable ancestral structure. 
Provided the strength of neutral reproductions is sufficiently large in comparison to the strength of selection, it was shown that the probabilities of fixation and extinction are both positive and complementary (no coexistence). 
Another contribution outside the moment duality setting was given in~\cite{Vec2023}, where a Wright--Fisher process with coordinated bidirectional selection was analyzed using combinatorial properties of the underlying ancestral structure. 
The analysis in \cite{cordhumvech2022} uses the Siegmund dual, whose existence is guaranteed under mild conditions (see \cite{Siegmund1976}). 
This comes at the cost that an interpretation of the process in terms of an ancestral structure is not obvious.
Moreover, the analysis in \cite{cordhumvech2022} could only be conducted under specific integrability conditions on the model-parametrizing measures, which excluded several commonly studied special cases like the Wright-Fisher diffusion.

When bidirectional mutation is introduced into the model, permanent extinction and fixation is no longer possible.
The focus then typically shifts to existence and characterization of a stationary distribution, and to establishing convergence rates. Once again, moment duality turns out to be a powerful tool to answer these questions, see e.g. \cite{CorderoVechambre}, \cite{BEH21}, and \cite{GKT}. 
In the present work we consider individual and coordinated forms of mutation (coordination as in \cite{GKT}). 
In addition, we allow for coordinated bidirectional selection and for frequency-dependent selection to be of general polynomial form. 
This setting precludes, in general, a moment duality.

The model components capturing neutral reproduction, frequency-dependent selection and individual mutation are well-established.
Selection coordinated by an environment can be used to model strong, instantaneous selection events.
For example, it has been observed that lizards with long fingers experience significant positive selection during hurricanes that impact their habitat~\cite{donihue_hurricane_induced_2018};
and selection coordinated by an environment (in this case, the occurrence of a hurricane) can be used to model the frequency of short- and long-fingered lizards, as noted in~\cite[Rem. 3.3]{casa19e}. 
In our context, long-fingered lizards can also be modeled as experiencing negative selection under normal conditions (a particular instance of bidirectional selection).
A biological motivation for coordinated mutation is the modeling of systems with two phenotypes that exhibit adaptive switching in response to environmental changes~\cite{mitchell_adaptive_2009}.
For example, in brewer's yeast, genes have been observed to be activated during heat shocks to protect against oxidative stresses, which often follows in the wine production process~\cite{bleuven_molecular_2016}. 
This type of switching, here coordinated by heat shocks, can be modeled as coordinated mutation.

The purpose of this paper is threefold. 
First, to extend the notion of Bernstein duality from \cite{Cordero2022} to a broader class of $\Lambda$-Wright--Fisher processes that incorporate frequency-dependent and coordinated (bidirectional) selection, as well as both individual and coordinated mutation. To achieve this, we employ a combinatorial approach to establish the duality, 
which we believe simplifies the proof and makes it more intuitive than a direct computation.

Second, we utilize Bernstein duality to characterize the limiting behavior of these processes in the absence of mutation, under the assumption that neutral reproduction is sufficiently strong. This closes a gap in the boundary classification in \cite{cordhumvech2022}.

Third, we apply Bernstein duality in the presence of mutation to characterize the long-term behavior of the forward process. In particular, we focus on the ergodic properties of the model. The corresponding results are largely novel in this generality.

In \cite{Cordero2022}, \cite{Koske24} and in this manuscript, Bernstein duality is used to analyze population models.
In this work, we also aim to clarify the original concept to facilitate its use.
For instance, we have adapted the proofs from \cite{Cordero2022} to our setting and have simplified proofs for some results where possible. 
Additionally, to ensure a self-contained presentation, we have included a dedicated section on the motivation and biological interpretation of the processes involved.

The paper is organized as follows. In Section \ref{S2} we introduce the class of $\Lambda$-Wright--Fisher processes we consider in the present paper.
Our main results are stated in Section~\ref{sec:statementmainresults}, in particular the Bernstein duality relating the $\Lambda$-Wright--Fisher process and the so-called Bernstein coefficient process, and the results on the long term behavior of the $\Lambda$-Wright--Fisher process. 
Section~\ref{sec:discussionprevresults} discusses previous results and explains the novelty of the current work. In Section \ref{sec:Bernstein} we introduce the finite population (individual-based) counterpart of our model to provide intuition on the dynamics. This provides insight on the ancestral structure of the process and allows to formally derive the Bernstein coefficient process. In Section~\ref{sec:lcp} we establish some properties of the Bernstein coefficient process and prove the Bernstein duality. In Section~\ref{sec:proofmainres} we prove the remaining  main results.
\subsection{Notation}\label{sec:prequel}
The positive and non-negative integers are denoted by $\N$ and $\N_0\coloneqq\N\cup\{0\}$, respectively. The non-negative real numbers are denoted by~$\R_+$. For $n\in \N$, we define $[n]\coloneqq\{1,\ldots,n\}$, $[n]_0\coloneqq[n]\cup\{0\}$ and $(n]\coloneqq[n]\setminus\{1\}.$
For any Borel set $S\subset\Rb$, $\Ms_f(S)$ is the set of finite measures on $S$.\\
For $n\in \N_0$ and $x\in [0,1]$, we write $B\sim \bindist{n}{x}$ if $B$ is a binomial random variable with parameters $n$ and $x$, i.e. $\P(B=i)= { \tiny \binom{n}{i}} x^i (1-x)^{n-i}$ for $i\in [n]_0$. We write $B\sim \berno{x}$ if $B$ is a Bernoulli random variable with parameters $x$, i.e. $B\sim \bindist{1}{x}$. 
For $n,k,j\in \N_0$ with $n\geq k\vee j$, we write $K\sim \hypdist{n}{k}{j}$ if~$K$ is a hypergeometric random variable with parameters $n,k$, and $j$, i.e. $\P(K=i)= {\tiny \binom{k}{i} \binom{n-k}{j-i}/\binom{n}{j}}$, $i\in \{ 0\vee (j+k-n),\ldots, k\wedge j\}.$
For $n\in \N$, $\ell\in[n]$ and $i\in[n+\ell]$, we define the distribution $\mathrm{HP}(n+\ell,\ell,i)$ via the following \emph{hypergeometric pairing}. Consider an urn containing $n+\ell$ balls with $i$ of them being red and $n+\ell-i$ being blue. If $\ell$ pairs of balls are formed uniformly at random; the other $n-\ell$ balls are kept unpaired, then $R\sim \mathrm{HP}(n+\ell,\ell,i)$ denotes the number of groups (pairs and singletons) containing at least one red ball. 

\section{Model and main results}\label{S2}
The main focus of this study is to investigate the long-term behavior of an infinite, two-type population undergoing random reproduction, frequency-dependent selection, mutation (individual and coordinated) and coordinated selective reproduction (triggered by random fluctuations of an environment).
Let $a$ and $A$ denote the two types of individuals in the population, corresponding, for example, to specific alleles.
We write $X_t$ for the proportion of type-$a$ individuals present at time $t$, and we assume $X\coloneqq (X_t)_{t\geq 0}$ evolves according to the stochastic differential equation (SDE)
\begin{align}
	\dd  X_t \  = &\  \sqrt{\Lambda(\{0\})X_t(1-X_t)}\, \dd W_t+  \int_{(0,1]\times [0,1]}z\big(\mathds{1}_{\{u\leq X_{t-}\}}(1-X_{t-})-\mathds{1}_{\{u>X_{t-}\}} X_{t-} \big)\tilde{N}(\dd t, \dd z,\dd u)\nonumber\\
	&+\left( d_{{\sel}}(X_{t})+\theta_a (1-X_t)-\theta_A X_t\right)\dd t + \int_{(-1,1)} \lvert z\rvert (\mathds{1}_{\{z\geq 0\}}(1-X_{t-})-\mathds{1}_{\{z< 0\}}X_{t-}) M(\dd t, \dd z) \nonumber\\
	&+ \int_{(-1,1)} zX_{t-}(1-X_{t-})S(\dd t, \dd z). \label{eq:SDEWFP}
\end{align}
where the building blocks, here grouped according to their biological interpretation, are:

\noindent (a) \emph{neutral reproduction} driven by a standard Brownian motion $W\coloneqq (W_t)_{t\geq 0}$ and a compensated Poisson measure $\tilde{N}(\dd t,\dd z,\dd u)$ on $[0,\infty)\times(0,1]\times[0,1]$ with intensity $\dd t\times z^{-2}\Lambda(\dd z)\times \dd u$, where $\Lambda\in\Ms_f([0,1])$,	 

\noindent (b) \emph{frequency-dependent selection} contributing to the the drift term via the polynomial $d_{\vec{s}}$ given by
\begin{equation*}
	d_{\sel}(x)\coloneqq\sum_{\ell=2}^{\kappa}\beta_\ell\sum_{i=0}^\ell \binom{\ell}{i}x^i(1-x)^{\ell-i}\Big(p_{i}^{(\ell)}-\frac{i}{\ell}\Big),\quad x\in[0,1],
\end{equation*}
where $\kappa\in\Nb$ with $\kappa>1$, $\beta=(\beta_\ell)_{\ell\in(\kappa]}\in \R_{+}^{\kappa-1}$, and $p\coloneqq (p_i^{(\ell)}:\ell\in (\kappa], i\in[\ell]_0)$ with $p^{(\ell)}_i\in[0,1]$ and $p_0^{(\ell)}=0=1-p_\ell^{(\ell)}$; these parameters are summarized in the vector $\sel\coloneqq (\kappa,\beta,p)$. 
Any polynomial of the form $x(1-x)\sigma(x)$, with a polynomial $\sigma$, can be expressed in this way~\cite{Cordero2022}. In particular, this covers several classical forms of selection, see Section \ref{sec:discussionprevresults}. 

\noindent (c) \emph{coordinated selective reproductions} driven by a Poisson measure $S(\dd t, \dd z)$ on $[0,\infty)\times(-1,1)$ with intensity $\dd t \times \lvert z\rvert^{-1}\mu(\dd z)$, where $\mu\in\Ms_f([-1,1])$ with $\mu(\{0\})=0$. In line with previous literature, we refer to~$S$ also as the environment for coordinated selection.

\noindent (d) \emph{individual mutation} rates $\vec{\theta}\coloneqq(\theta_a,\theta_A)\in \Rb_+^2$, contributing to the drift term, 

\noindent (e) \emph{coordinated mutation} driven by a Poisson measure $M(\dd t, \dd z)$ on $[0,\infty)\times(-1,1)$ with intensity $\dd t \times \lvert z\rvert^{-1} \nu(\dd z)$, where $\nu\in\Ms_f([-1,1])$ with $\nu(\{0\})=0$.

We assume that $W$, $\tilde{N}$, $M$ and $S$ are independent.
The underlying processes at the individual-based level are explained in Section~\ref{sec:Bernstein}. 
The existence and pathwise uniqueness of strong solutions of SDE~\eqref{eq:SDEWFP} can be shown analogously to \cite[Lemma 3.2]{Cordero2022} (see also \cite[Prop. A3]{cordhumvech2022}). We refer to $X$ as the $\Lambda$-Wright--Fisher process with frequency-dependent selection $\sel$, individual and coordinated mutation {$(\vec{\theta},\nu)$} and coordinated selection $\mu$. We denote by $\P_x$ the probability measure associated with $X$ under the initial condition $X_0=x\in[0,1]$, and by $\E_x$ the associated expectation. 

In this work, we provide a unifying framework for the study of the long-term behavior for a general class of $\Lambda$-Wright--Fisher processes using Bernstein duality. In particular, we advocate for using Bernstein duality in cases where Siegmund duality has not been fruitful to analyze the long-term behavior of~\eqref{eq:SDEWFP}. 
To this end, we complement the results of~\cite{cordhumvech2022} derived via Siegmund duality by considering a parameter regime that covers the cases when the assumptions in \cite{cordhumvech2022} are not satisfied. 
Some of these cases have been already treated in~\cite{foucart2013impact,GS18, CorderoVechambre, casa19e} using an approach based on the existence of a moment dual, which excludes cases where
\begin{equation}
	\left(\mu(0,1)\vee\max_{x\in[0,1]}d_{\vec s}(x)\right)\left(\mu(-1,0)\vee\max_{x\in[0,1]}-d_{\vec s}(x)\right)>0\qquad \textit{(bidirectional selection)}. \label{eq:bisel}
\end{equation}
Thus, our results fill the gap in the literature for processes that were not amenable to study by moment duality and that do not satisfy the assumptions of \cite{cordhumvech2022}.

More concretely, our main results are establishing: 
i) Bernstein duality between~$X$ and a functional of a branching-coalescing structure (reminiscent of an ancestral selection graph), which is new for processes with coordinated selection and mutation, and ii) the asymptotic properties of $X$ under the assumption
\begin{equation}\label{Assump}
	\sum_{\ell=2}^\kappa \beta_\ell (\ell-1)+\mu(-1,1)<\int_{[0,1]}|\log(1-z)|\frac{\Lambda(\dd z)}{z^2}+\nu(-1,1)+\theta_a+\theta_A.
\end{equation}

This includes ii-a) a representation for the absorption probability of~$X$ in~$0$ and~$1$ if 
\begin{equation}
	\vec{\theta}=0 \textit{ and } \nu=0 \qquad \textit{(no mutation)}\label{eq:nomut}
\end{equation}
and ii-b) the ergodicity of~$X$ and a representation for the moments of its stationary distribution if
\begin{equation}
	\theta_a+\nu(0,1)>0 \textit{ and } \theta_A+\nu(-1,0)>0\qquad \textit{(bidirectional mutation)}. \label{eq:bimut}
\end{equation}
We discuss how these results compare to the existing literature in Section \ref{sec:discussionprevresults}.

\subsection{Statement of main results}\label{sec:statementmainresults}
To state the results, we first need to introduce the dual process. Its transitions are expressed in terms of finite-dimensional linear operators $C^{n,k}$, $D^{n,l}$, $M_c^{n,k}$, and $S_c^{n,\ell}$, $n\in \N$, $k\in(n]$, $l\in(\kappa]$, $\ell\in[n]$, $c\in\{a,A\}$. While the technical definition of these operators is important, we believe that discussing it in detail at this stage of the manuscript may not be particularly enlightening, and hence we postpone it to Definition~\ref{def:bramutoperators}. Starting from any vector $v\in \cup_{n\in\N_0}\R^{n+1} \eqqcolon\R^\infty$, the states visited by the dual process will be obtained by successive (compatible) compositions of the above mentioned operators and $v$ (see Definition \ref{def:BCP} below). It is convenient for us to think of the dual process as a Markov process on a discrete state space. For this, we introduce some countable sets that will be invariant to its dynamics.
For $n \in \N$ and $v\in \R^{n+1}$, we let $\mathbb{M}_{V}$ (resp. $\mathbb{M}^0_{V}$) be the set of operators from $\R^{n+1}$ to $\R^k$, for some $k \in \N$, that can be obtained as the composition of a finite number of (compatible) operators of the form $C^{\cdot,\cdot}$, $D^{\cdot,\cdot}$, $M_{\cdot}^{\cdot,\cdot}$, and $S_{\cdot}^{\cdot,\cdot}$ (resp. $C^{\cdot,\cdot}$, $D^{\cdot,\cdot}$, and $S_{\cdot}^{\cdot,\cdot}$), including the identity operator of $\R^{n+1}$ corresponding to an empty composition. Let $C_V(v), C^0_V(v) \subset \R^\infty$ be defined by 
\begin{equation}
	C_V(v)\coloneqq\left\{M v\in \R^{\infty}: \ M\in \mathbb{M}_{V}\right\}, \qquad C^0_V(v)\coloneqq\left\{M v\in \R^{\infty}: \ M\in \mathbb{M}^0_{V}\right\}. \label{defcvv}
\end{equation}
The set $C_V(v)$ (resp. $C_V^0(v)$) is by definition invariant by composition of the operators $C^{\cdot,\cdot}$, $D^{\cdot,\cdot}$, $M_{\cdot}^{\cdot,\cdot}$, and $S_{\cdot}^{\cdot,\cdot}$ (resp. $C^{\cdot,\cdot}$, $D^{\cdot,\cdot}$, and $S_{\cdot}^{\cdot,\cdot}$). Since they are countable we equip them with the discrete topology and consider convergence in distribution relative to that topology. 

\begin{defi}[Bernstein coefficient process]\label{def:BCP}
	
	Let $\xi\in\Rb^\infty$ and $v\in C_V(\xi)$. 
	The {$(\Lambda,\mu,\vec{\theta},\nu,\sel)$-Bernstein coefficient process} starting at $v$ is the $C_V(\xi)$-valued continuous-time Markov chain $V\coloneqq(V_r)_{r\geq 0}$ such that $V_0=v$ and that transitions occur from $w\in C_V(\xi) \cap \R^{n+1}$ to:
	\begin{itemize}
		\item $C^{n,k}w\in \R^{n-k+2}$, $k\in (n]$, at rate ${\tiny \binom{n}{k}}\lambda_{n,k}$, where 
		\begin{equation}
			\lambda_{n,k}\coloneqq \int_{[0,1]} z^{k-2}(1-z)^{n-k}\Lambda(\dd z), \label{defrateneutjump}
		\end{equation}
		with the convention that $0^0=1$ (in particular $\lambda_{n,2}=\Lambda(\{0\})+\int_{(0,1]} (1-z)^{n-2}\Lambda(\dd z)$).
		\item $D^{n,l}w\in \R^{n+l}$, $l\in (\kappa]$, at rate $n\beta_l$.
		\item $M^{n,\ell}_c w\in \R^{n-\ell+1}$, $\ell\in[n]$ and $c\in\{a,A\}$, at rate ${\tiny \binom{n}{\ell}}m_{n,\ell}^c+\1_{\{\ell=1\}}n\theta_c$, where 
		\begin{equation}
			m^a_{n,\ell}=\int_{(0,1)} z^{\ell-1}(1-z)^{n-\ell}\nu( \dd z)\quad \textrm{and}\quad m^A_{n,\ell}=\int_{(-1,0)} \lvert z\rvert^{\ell-1}(1-\lvert z\rvert )^{n-\ell}\nu( \dd z).\label{defrateposjumpmut}
		\end{equation}
		\item $S^{n,\ell}_cw \in \R^{n+\ell+1}$, $\ell\in[n]$ and $c\in\{a,A\}$, at rate ${\tiny \binom{n}{\ell}}\sigma_{n,\ell}^c$, where 
		\begin{equation}
			\sigma^a_{n,\ell}=\int_{(0,1)} z^{\ell-1}(1-z)^{n-\ell}\mu( \dd z)\quad \textrm{and}\quad \sigma^A_{n,\ell}=\int_{(-1,0)} \lvert z\rvert^{\ell-1}(1-\lvert z\rvert)^{n-\ell}\mu(\dd z).\label{defrateposjumpenv}
		\end{equation}
	\end{itemize}
	Define $L_r\coloneqq\dim(V_r)-1$,
	such that we can write $V_r=(V_r(0),\ldots,V_r(L_r))$.	
	Denote by $\P_{v}$ the probability measure associated with $V$ under the initial condition $V_0=v$, 
	and by $\E_{v}$ the associated expectation. 
\end{defi}

We refer to the process $L\coloneqq (L_r)_{r\geq 0}$ as the \emph{line-counting process}, because its transitions are reminiscent of the line-counting process of an ancestral selection graph; this connection will be made precise in Section~\ref{sec:asg}. 
The fact that the processes $L$ and $V$ are well-defined at all times (i.e. that they are \textit{conservative}) follows from Lemmas \ref{non-exp} and  \ref{condq}, respectively. Note that if $V_0 =v\in C_V(\xi)$, then $V_t\in C_V(v)\subset C_V(\xi)$ for all $t\geq 0$, and hence one can further reduce the state of space to $C_V(v)$.

The duality exposed in the following theorem formalises the connection between the $\Lambda$-Wright--Fisher process and the Bernstein coefficient process and generalises \cite[Thm. 2.14]{Cordero2022} to our setting. This result does not require~\eqref{Assump} to hold and is proved in Section~\ref{sec:lcp}.
\begin{theo}[Bernstein duality]\label{thm:Bernsteinduality}
	Let $X$ be the solution of~\eqref{eq:SDEWFP} and let $V$ be the Bernstein coefficient process. For any $x\in[0,1]$ and $v=(v_i)_{i=0}^n\in\R^{n+1}$ we have, for~$t\geq0$,
	$$\E_x\left[\sum_{i=0}^n v_i \,\binom{n}{i} X_t^i(1-X_t)^{n-i}\right]=\E_v\left[\sum_{i=0}^{L_t} V_t(i)\,\binom{L_t}{i} x^i(1-x)^{L_t-i}\right].$$ 
\end{theo}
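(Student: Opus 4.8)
The plan is to establish the duality relation by the standard generator method for Markov process duality, exploiting the bilinear structure of the duality function. Define the duality function $H:[0,1]\times\R^\infty\to\R$ by
\[
H(x,v)\coloneqq\sum_{i=0}^n v_i\,\binom{n}{i}x^i(1-x)^{n-i},\qquad v=(v_i)_{i=0}^n\in\R^{n+1}.
\]
The goal is then to verify that the generator $\mathcal{A}_X$ of the $\Lambda$-Wright--Fisher process $X$ acting on the $x$-variable agrees with the generator $\mathcal{A}_V$ of the Bernstein coefficient process $V$ acting on the $v$-variable, in the sense that $\mathcal{A}_X H(\cdot,v)(x)=\mathcal{A}_V H(x,\cdot)(v)$ for all $x\in[0,1]$ and all $v\in\R^\infty$. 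Once this \emph{generator duality} is in place, the integrated duality follows from a by-now routine argument (e.g. \cite[Prop. 1.2]{JansenKurt2014}-type reasoning), provided one checks the integrability and conservativeness conditions guaranteed by Lemmas \ref{non-exp} and \ref{condq}.

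First I would decompose both generators according to the mechanisms of the model: neutral reproduction (the Brownian and the $\tilde N$ terms), selection ($d_{(\kappa,\beta,p)}$), individual and coordinated mutation ($\vec\theta$ and $\nu$), and environmental branching ($\mu$ and $S$). Because $H$ is bilinear and both generators act linearly on the respective variables, it suffices to match each mechanism \emph{term by term}. For the neutral and selective parts the identity is inherited from \cite{Cordero2022}, so the genuinely new work concerns mutations and the environment. The strategy here is combinatorial rather than computational: I would use the \emph{sampling interpretation} of the Bernstein basis. If each of $n$ leaves is independently coloured type $a$ with probability $x$, then $H(x,v)=\E[v_{S_n}]$ where $S_n\sim\bindist{n}{x}$ counts the type-$a$ leaves; this is the representation already implicit in the definition of $P_t$. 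The effect of each graphical event on the \emph{forward} type composition has been identified in Section~\ref{sec:BCP} (e.g. an $a$-mutation of size $\ell$ sends $i$ type-$a$ leaves to $i+\ell$, matching $M^{n,\ell}_a$; an environmental $a$-branching sends $i$ to $R^a_{n,\ell,i}\sim\mathrm{HP}(n+\ell,\ell,i)$, matching $S^{n,\ell}_a$). Translating these descriptions into the statement ``the operator on $v$ reproduces the $x$-side effect'' is precisely the assertion that the forward-time type-propagation rules on the diagram $\Ds$ are the adjoint of the backward-time ASG transitions under the sampling pairing.

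Concretely, for each mechanism I would verify an identity of the form $H(\Phi(x),v)=H(x,\Theta v)$, where $\Phi$ is the infinitesimal forward effect on the frequency $x$ and $\Theta\in\{C^{n,k},D^{n,\ell},M^{n,\ell}_c,S^{n,\ell}_c\}$ is the corresponding operator on coefficients; summing these identities against the matching rates $\binom{n}{\ell}m^c_{n,\ell}$, $\binom{n}{\ell}\sigma^c_{n,\ell}$, etc. reconstructs the full generators. The environmental terms are where I expect the combinatorics to be most delicate: the binary-merger (pairing) structure means the relevant transformation of the binomial sampling is governed by the hypergeometric-pairing distribution $\mathrm{HP}(n+\ell,\ell,i)$, and one must check that resampling $\ell$ disjoint pairs out of $n+\ell$ lines and declaring a pair type-$a$ iff it contains at least one type-$a$ line (for the $a$-environment) exactly transforms a $\bindist{n+\ell}{x}$ sample into the law needed to match the forward increment $rX_{t-}(1-X_{t-})$ in \eqref{eq:SDEWFP}. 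This amounts to a generating-function or direct moment computation relating $\mathrm{HP}$ to the binomial, and it is the main obstacle; the individual-mutation and coordinated-mutation matchings are comparatively direct because $M^{n,k}_a$ and $M^{n,k}_A$ are simple index shifts.

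The remaining step is analytic: to pass from the equality of generators to the equality of expectations, one applies the duality lemma after confirming that $t\mapsto H(X_t,v)$ and $t\mapsto H(x,V_t)$ are sufficiently integrable. Since $H(x,v)$ is a convex combination of the $v_i$ weighted by binomial probabilities, $|H(x,v)|\le\max_i|v_i|$, and the conservativeness of $V$ (no explosion, by Lemma~\ref{condq}) together with the boundedness of $X$ in $[0,1]$ gives the required domination uniformly on compact time intervals. I would therefore close the argument by invoking the standard semigroup/Dynkin comparison, concluding that both sides solve the same linear evolution equation with the same initial datum $H(x,v)$ at $t=0$, whence they coincide for all $t\ge0$.
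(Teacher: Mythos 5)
Your proposal is correct and follows essentially the same route as the paper's proof: a mechanism-by-mechanism generator duality $\Ac H(\cdot,w)(x)=\Bc H(x,\cdot)(w)$, in which the neutral and selective terms are inherited from \cite{Cordero2022}, the mutation and environmental terms are matched via the binomial-sampling interpretation of $H$ together with the hypergeometric-pairing combinatorics (the paper packages exactly this into Lemmas \ref{computedistrib} and \ref{computedistribmut}), and the integrated duality is then obtained from a Liggett-type theorem using $|H(x,w)|\le\lVert v\rVert_\infty$ and the conservativeness guaranteed by Lemmas \ref{non-exp} and \ref{condq}. The only caveat is that your per-operator identity $H(\Phi(x),v)=H(x,\Theta v)$ should be read at the level of the rate-weighted mixture over jump sizes $\ell$ (as your summing step indicates), since for a fixed $\ell$ no deterministic $\Phi$ realizes it.
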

Note that the underlying \textit{duality function} is $H:[0,1]\times \R^{\infty}\to\Rb$ defined via 
\begin{align}
	H(x,w):=\sum_{i=0}^{\dim(w)-1}w_i \binom{\dim(w)-1}{i} x^i (1-x)^{\dim(w)-1-i}. \label{defh}
\end{align}
Next, we describe the long-term behavior of~$V$ and $X$ under the absence of mutation, that is, under~\eqref{eq:nomut}.
\begin{prop}[Long-term behavior of $V$ without mutation]\label{prop:asympV}
	Assume \eqref{eq:nomut} holds. Then, $(V_{r}(0))_{r\geq 0}$ and $(V_{r}(L_r))_{r\geq 0}$ are both constant. 
	Moreover, if in addition, \eqref{Assump} is satisfied, then the following holds.
	\begin{enumerate}  
		\item For every $a,b\in \R$, the Bernstein coefficient process $V$ on $C^0_V((a,b)^T)$ has a unique invariant probability measure $\mu^{a,b}$ with 
		support $C^0_V((a,b)^T) \subset \{w\in \Rb^{\infty} : \ w_0=a,\, w_{\dim(w)-1}=b \}$. 
		\item Let $V_{\infty}^{a,b}$ be a random variable with law~$\mu^{a,b}$. If $V_0=v \in \Rb^{\infty}$ with $v_0=a$ and $v_{\dim(v)-1}=b$, then $C^0_V((a,b)^T) \subset C^0_V(v)$ and $V_r$ converges in distribution to $V_{\infty}^{a,b}$ as $r\to\infty$.
	\end{enumerate}
\end{prop}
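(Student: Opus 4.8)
The plan is to first verify the elementary claim that $(V_t(0))_{t\geq0}$ and $(V_t(L_t))_{t\geq0}$ are constant by inspecting the action of each admissible transition on the first and last entries of a vector. With $\vec\theta=0$ and $\nu=0$ the only transitions are the coalescence, selective-branching and environmental-branching operators of Definition~\ref{def:bramutoperators}. For the zeroth entry one checks directly that $(C^{n,k}v)_0=v_0$ (the factor $i/(n-k+1)$ vanishes at $i=0$), that $(D^{n,\ell}v)_0=v_0$ (since $K_0\equiv 0$ and $p_0^{(\ell)}=0$), and that $(S^{n,\ell}_a v)_0=(S^{n,\ell}_A v)_0=v_0$ (the hypergeometric pairing with no red ball returns $0$). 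Symmetrically, evaluating the same operators at their top index and using $p_\ell^{(\ell)}=1$ and $\mathrm{HP}(n+\ell,\ell,n+\ell)=n$ shows the last entry equals $v_n$ throughout. Hence both boundary coefficients are preserved by every jump, so they are constant in time; in particular, within $C_V(a,b)$ the only state of dimension $2$ is $(a,b)^T$.

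\textbf{Step 2: positive recurrence of the line-counting process.} The heart of the argument is to show that, under \eqref{Assump} with $\nu=0$ and $\vec\theta=0$, the line-counting process $L=(\dim(V_t)-1)_{t\geq0}$ is positive recurrent on $\N$. I would use a Foster--Lyapunov criterion with the test function $f(n)=\log n$. Applying the generator of $L$ to $f$ and letting $n\to\infty$, the coalescent part contributes $-\int_{[0,1]}|\log(1-r)|\,r^{-2}\Lambda(\dd r)$ (a large reproduction of parameter $r$ merges an asymptotic fraction $r$ of the lines, changing $\log L$ by $\log(1-r)$), the selective branchings contribute $\sum_{\ell=2}^\kappa\beta_\ell(\ell-1)$, and the environmental branchings contribute $\int_{(-1,1)}|r|^{-1}\log(1+|r|)\,\mu(\dd r)\le\mu(-1,1)$. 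Thus the asymptotic drift of $f$ is bounded above by
\[\sum_{\ell=2}^\kappa\beta_\ell(\ell-1)+\mu(-1,1)-\int_{[0,1]}|\log(1-r)|\frac{\Lambda(\dd r)}{r^2},\]
which is strictly negative precisely under \eqref{Assump}. Hence $\mathcal{A}f(n)\le-\varepsilon<0$ for all $n$ outside a finite set; combined with non-explosion (Lemma~\ref{non-exp}) this yields positive recurrence of $L$. Making this drift estimate rigorous---justifying the interchange of limit and generator and controlling the contribution of atypically large jumps near $r=0$---is the main obstacle.

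\textbf{Step 3: reduction to an irreducible positive-recurrent chain and conclusion.} Restricting $V$ to the countable, forward-invariant set $C_V(a,b)$, Step 1 shows that $\{L=1\}$ coincides with the singleton $\{(a,b)^T\}$. Since the dimension process of $V$ is the line-counting chain $L$, Step 2 gives that $(a,b)^T$ is visited infinitely often with finite expected return time, i.e. it is positive recurrent for $V$. Moreover every state of $C_V(a,b)$ is reachable from $(a,b)^T$ by definition and, conversely, reaches $(a,b)^T$ because $L$ hits $1$ almost surely; hence $C_V(a,b)$ is a single positive-recurrent communicating class. The standard ergodic theorem for non-explosive, irreducible, positive-recurrent continuous-time Markov chains then provides a unique stationary distribution $\mu^{a,b}$ on $C_V(a,b)$ together with $V_t\xrightarrow{(d)}V_\infty^{a,b}$ in total variation, hence in distribution for the discrete topology; the support statement in (1) is immediate from Step 1 since $C_V(a,b)\subseteq\{v:\,v_0=a,\,v_{\dim(v)-1}=b\}$. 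Finally, for an arbitrary initial vector $v$ with $v_0=a$ and $v_{\dim(v)-1}=b$ (not necessarily in $C_V(a,b)$), the process still hits $(a,b)^T$ in finite time and thereafter evolves in $C_V(a,b)$, so the limit is unaffected and (2) follows.
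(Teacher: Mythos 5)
Your Steps 1 and 3 are correct and essentially coincide with the paper's own argument: Step 1 is exactly Lemma \ref{lem:actionV}, and Step 3 (restricting to $C_V(a,b)$, identifying $\{L_t=1\}$ with visits to the single state $(a,b)^T$, irreducibility of the restricted chain, the ergodic theorem for countable-state chains \cite[Thm. 3.5.2, 3.5.3, 3.6.2]{norris1998markov}, and the extension to arbitrary initial states with $v_0=a$, $v_{\dim(v)-1}=b$ via entrance into $C_V(a,b)$ in finite time) is precisely the proof given in Section \ref{sec:propB}.

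The genuine gap is Step 2, and you name it yourself: making the drift estimate rigorous is ``the main obstacle''. Positive recurrence of $L$ under \eqref{Assump} is the analytic heart of the proposition---it is the only place where \eqref{Assump} enters---and the paper devotes two separate results to it (Lemmas \ref{lem:generatorinequality} and \ref{lc-pos_abs}, adapting \cite{foucart2013impact}); a proposal that stops where yours does has not proved the statement. Concretely, three things are missing. (i) The drift bound: for the coalescence term the interchange of $n\to\infty$ with integration against $r^{-2}\Lambda(\dd r)$ must be justified; this is in fact the easy part, since all coalescence increments of $\log$ are nonpositive, so Fatou's lemma applied to their negatives gives $\limsup_n$ of the integral $\le -c(\Lambda)$ even when $c(\Lambda)=\infty$, while the selective and environmental bounds $b(\beta)$ and $\mu(-1,1)$ hold for \emph{every} $n$ via $\log(1+x)\le x$, with no limit needed. (ii) The passage from the pointwise drift inequality to finite expected hitting times of a finite set $[n_0]_0$: since $\log n$ is unbounded, Dynkin's formula cannot be applied directly, and one needs a localization argument using non-explosion (Lemma \ref{non-exp}) and the fact that jumps from $[0,N]$ land in $[0,2N\vee(N+\kappa)]$; this is the content of the paper's inequality \eqref{recu}. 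Note that the paper's Lyapunov function $f(\ell)=\sum_{k=2}^{\ell}\frac{k}{\delta(k)}\log\big(\tfrac{k}{k-1}\big)$ is essentially a normalized version of your $\log n$ (asymptotically $\log(n)/c(\Lambda)$), chosen so that these technicalities can be handled cleanly. (iii) The bootstrap from ``finite expected return time to $[n_0]_0$'' to ``the state $1$ is positive recurrent'', which a generic citation of a Foster--Lyapunov criterion does not deliver for the specific state $1$: the paper does this by a geometric-trials argument---from any state in $[n_0]_0$ there is probability at least $p>0$ of reaching $1$ by successive coalescences before leaving $[n_0]_0$, and each excursion out of $[n_0]_0$ has finite expected duration. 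So your route is viable and is morally the paper's route, but as written it defers exactly the step that carries all the difficulty.
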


This result is proved analogously to \cite[Prop. 2.24]{Cordero2022}, which deals with the situation without mutation and coordinated selection. 
However, since the proof is short and informative, we provide it in Section~\ref{sec:propB} for the sake of completeness. 

\begin{theo}[Fixation/Extinction for $X$ without mutation]\label{fixext}
	Assume \eqref{Assump} and \eqref{eq:nomut} hold. 
	Let $X_0=x$ for some $x\in(0,1)$. Then $X_t$ converges almost surely to a $\{0,1\}$-valued random variable $X_\infty$.
	Moreover,
	\begin{align}
		\P_x(X_\infty=1)=\E[H(x,V^{0,1}_{\infty})]\in(0,1), \label{exprfivproba}
	\end{align}
	where the random variable $V^{0,1}_{\infty}$ is as in Proposition \ref{prop:asympV}. 
\end{theo}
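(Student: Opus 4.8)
<br>

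The plan is to combine the convergence of the dual process from Proposition~\ref{prop:asympV} with the Bernstein duality of Theorem~\ref{thm:Bernsteinduality} to transfer information from the (tractable) dual back to the forward process $X$. First I would establish that $X_t$ converges almost surely to a $\{0,1\}$-valued limit. Since $\vec{\theta}=0$ and $\nu=0$, the drift and jump terms in~\eqref{eq:SDEWFP} coming from mutation vanish, and one checks that $X$ is a bounded $[0,1]$-valued semimartingale. The key structural fact is that in the absence of mutations the boundary points $0$ and $1$ are absorbing for $X$ (type $a$ cannot reappear once extinct, and similarly for $A$). The standard route is to exhibit a bounded function $f$ with $f(X_t)$ a bounded supermartingale (or to argue that $X$ is a bounded semimartingale whose quadratic variation accumulates only away from the boundary), forcing almost-sure convergence of $X_t$ to some $X_\infty$; the fact that neutral reproduction is active (so $\Lambda\neq 0$) then rules out convergence to an interior point, pinning $X_\infty\in\{0,1\}$. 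One can also read this off the duality directly by testing against suitable polynomials, which I would use as a cross-check.

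Next I would identify the fixation probability. Applying Theorem~\ref{thm:Bernsteinduality} with the initial dual state $v=(0,1)^T\in\R^2$ gives, for every $t\geq0$,
\begin{align*}
\E_x\!\left[H(X_t,(0,1)^T)\right]=\E_{(0,1)^T}\!\left[H(x,V_t)\right].
\end{align*}
The left-hand functional is $H(x',(0,1)^T)=0\cdot(1-x')+1\cdot x'=x'$, so the left side is simply $\E_x[X_t]$. Letting $t\to\infty$, bounded convergence together with $X_t\to X_\infty\in\{0,1\}$ a.s. gives $\E_x[X_t]\to\P_x(X_\infty=1)$. For the right-hand side I would invoke Proposition~\ref{prop:asympV}: starting from $V_0=(0,1)^T$ (so $a=0$, $b=1$), the dual converges in distribution to $V_\infty^{0,1}$ on the countable state space $C_V(0,1)$. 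Since $H(x,\cdot)$ is bounded on this trajectory—here I would use that each Bernstein coefficient stays in a controlled range, e.g. $V_t(i)\in[0,1]$ by the probabilistic interpretation of $V$, so $|H(x,V_t)|\leq1$—bounded convergence yields $\E_{(0,1)^T}[H(x,V_t)]\to\E[H(x,V_\infty^{0,1})]$. Equating the two limits gives the identity~\eqref{exprfivproba}.

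It remains to show that the fixation probability lies strictly in $(0,1)$. For this I would argue that $\E[H(x,V_\infty^{0,1})]$ is strictly between $0$ and $1$ for $x\in(0,1)$. The lower bound follows because the dual started from $(0,1)^T$ can, with positive probability, coalesce down to a single line before any selective branching disrupts it, landing in a state of the form $(0,1)^T$ of dimension two whose contribution to $H(x,\cdot)$ is exactly $x>0$; more carefully, positive recurrence of $L$ under~\eqref{Assump} guarantees the invariant measure $\mu^{0,1}$ charges states with strictly positive $H(x,\cdot)$-value. The upper bound follows symmetrically by a complementary argument (equivalently, applying the duality with $v=(1,0)^T$ and using $\P_x(X_\infty=1)=1-\P_x(X_\infty=0)$).

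The main obstacle I anticipate is the interchange of limit and expectation on the dual side, i.e. justifying $\E_{(0,1)^T}[H(x,V_t)]\to\E[H(x,V_\infty^{0,1})]$. The subtlety is that $C_V(0,1)$ contains states of unbounded dimension, and convergence in distribution on a countable discrete space does not automatically commute with the evaluation of an unbounded-in-dimension functional unless one controls the Bernstein coefficients uniformly. The clean way around this is the a priori bound $V_t(i)\in[0,1]$ (so that $0\le H(x,V_t)\le 1$ pointwise), which comes from the probabilistic meaning of $V_t(i)$ as a conditional type-$a$ probability; with that uniform bound the functional $w\mapsto H(x,w)$ is bounded on the whole state space and bounded convergence applies directly. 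Establishing and invoking this uniform boundedness—rather than the convergence itself—is where the real care is needed, and I would foreground it as the crux of the argument.
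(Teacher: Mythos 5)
There is a genuine gap, and it sits exactly where you placed the least weight: the almost-sure convergence of $X_t$ to a $\{0,1\}$-valued limit. You treat this as a ``standard'' structural fact (bounded semimartingale, quadratic variation accumulating only away from the boundary, $\Lambda\neq0$ ruling out interior limits), but none of this constitutes a proof: a bounded $[0,1]$-valued semimartingale need not converge at all, and you never exhibit the bounded (super)martingale whose existence you appeal to. Worse, your sketch for this step uses only $\Lambda\neq0$ and never invokes \eqref{Assump}; such an argument would prove too much, since by \cite[Thm. 2.1]{cordhumvech2022} there are parameter choices with $\Lambda\neq0$ but \eqref{Assump} violated for which coexistence occurs, i.e. $X_t$ does \emph{not} converge to a $\{0,1\}$-valued limit. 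Any correct argument must use \eqref{Assump} in this very step, and the paper does so through the dual: Lemma \ref{lemf} first shows that $f(x)\coloneqq\E[H(x,V^{0,1}_{\infty})]$ is continuous with $f(0)=0$, $f(1)=1$ and $f(x)\in(0,1)$ on $(0,1)$ (positive recurrence of $L$, i.e. \eqref{Assump}, enters here); then, combining Theorem \ref{thm:Bernsteinduality} with the Markov properties of $X$ and $V$, one shows that $(f(X_t))_{t\geq0}$ is a bounded \emph{martingale} --- this is precisely the function you postulate but do not construct. Doob's theorem gives a.s. convergence of $f(X_t)$; the moment duality $\E_x[X_t^k]=\E_{e_k}[H(x,V_t)]\to f(x)$ for \emph{every} $k$ (not only $k=1$, which is all your argument extracts) identifies the limit law of $X_t$ as $\bindist{1}{f(x)}$; and finally the properties of $f$ are used to upgrade a.s. convergence of $f(X_t)$ to a.s. convergence of $X_t$ itself. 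In your scheme the logical order is reversed (a.s. convergence first, then one moment to identify the limit), and the first step is exactly what cannot be obtained for free.

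The remaining parts of your proposal are sound and match the paper: the identification $\P_x(X_\infty=1)=\E[H(x,V^{0,1}_{\infty})]$ via the dual started at $e_1=(0,1)^T$, the uniform bound $|H(x,V_t)|\leq1$ justifying the exchange of limit and expectation (the paper obtains it from \eqref{int1} and Lemma \ref{lem:actionV}, i.e. from the operator norm bounds, rather than from the probabilistic interpretation of $V_t(i)$, but both routes are legitimate), and the strict inequalities $0<\E[H(x,V^{0,1}_{\infty})]<1$ via $\P(V^{0,1}_{\infty}=e_1)>0$, which is how Lemma \ref{lemf} argues. But as written these parts all rest on the unproved first step, so the proposal does not yet establish the theorem.
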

This result is proved in Section~\ref{sec:prooffixation}. Finally, we describe the long-term behavior of~$V$ and $X$ in the presence of bidirectional mutation, that is, under~\eqref{eq:bimut}.

\begin{prop}[Absorption of $V$ under bidirectional mutation]\label{prop:absV}
	Assume \eqref{Assump} and~\eqref{eq:bimut} hold. Then, for any $v\in \R^\infty$, $\P_{v}$-almost surely $V$ is absorbed at a (random) state $V_\infty\in C_V(v) \cap \Rb^1$ in finite time.	
\end{prop}
The proposition is proved in Section~\ref{sec:propB}. The following theorem, which is proved in Section~\ref{sec:proofstationary}, shows that then the process $X$ has a unique stationary distribution, which can be expressed via the absorption probabilities of the process $V$. We denote by $U_\infty$ the unique entry of the vector $V_\infty\in\Rb^1$ from Proposition \ref{prop:absV}, i.e. $V_\infty=(U_\infty)$. 

\begin{theo}[Ergodicity of $X$ under bidirectional mutation]\label{thstadist}
	Assume \eqref{Assump} and~\eqref{eq:bimut} hold. Then, for any $x \in [0,1]$, $\P_x(X_t \underset{t \rightarrow \infty}{\longrightarrow} 1)=\P_x(X_t \underset{t \rightarrow \infty}{\longrightarrow} 0)=0$ and $X$ has a unique stationary distribution $\mathcal{L}$ that is characterized by its moments
	\begin{align}
		\forall n \in \N_0, \quad\rho_n := \int_{[0,1]} x^n \mathcal{L}(\dd x)=\E_{e_n}[U_\infty], \label{momentsloiinvar}
	\end{align}
	where $e_n$ denotes the $(n+1)$th unit vector in $\R^{n+1}$ and $U_{\infty}$ is as above.
	Moreover, for any initial distribution in $[0,1]$, the law of $X_t$ converges weakly to $\mathcal{L}$ as $t\to\infty$. 
\end{theo}

The following proposition, proved in Section~\ref{sec:proofstationary}, provides the recursive relations satisfied by the moments $\rho_n$ of the stationary distribution $\mathcal{L}$ from Theorem \ref{thstadist}. 
\begin{prop} [Moment recursions]\label{recursionmoments}
	Assume \eqref{Assump} and~\eqref{eq:bimut} hold. 
	Define for any $n \geq 1$,
	\begin{align*}
		\alpha_n \coloneqq & \frac{n(n-1)}{2} \Lambda(\{0\}) + \int_{(0,1]} (1-(1-r)^{n} -nr(1-r)^{n-1})r^{-2}\Lambda(\dd r) \\
		+ & \int_{(-1,1)} (1-(1-|r|)^{n}) |r|^{-1}(\mu+\nu)( \dd r) + n \bigg( \theta + \sum_{\ell=2}^{\kappa}\beta_\ell \bigg). 
	\end{align*}
	Moreover, set for $k \in [n-1]_0$, 
	\begin{align*}
		\alpha_{n,k} \coloneqq {\binom{n}{k}} \left ( m^a_{n,n-k} + \theta_a \mathds{1}_{k=n-1} \right ) + \mathds{1}_{k \geq 1} {\binom{n}{k-1}} \lambda_{n,n-k+1}, 
	\end{align*} 
	and for $k \in \{n,\dots,(n+\kappa-1)\vee2n\}$, 
	
	\begin{align*}
		\alpha_{n,k} \coloneqq& \mathds{1}_{k \leq n+\kappa-1} \bigg ( \sum_{\ell=2 \vee(k+1-n)}^{\kappa}n \beta_\ell \sum_{i=n}^{k} \frac{(-1)^{k-i} p_{i+1-n}^{(\ell)} l!}{(n+l-1-k)!(k-i)!(i+1-n)! } \bigg) \\
		&+  \mathds{1}_{k \leq 2n} \Bigg( \mathds{1}_{k \geq n+1} \binom{n}{k-n} \sigma_{n,k-n}^A + \sum_{\ell=1\vee(k-n)}^{n} \binom{n}{\ell} \sigma_{n,\ell}^a \sum_{i=n}^{k} \frac{(-1)^{k-i} 2^{n+\ell-i} l!}{(n+l-k)!(k-i)!(i-n)! } \Bigg). 
	\end{align*} 
	Then we have
	\begin{align}
		\alpha_n \rho_n & = \sum_{k=0}^{(n+\kappa-1)\vee2n} \alpha_{n,k} \rho_k. \label{linrelmoments}
	\end{align}
\end{prop}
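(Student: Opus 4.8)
The plan is to derive the linear relation \eqref{linrelmoments} directly from the Bernstein duality of Theorem \ref{thm:Bernsteinduality} by choosing the test vector $v=e_n$ and examining the instantaneous generator action on both sides. Since $\mathcal{L}$ is stationary, $\int H(x,e_n)\,\mathcal{L}(\dd x)=\rho_n$ is time-invariant, so applying the generator $\mathcal{A}_X$ of $X$ to the monomial moment and integrating against $\mathcal{L}$ must vanish. The cleanest route is therefore to compute $\mathcal{A}_X x^n$ from the SDE \eqref{eq:SDEWFP} via It\^o's formula, take expectations under $\mathcal{L}$, and set the result to zero; each term of the generator will contribute either to the diagonal coefficient $\alpha_n$ (the total rate at which the $n$-dual leaves the configuration of dimension $n+1$ with label fixed) or to an off-diagonal $\alpha_{n,k}$ (the rate of moving to a configuration contributing $\rho_k$).

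Concretely, first I would apply It\^o to $f(x)=x^n$: the Brownian term gives $\tfrac{n(n-1)}{2}\Lambda(\{0\})(x^{n-1}(1-x)) $-type contributions, the compensated jump measure $\tilde N$ contributes the $\int_{(0,1]}(1-(1-r)^n-nr(1-r)^{n-1})r^{-2}\Lambda(\dd r)$ factor after the binomial expansion of $(x+r(1-x))^n$ and $(x-rx)^n$, the drift $d_{(\kappa,\beta,p)}+\theta_a(1-x)-\theta_A x$ contributes the selection and individual-mutation pieces, and the two coordinated measures $M,S$ contribute the $\mu,\nu$ terms. Collecting the coefficient of $\rho_n=\int x^n\,\mathcal{L}(\dd x)$ on the left gives precisely $\alpha_n$, while expanding the lower-order monomials produced by each jump in the Bernstein (binomial) basis and reading off the coefficient of $x^k$ gives the $\alpha_{n,k}$. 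The appearance of the hypergeometric pairing in the environmental terms $\sigma^a_{n,\ell},\sigma^A_{n,\ell}$ is what produces the alternating sums with the $2^{n+\ell-i}$ weights in the second displayed block, and the selective branching produces the $p^{(\ell)}_{i+1-n}$ weighted alternating sums in the first block.

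An equivalent and perhaps more transparent derivation, which I would favor because it aligns with the dual philosophy of the paper, is to use the generator $\mathcal{A}_V$ of the Bernstein coefficient process of Definition \ref{def:BCP} applied at $v=e_n$, combined with \eqref{momentsloiinvar}. By Proposition \ref{prop:absV} the dual is absorbed almost surely, so $\E_{e_n}[V_\infty]$ is a harmonic function for $V$ in the sense that $\mathcal{A}_V(v\mapsto\E_v[V_\infty])=0$. Writing out $\mathcal{A}_V e_n$ using the four transition types (coalescence $C^{n,k}$, selective branching $D^{n,\ell}$, mutation $M^{n,\ell}_c$, environmental branching $S^{n,\ell}_c$) and using $\E_{C^{n,k}e_n}[V_\infty]$, etc., expressed through the moments $\rho_j=\E_{e_j}[V_\infty]$, should reproduce \eqref{linrelmoments} once one identifies how each operator maps the unit vector $e_n$ into a linear combination of unit vectors $e_j$ of the appropriate dimensions. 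The total outgoing rate from $e_n$ is exactly $\alpha_n$, which is why it multiplies $\rho_n$ on the left.

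The main obstacle will be the bookkeeping in the environmental and selective terms: one must carefully track how the hypergeometric-pairing law $\mathrm{HP}(n+\ell,\ell,i)$ and the hypergeometric law $\mathrm{Hyp}(n+\ell-1,i,\ell)$, acting on a single Bernstein basis vector, redistribute mass across the new basis, and then invert the change from the Bernstein (binomial) basis back to the monomial basis to extract the coefficient of $x^k$. This inversion is what generates the alternating binomial sums with factors $(-1)^{k-i}$ and the combinatorial weights $2^{n+\ell-i}$; getting the index ranges ($k\in\{n,\dots,(n+\kappa-1)\vee 2n\}$, and the lower summation limits $2\vee(k+1-n)$, $1\vee(k-n)$) exactly right, including the indicator constraints $\mathds{1}_{k\leq n+\kappa-1}$ and $\mathds{1}_{k\leq 2n}$ that reflect the maximal dimension increase under a single branching event, is the delicate part. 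I would verify a couple of small cases ($n=1$, $n=2$) by hand to pin down signs and normalizations before asserting the general formula.
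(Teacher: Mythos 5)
Your second (favored) derivation is exactly the paper's proof: the paper performs a first-step decomposition of $\E_{e_n}[V_\infty]$ (equivalently, uses harmonicity of $v\mapsto\E_v[V_\infty]$ for the absorbed chain $V$), notes that the total exit rate from a state of dimension $n+1$ is $\alpha_n$, computes $C^{n,\ell}e_n$, $M^{n,\ell}_c e_n$, $S^{n,\ell}_A e_n$ as unit vectors and $S^{n,\ell}_a e_n$, $D^{n,\ell}e_n$ via the ASG/hypergeometric-pairing combinatorics you correctly flag as the delicate step, and then converts back to moments through the alternating-sign Bernstein-to-monomial formula \eqref{eveqmom}. Your first route (applying $\Ac$ to $x^n$ and integrating against the stationary law) would produce the identical linear relation, since by the generator duality \eqref{eq:generatorrelation} the integrand $\Ac H(\cdot,e_n)(x)$ coincides pointwise with $\Bc H(x,\cdot)(e_n)$, so both plans are sound.
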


\begin{remark}[On the role of Condition \eqref{Assump}]\label{oncondition}
	It was shown in \cite{Cordero2022} that any polynomial vanishing at $0$ and $1$ can be expressed as $d_{\sel}$, 
	and that there are then infinitely many choices of $\sel$. 
	Thus, Condition \eqref{Assump} is not completely determined by the jump-diffusion $X$, but depends on the choice of $\sel$; some of them may satisfy \eqref{Assump} and some of them may not. 
	For the purposes of this paper, the choice of $\sel$ and the optimality of Condition~\eqref{Assump} are not crucial, because if~\eqref{Assump} is not satisfied for a given choice of $\sel$, then Condition~\eqref{condla} (in Section~\ref{sec:discussionprevresults}) is automatically satisfied, and thus the long-term behavior of $X$ can be deduced from the results in~\cite{cordhumvech2022}. This also means that the same process $X$ can have multiple Bernstein duals (one for each choice of $\sel$); see~\cite{Cordero2022} for a discussion on minimal Bernstein duals of a given process $X$. 
	We will see in Lemma~\ref{lc-pos_abs} that Condition \eqref{Assump} implies the positive recurrence (resp. absorption at $0$) of the process~$L$ in the case without (resp. with) mutation. Depending on the choice of $\sel$, the process $L$ may or may not be positive recurrent (resp. absorbed at $0$); if $L$ is null-recurrent or transient, then \eqref{Assump} does not hold, and as mentioned above, the behavior of $X$ follows from the results in \cite{cordhumvech2022}.
	
	Similarly, since the line-counting process $L$ does not depend on the choice of the parameter $p$, its positive recurrence yields the long-term behavior of infinitely many $\Lambda$-Wright-Fisher processes (sharing all parameters but~$p$).
	
\end{remark}

\subsection{Discussion of related results}\label{sec:discussionprevresults}
In this section, we briefly discuss how our findings relate to the existing literature. To this end, three specific forms of selective drift will be discussed.

\textit{Genic (negative) selection}: 
It is probably the most classic way to incorporate selection into the Wright-Fisher process~\cite{Kimura1962}.
It can be recovered by setting $\kappa=2$, $p_1^{(2)}=0$, $\beta_2>0$ and
$$d_{\sel}(x)=-sx(1-x),\quad x\in[0,1].$$
Here, type $a$ plays the role of unfit or deleterious type.

\textit{Fittest-type win selection}: Discussed, for example, in~\cite{GS18}, this type of selection is recovered by fixing $\kappa\geq 2$, and for $l\in(\kappa]$ and $i\in[l]$, setting $\beta_l\geq 0$  and $p_i^{(l)}=\delta_{i,l}$. This yields
$$d_{\sel}(x)=-x(1-x)\sum_{i=0}^{\kappa-2}\left(\sum_{l=i+2}^\kappa \beta_l\right)x^i ,\quad x\in[0,1]$$ 
Here type $a$ plays the role of an unfit or deleterious type.

\textit{Balancing selection}: The type that is positively selected depends on the current type composition (e.g.~\cite{Neuhauser1999}). It corresponds to the case $\kappa=3$, $\beta_2=0$ , $\beta_3=s>0 $, $p_1^{(3)}=1$ and $p_2^{(3)}=0$, hence
$$d_{\sel}(x)=2sx(1-x)(1-2x),\quad x\in[0,1].$$  

We conclude the discussion with a remark on the form of \textit{selection coordinated by an environment}. 
The last summand in~\eqref{eq:SDEWFP} corresponds to coordinated selection, it may be interpreted as strong genic selection during infinitesimally short periods of time.
These events translate to simultaneous binary branchings in the dual process. 
More complex forms of coordinated selection are conceivable, such as frequency-dependent coordinated selection, which we expect to result in a backward process with more general branching structure. 
One reason we did not explore this further is that the formal dual process becomes more involved, which obscures the underlying construction. 
We refer to~\cite{casa19e} for a form of coordinated selection that features a more general branching structure in the backward process, but with the requirement that the same type is favored in all selection events.

\subsubsection{Previous results in absence of mutation} \label{sec:discussnomutation}

To date, moment duality has played a crucial role in the derivation of fixation/extinction probabilities for specific cases of unidirectional selection. For example, the case of genic selection has been treated in~\cite{foucart2013impact}, and was extended to fittest-type win selection in~\cite{GS18} (all assuming $\mu=0$). Coordinated selection that is always favorable to type-$A$ individuals was incorporated in~\cite{casa19e} for fittest-type win selection (see also~\cite{CorderoVechambre} for the case of genic selection with $\Lambda(\dd z)=2\delta_0(\dd z)$). 
Among other findings, these works established that fixation and extinction of the type-$a$ subpopulation occurs with positive probability if and only if
\begin{equation}\label{assu-2}
	\sum_{\ell=2}^\kappa \beta_\ell (\ell-1)+\int_{(-1,0)}\log(1-z)\frac{\mu(\dd z)}{\lvert z\rvert}<\int_{[0,1]}|\log(1-z)|\frac{\Lambda(\dd z)}{z^2}.
\end{equation}
However, in the case of bidirectional selection, i.e. under \eqref{eq:bisel} (for example, under balancing selection),  
moment duality is not expected to hold (not for $X$ nor for $1-X$), 
and hence other approaches were needed. An alternative approach based on combinatorial properties of the underlying ancestral structures was proposed in \cite{Vec2023} in the setting of coordinated bidirectional selection and genic selection (restricted to  $\Lambda=\delta_{{0}}$ and $\int_{(-1,1)}|z|^{-1}\mu(\dd z)<\infty$).
Bernstein duality, which contains moment duality as a special case, was introduced in~\cite{Cordero2022} to treat frequency-dependent (bidirectional) selection (however, assuming $\mu=0$). 
It was shown that under~\eqref{assu-2}, extinction and fixation occur with positive probability. 
In particular, Theorem~\ref{fixext} coincides with \cite[Prop. 2.27]{Cordero2022}.

In the absence of mutation, \eqref{eq:nomut}, a classification of the possible boundary behavior of $X$ was derived in~\cite{cordhumvech2022} using Siegmund duality: almost sure fixation, almost sure extinction, fixation and extinction with positive probabilities and coexistence.
The advantage of Siegmund duality is that it holds under very mild conditions (monotonicity with respect to the initial value). In particular, the analysis in~\cite{cordhumvech2022} covers all the evolutionary forces considered in the present paper (for the part without mutation), but relies on the assumption
\begin{equation}\label{condla}
	\Lambda(\{0,1\})=0\quad\textrm{and}\quad \int_{(0,1)}\frac{\Lambda(\dd z)}{z}<\infty.
\end{equation}
The condition can be thought of as ensuring that 1) the weight of small neutral reproductions is not too strong and that 
2) no single neutral reproduction replaces the entire population. 
To see how Theorem~\ref{fixext} fits into the setting of \cite{cordhumvech2022}, let us set $\sigma$ as the polynomial satisfying $x(1-x)\sigma(x)=d_{\sel}(x)$ for all $x\in[0,1]$; 
$\sigma$ is well-defined, because $d_{\sel}$ is a polynomial vanishing at $0$ and $1$. 
According to~\cite[Thm. 2.1(2)]{cordhumvech2022} $X_t$ converges a.s. to a $\{0,1\}$-valued random variable $X_\infty$ with $\P_x(X_\infty=1)\in(0,1)$ 
if
\begin{align}
	\sigma(0)+\int_{(-1,1)}\log(1+z)\frac{\mu(\dd z)}{|z|}&<\int_{[0,1]}|\log(1-z)|\frac{\Lambda(\dd z)}{z^2} \label{c0},\\
	-\sigma(1)+\int_{(-1,1)}\log(1-z)\frac{\mu(\dd z)}{|z|}&<\int_{[0,1]}|\log(1-z)|\frac{\Lambda(\dd z)}{z^2}\label{c1}.
\end{align}
Note that if \eqref{condla} is not satisfied, then
\eqref{Assump} is trivially satisfied (the right-hand side then is~$\infty$). 
Therefore, Theorem~\ref{fixext} implies that the statement of \cite[Thm. 2.1(2)]{cordhumvech2022} also holds when~\eqref{condla} is not satisfied. 
This closes the gap in the boundary classification \cite[Thm. 2.1]{cordhumvech2022}.

The Siegmund and Bernstein duality approaches are successful under contrasting conditions. 
The former is effective when neutral reproduction is weak, while the latter is suitable when it is strong.

\subsubsection{Previous results that consider mutation}\label{sec:discussmutation}

The convergence of $X$ towards its unique stationary distribution is known for some subsets of model parameters if $\theta_a,\theta_A>0$ and $\nu=0$. 
For the classical Wright--Fisher diffusion with bidirectional mutation and frequency-dependent selection, which corresponds to $\Lambda(\dd  z)=2\delta_0(\dd z)$, $\mu=0$,  $\nu=0$ and $\theta_a,\theta_A>0$,
the asymptotic behavior can be derived via diffusion theory, see e.g. \cite{Ta07}. 
An alternative derivation builds on the concept of moment duality, see~\cite{BW18} for the case of genic selection  and~\cite{BEH21} for the case of fittest-type win selection. 

This approach has the advantage of being more flexible at incorporating jumps, while providing a representation of the moments of the stationary distribution in terms of the absorption probabilities of a suitable Markov chain; the disadvantage of the method is that it imposes rather strong restrictions on some of the parameters (for example, it excludes the case of bidirectional selection). 
Two important contributions in this direction are \cite{CM19}, which considers genic selection, large neutral offspring events (i.e. $\Lambda\in\Ms_f([0,1])$) and no coordinated selection ($\mu=0$), and \cite{CorderoVechambre}, which, in the context of genic selection, assumes that $\Lambda(\dd  z)=2\delta_0(\dd z)$ and that coordinated selection is triggered by an environment that is always favorable to type $A$ individuals (i.e. $\mu$ is supported in $(-1,0)$). All these works assume that $\nu=0$, which means that they do not incorporate coordinated mutation. 
However, it is known from \cite{GKT} that moment duality still holds in the presence of coordinated mutation, and hence, one could expect that this can be used in the standard way to derive the asymptotic behavior of $X$. 
Since the classical proof of moment duality operates additively on the individual components of the generator, it seems reasonable to posit that moment duality holds, subject only to the constraints that $\mu$ is supported on the interval $(-1,0)$ and that selection is of the fittest-type win. Under condition \eqref{Assump}, Theorem~\ref{fixext} extends the above mentioned results to cover also bidirectional selection (i.e. cases where \eqref{eq:bisel} holds).

\section{A genealogical perspective on the Bernstein coefficient process}\label{sec:Bernstein}

The purpose of this section is twofold. 
On the one hand, we aim to provide intuition about the dynamics of  $X$ by describing an individual-based dynamics that approximates the model.
On the other hand, we hope to outline a framework for constructing the Bernstein dual process based on this individual-based dynamic, enabling its application to other dynamics.
Except Definitions~\ref{defl} and~\ref{def:bramutoperators}, the section can be considered optional as it only aims to provides intuition and is not needed for the proofs.

\subsection{A Moran model counterpart and the ancestral picture}\label{sec:moran}
It is well-known that SDEs of the form \eqref{eq:SDEWFP} arise as large population limits of constant size Wright--Fisher and Moran models. 
We now describe a Moran model suitable for our context, and explain how one can derive the corresponding ancestral structure,
which forms the basis for the genealogical perspective to the Bernstein coefficient process.

\subsubsection{The Moran model}
Consider a population of constant size $K$ consisting of two types of individuals: $a$ and $A$. Let $X_t^{(K)}$ denote the fraction of type-$a$ individuals present in the population at time $t$. 
The population reproduces neutrally, selectively or in a coordinated way (triggered by an environment), and is subject to mutation, which can occur individually or in a coordinated manner. 
On the basis of the parameters and the Poisson measures appearing in \eqref{eq:SDEWFP}, the parameters of the Moran model dynamic can be chosen to be
${\vec{\theta}}^{(K)}\coloneqq {\vec{\theta}}/K$, $\beta^{(K)}\coloneqq \beta/K$, $\tilde{N}^{(K)}([0,t] \times \cdot \times \cdot) \coloneqq \tilde{N} ([0,t/K] \times \cdot \times \cdot)$, $M^{(K)}([0,t] \times \cdot) \coloneqq M([0,t/K] \times \cdot)$ and $S^{(K)}([0,t] \times \cdot) \coloneqq S([0,t/K] \times \cdot)$ (however, this choice aims at simplicity, but is not the only way to code the model). 
The population then evolves via the following mechanisms (with the labeling in line with the labeling of the building blocks of~\eqref{eq:SDEWFP}).

\medskip

\noindent (a)-i \emph{Single neutral offspring}: each individual gives birth independently at rate $\Lambda(\{0\})/2$, producing one offspring that inherits the parental type and replaces a randomly chosen individual in the population.\\
(a)-ii \emph{Large neutral offspring}: at points $(t,z,u)$ of $\tilde{N}^{(K)}$ {each individual is killed, independently of others, with probability $z$} and is replaced by the offspring of one individual in the population, which is chosen uniformly at random among the type~$a$ (resp.~$A$) individuals if $X_{t-}^{(K)}\leq u$ (resp. $X_{t-}^{(K)}>u$).\\
(b) \emph{Single selective offspring} is the result of $\ell$\emph{-interactions}, $\ell\in(\kappa]$. This means that each individual is independently selected at rate $\beta_\ell^{(K)}$ to gather a group of $\ell$ randomly chosen individuals (with the selected individual being part of the~$\ell$-group). 
If the group consists of $i$ type-$a$ individuals and $\ell-i$ type-$A$ individuals, one individual is chosen uniformly at random from the type $a$ (resp. $A$) population with probability $p_{i}^{(\ell)}$ (resp. $1-p_i^{(\ell)}$) to produce one single offspring that inherits the parental type and replaces the founder of the group. \\
(c) \emph{Coordinated selective offspring} is produced whenever there is a jump of the environment, which is encoded by ${S}^{(K)}$.
More precisely, at the points $(t,z)$ of ${S}^{(K)}$ with $z>0$ (resp. $z<0$) each individual of type $a$ (resp. $A$) independently produces with probability $\lvert z \rvert$ a single offspring.
This offspring inherits the parental type, and the resulting group of newborns replaces a uniformly chosen group of the same size from the population. \\
(d) \emph{Individual mutation} affects each individual independently at constant rate. Specifically,  each individual mutates to type $a$ (resp. $A$) at rate $\theta_a^{(K)}$ (resp. $\theta_A^{(K)}$).\\ 
(e) \emph{Coordinated mutation} is driven by the measure $M^{(K)}$: at the points $(t,z)$ of ${M}^{(K)}$ with $z>0$ (resp. $z<0$) each individual independently of each other, mutates with probability $\lvert z\rvert$ to type $a$ (resp. $A$).

\smallskip

It can be proved that a time-rescaled Moran model $(X_{Kt}^{(K)})_{t\geq 0}$ converges in distribution as $K\to\infty$ to the solution {$(X,\P_x)$} of \eqref{eq:SDEWFP}, provided that the initial conditions $X_0^{(K)}$ converge to {$x\in[0,1]$}. 
This has been shown in several special cases of~\eqref{eq:SDEWFP} via convergence of the corresponding generators, see e.g. \cite{GS18} (for the case $p_{i}^{(\ell)}=\delta_{i,\ell}$, $\mu=\nu=0$, and {$\vec{\theta}=0$}), \cite{Cordero2022} (for the case $\mu=\nu=0$ and {$\vec{\theta}=0$}), \cite{CorderoVechambre} (for the case $\Lambda=\delta_{\{0\}}$, $\mu((-1,0))=0$, and $\nu=0$); see also \cite{BCM19} for a rather general convergence result for discrete-time Wright--Fisher models. 
Since our focus is on analyzing the limit process rather than deriving it,  we omit the formal argument.

\subsubsection{Graphical representation}
The evolution of the Moran model can be illustrated through a graphical representation, a common approach in the study of interacting particle systems. 
This representation reveals an embedded ancestral structure that is useful for identifying the Bernstein dual process. 
We will now clarify how this representation manifests in our specific context.

The graphical representation is the diagram $\Ds\coloneqq [0,\infty)\times [K]$ together with graphical elements randomly superposed in a type-agnostic manner on $\Ds$ coding the events driving the population's evolution.
We identify each individual in the population with a label in $[K]$; the individual with label~$i$ at time $t$ corresponds to the point $(t,i)\in\Ds$. For $i\in[K]$, we call $[0,\infty)\times \{i\}\subset \Ds$ the \emph{$i$th line}. 
An individual's lifetime is represented by the horizontal segment $[t_0,t_\dagger)\times{\{i\}}$,
where $i$ is its label, and~$t_0$ and~$t_\dagger$ denote its birth and death times, respectively.
The population evolves using the graphical elements explained in the following, 
where the categories should be compared also with the one in the previous section.

\smallskip

\noindent (a) \emph{Neutral offspring events} are depicted as vertical arrows with the parental line at the tail and the offspring line at the tip (multiple arrows sharing a tail in the case of large offspring). We call them neutral arrow(head)s.\\
(b) \emph{Single selective offspring} events are coded with a set of open-headed arrows, with the tips pointing to the individual to be replaced and the tails at the other potential parents, we refer to them as selective arrow(head)s. 
In particular, in an $\ell$-interaction, these are $\ell-1$ arrows.\\
(c) \emph{Coordinated selection} is depicted by arrows with a white (resp. black) diamond at the tail if the corresponding environment triggering the reproduction favors type $a$ (resp. $A$); we call them coordinated arrow(head)s. 
Specifically, arrows are placed on $\Ds$ when there is a jump in~$S^{(K)}$. 
For a point $(t,z)$ of~$S^{(K)}$ with $z>0$ (resp. $z<0$), 
choose $k\sim\bindist{K}{\lvert z\rvert}$ ordered pairs of lines, and draw an arrow from the first line to the second for each pair. 
The tails of the arrows are represented by white diamonds (resp. black diamonds). \\
(d)\&(e) \emph{Mutation} to type $a$ (resp. $A$) is triggered by the graphical element of a white (resp. black) \emph{circle}.

\smallskip

The graphical elements are superposed on~$\Ds$  according to the dynamics described in the previous section, but ignoring any information about the types of the individuals. See Fig.~\ref{fig:Diagram} for an illustration, where forward time runs from left to right.

\begin{figure}[t!]
	\centering
	\scalebox{0.65}{\begin{tikzpicture}
			
			\draw[dotted] (9.5,-1) -- (9.5,4.5);
			\draw[dotted] (8,-1) -- (8,4.5);
			\draw[dotted] (6,-1) -- (6,4.5);
			\draw[dotted] (4,-1) -- (4,4.5);
			\draw[dotted] (2,-1) -- (2,4.5);
			
			\draw[dotted] (1,-1) -- (1,4.5);
			\draw[dotted] (0,-1) -- (0,4.5);
			\draw[dotted] (-1,-1) -- (-1,4.5);
			\draw[dotted] (-2,-1) -- (-2,4.5);
			\draw[dotted] (-3,-1) -- (-3,4.5);
			\draw[dotted] (-4,-1) -- (-4,4.5);
			
			\node[below] at (9.5,5.2) {$0$};
			\node[below] at (8.75,5.2) {\color{red}$3$\color{black}};			\node[below] at (8,5.2) {$r_1$};
			\node[below] at (7,5.2) {\color{red}$4$\color{black}};			\node[below] at (6,5.2) {$r_2$};
			\node[below] at (5,5.2) {\color{red}$7$\color{black}};						\node[below] at (4,5.2) {$r_3$};
			\node[below] at (3,5.2) {\color{red}$6$\color{black}};			\node[below] at (2,5.2) {$r_4$};
			\node[below] at (1.5,5.2) {\color{red}$4$\color{black}};			\node[below] at (1,5.2) {$r_5$};
			\node[below] at (0.5,5.2) {\color{red}$2$\color{black}};			\node[below] at (0,5.2) {$r_6$};
			\node[below] at (-0.5,5.2) {\color{red}$4$\color{black}};						\node[below] at (-1,5.2) {$r_7$};
			\node[below] at (-1.5,5.2) {\color{red}$6$\color{black}};			\node[below] at (-2,5.2) {$r_8$};
			\node[below] at (-2.5,5.2) {\color{red}$9$\color{black}};						\node[below] at (-3,5.2) {$r_9$};
			\node[below] at (-3.5,5.2) {\color{red}$5$\color{black}};						\node[below] at (-4,5.2) {$T$};
			
			\node[below] at (9.5,-1.3) {$T$};
			\node[below] at (8,-1.3) {$t_9$};
			\node[below] at (6,-1.3) {$t_8$};
			\node[below] at (4,-1.3) {$t_7$};
			\node[below] at (2,-1.3) {$t_6$};
			\node[below] at (1,-1.3) {$t_5$};
			\node[below] at (0,-1.3) {$t_4$};
			\node[below] at (-1,-1.3) {$t_3$};
			\node[below] at (-2,-1.3) {$t_2$};
			\node[below] at (-3,-1.3) {$t_1$};
			\node[below] at (-4,-1.3) {$0$};
			
			\draw[opacity=0.2, line width=.5mm]  (9.5,4.5) -- (-4,4.5);
			\draw[opacity=0.2, line width=.5mm]  (9.5,4) -- (-4,4);
			\draw[opacity=0.2, line width=.5mm]  (9.5,3.5) -- (-4,3.5);
			\draw[opacity=0.2, line width=.5mm]  (9.5,3) -- (-4,3);
			\draw[opacity=0.2, line width=.5mm]  (9.5,2.5) -- (-4,2.5);
			\draw[opacity=0.2, line width=.5mm]  (9.5,2) -- (-4,2);
			\draw[opacity=0.2, line width=.5mm]  (9.5,1.5) -- (-4,1.5);
			\draw[opacity=0.2, line width=.5mm]  (9.5,1) -- (-4,1);
			\draw[opacity=0.2, line width=.5mm]  (9.5,0.5) -- (-4,0.5);
			\draw[opacity=0.2, line width=.5mm]  (9.5,0) -- (-4,0);
			\draw[opacity=0.2, line width=.5mm]  (9.5,-0.5) -- (-4,-0.5);
			\draw[opacity=0.2, line width=.5mm]  (9.5,-1) -- (-4,-1);
			
			\draw[color=red,opacity=1, line width=.5mm]  (9.5,3.5) -- (2,3.5);
			
			\draw[color=red,opacity=1, line width=.5mm] (8,3) -- (2,3);
			
			\draw[color=red,opacity=1, line width=.5mm] (6,4) -- (1,4);
			\draw[color=red,opacity=1, line width=.5mm] (6,2.5) -- (2,2.5);
			\draw[color=red,opacity=1, line width=.5mm]  (6,2) -- (-2.87,2);
			\draw[color=red, opacity=1, line width=.5mm] (9.5,1) -- (4,1) ;
			\draw[color=red,opacity=1, line width=.5mm] (4,0.5) -- (3.7,.5) ;
			\draw[color=red, opacity=1, line width=.5mm] (3.7,0.5) -- (1,.5) ;
			\draw[color=red,opacity=1, line width=.5mm]  (9.5,0) -- (4,0) ;
			\draw[color=red,opacity=1, line width=.5mm] (2,3) -- (-2.87,3);
			
			\draw[color=red,opacity=1, line width=.5mm] (0,3.5) -- (-4,3.5);
			\draw[color=red,opacity=1, line width=.5mm] (0,1.5) -- (-4,1.5);
			\draw[color=red,opacity=1, line width=.5mm] (-1,4) -- (-4,4);
			\draw[color=red,opacity=1, line width=.5mm] (-1,2.5) -- (-4,2.5);
			\draw[color=red,opacity=1, line width=.5mm] (-2,1) -- (-2.87,1);
			\draw[color=red,opacity=1, line width=.5mm] (-2,0.5) -- (-4,0.5);
			\draw[color=red,opacity=1, line width=.5mm] (-2,0) -- (-2.87,0);

			
			\draw[opacity=0.2, line width=.5mm, -{Stealth[length=2mm,width=2mm]}] (-1,0.5) -- (-1,1);
			\draw[opacity=0.2, line width=.5mm, -{Stealth[length=2mm,width=2mm]}] (-3.6,1) -- (-3.6,0);
			
			\draw[opacity=0.2, line width=.5mm, -{Stealth[length=2mm,width=2mm]}] (7.4,2.5) -- (7.4,2);
			\draw[opacity=0.2, line width=.5mm, -{Stealth[length=2mm,width=2mm]}] (7.4,2.5) .. controls (7.2,1.7) .. (7.4,1.5);
			\draw[opacity=0.2, line width=.5mm, -{Stealth[length=2mm,width=2mm]}] (7.4,2.5) .. controls (7.2,1) .. (7.4,-0.5);
			
			\draw[opacity=1, very thick, line width=.5mm, {Latex[length=3mm,width=3mm,open]}-,opacity=0.7] (8,3.5) -- (8,3);
			
			\draw[opacity=1, line width=.5mm, {Latex[length=3mm,width=3mm,open]}-, opacity=0.7] (6,3) -- (6,2.5);
			\draw[line width=.5mm, {Latex[length=3mm,width=3mm,open]}-, opacity=0.7] (6,3) .. controls (6.3,2.5) and (6.2,2.5) .. (6,2);
			\draw[line width=.5mm, {Latex[length=3mm,width=3mm,open]}-, opacity=0.7] (6,3) .. controls (6.1,3.5) and (6.1,3.5) .. (6,4);

			\draw[line width=.5mm, {Stealth[length=2mm,width=2mm]}-] (2,3.5) .. controls (1.8,3.2) ..(2,3);
			\draw[line width=.5mm, {Stealth[length=2mm,width=2mm]}-] (2,2.5) .. controls (1.8,2.7) ..(2,3);

			\draw[opacity=1, line width=.5mm, -{Stealth[length=2mm,width=2mm]}] (4,.5) .. controls (3.8,0.2) .. (4,0);
			\draw[opacity=1, line width=.5mm, {Stealth[length=2mm,width=2mm]}-] (4,1) .. controls (3.8,0.7) ..(4,0.5);
			
			\draw[line width=.5mm, {Stealth[length=2mm,width=2mm]}-] (0,3) -- (0,3.5);
			\draw[line width=.5mm, {Stealth[length=2mm,width=2mm]}-] (0,2) -- (0,1.5);
			\draw[line width=.5mm, {Stealth[length=2mm,width=2mm]}-] (-1,3.5) -- (-1,4);
			\draw[line width=.5mm, {Stealth[length=2mm,width=2mm]}-] (-1,2) -- (-1,2.5);
			
			\draw[line width=.5mm, -{Latex[length=3mm,width=3mm,open]},opacity=1] (-2,0) .. controls (-1.7,.6) and (-1.3,.9) .. (-2,1.5);	
			\draw[line width=.5mm, -{Latex[length=3mm,width=3mm,open]},opacity=1] (-2,0.5) .. controls (-1.8,1) and (-1.8,1) .. (-2,1.5);			
			\draw[line width=.5mm, -{Latex[length=3mm,width=3mm,open]},opacity=1] (-2,1) -- (-2,1.5);
			
			\draw[line width=.3mm, -{Latex[length=3mm,width=2mm,open]},opacity=0.2] (3.2,4.5) .. controls (3.5,3) and (3.5,2) .. (3.2,1.5);	
			\draw[line width=.3mm, -{Latex[length=3mm,width=2mm,open]},opacity=.2] (3.2,-0.5) .. controls (3.5,1) and (3.5,1) .. (3.2,1.5);	
			
			\draw[line width=.3mm, {Stealth[length=3mm,width=2mm]}-,opacity=1] (2,4.7) -- (3,4.7);
			\draw[line width=.3mm, -{Stealth[length=3mm,width=2mm]},opacity=1] (2,-1.2) -- (3,-1.2);
			
			\begin{scope}[every node/.style={diamond,fill,thick,draw,scale=0.5}]
				\node (F) at (-1,4) {};
				\node (G) at (-1,2.5) {};
				\node[opacity=0.3] (F10) at (-1,0.5) {};
			\end{scope}
			
			\begin{scope}[every node/.style={circle,fill,thick,draw,scale=0.6}]
				\node (A) at (1,4) {};
				\node (B) at (1,.5) {};
				\node[opacity=0.3] (F1) at (2.2,-0.5) {};
				\node[opacity=0.3] (F2) at (8.4,4.5) {};
				\node[opacity=0.3] (F3) at (8.4,3) {};
				\node[opacity=0.3] (F4) at (8.4,2.5) {};
				\node[opacity=0.3] (F5) at (8.4,2) {};
				\node[opacity=0.3] (F6) at (8.4,-1) {};
			\end{scope}
			
			\begin{scope}[every node/.style={diamond,fill=white, thick,draw,scale=0.5}]
				\node (C) at (0,3.5) {};
				\node (D) at (0,1.5) {};
			\end{scope}
			
			\begin{scope}[every node/.style={circle,thick,fill=white,scale=0.6}]
				\node[] at (5.3,1.5) {};	
				\node[] at (2.7,1) {};
				\node[] at (-0.3,0) {};			
			\end{scope} 
			
			\begin{scope}[every node/.style={circle,thick,fill=white,draw,scale=0.6}]
				
				\node (H) at (-3,3) {};
				\node (I) at (-3,2) {};
				\node (J) at (-3,0) {};
				\node (K) at (-3,1) {};
				\node[opacity=0.2] (F7) at (5.3,1.5) {};
				\node[opacity=0.2] (F8) at (2.7,1) {};
				\node[opacity=0.2] (F9) at (-0.3,0) {};
				
			\end{scope}
			
	\end{tikzpicture}}
	\caption{A graphical representation of the untyped Moran model (bold and semi-transparent elements) and the associated ASG (bold and red elements) to a sample of three individuals at forward time $t=T$. The red number between $r_i$ and $r_{i+1}$ represents the number of lines in the ASG in backward time interval $[r_i,r_{i+1})$.}
	\label{fig:Diagram}
\end{figure}

Once these graphical elements are known between times $0$ and $T$, we can allocate a type (that is, either~$a$ or~$A$) to each individual at time $t=0$, and propagate types forward in time along the lines of the diagram respecting the \emph{propagation rules}. The first such rule is that an individual can change its type only when its line encounters a circle or an arrowhead. The other rules are the following.

\smallskip

\noindent (a) The type of an individual after a neutral arrowhead is the type of the individual at the tail of the corresponding arrow.\\
(b) In an $\ell$-interaction involving exactly $i$ individuals of type $a$, the type of the individual after being hit by the $\ell-1$ arrows is $a$ with probability $p_i^{(\ell)}$ and~$A$ with probability $1-p_i^{(\ell)}$.\\
(c) The individual just after a coordinated arrowhead with a white (resp. black) diamond at the tail gets the type of the individual sitting at the tail if the latter has type $a$ (resp. $A$) and, otherwise, keeps its original type (i.e. the arrow is ignored). \\ 
(d)$\&$(e) The type of an individual after a white (resp. black) circle is $a$ (resp. $A$). 

\smallskip

We say that a line is of type~$a$ or~$A$ if the individual occupying that line is of that type. One can verify that there is a choice of rate parameters for the Poisson processes indicating the placement of the graphical elements such that the distribution of type~$a$ in the graphical representation is that of a Moran model.

\subsubsection{Moran model ancestral structure}
As mentioned before, the graphical construction relates the Moran model to an ancestral structure.
The latter is then a generalization of the ancestral selection graph (ASG) by Krone and Neuhauser \cite{KroNe97,NeKro97} 
(or more precisely of the killed-ASG \cite{BW18}). 
We now make precise how to obtain that structure.

The ancestral structure arises from picking a sample of individuals at time $T$ and tracing back the lines of relevant potential ancestors, those that can influence the types in the sample. 
It will be convenient to introduce a backward time, which is denoted by $r$ and satisfies $r = T-t$ for a forward time $t\in[0,T]$.
Starting from a sample of individuals,
arrows and circles in the diagram change the number of relevant potential ancestors and their respective locations backward in time as follows (it is again instructive to keep in mind the labeling of the previous section).

\smallskip

\noindent(a) If a group of lines encounters neutral arrowheads, we stop following them and we add to the ASG the line at the (common) tail of the arrows (i.e. a coalescence event takes place), since the line at the tail determines the types of all the lines at the tips. \\
(b) If a line at a given time encounters $\ell-1$ selective arrowheads, we add the lines at the corresponding tails to the ASG, because they are required to determine the type of the line at the arrowtips.\\
(c) If a group of lines encounters coordinated arrowheads, we augment the ASG by adding the lines at the diamond tails, as we need knowledge of their types to determine the parent (and thus they are relevant potential ancestors).\\
(d)\&(e) If a line encounters a circle, we stop following that line since its type is determined by the mutation, but we retain the circle at the end of the line for bookkeeping purposes.\\

Compare this also with Fig.~\ref{fig:Diagram} for an illustration. 
To determine the types in the sample, only the lines in the ASG at time $t=0$ ($r=T$) have to get assigned a type and this type can then propagate forward in time using the type propagation rules in the diagram $\Ds$ until time $t=T$ ($r=0$). 
\subsection{The asymptotic ancestral selection graph and the Bernstein dual}\label{sec:asg}
\subsubsection{Ancestral selection graph} 
The next step in the derivation of the dual process is to
associate an ASG to SDE \eqref{eq:SDEWFP}.
That object should be the limit as $K\to\infty$ of the ASG associated to the Moran model of size $K$. Making this rigorous would require a precise description of the ASG as a mathematical element of an appropriate topological space, which is beyond the scope of this paper. Instead, we derive the transitions of the limiting ASG in a heuristic way through a simple analysis of the rates of the Moran ASG as $K\to\infty$, after time-rescaling by $K$. This will suffice for our purposes, namely to motivate the definition of the Bernstein coefficient process (the formal relation to the forward evolution will be proved in Section \ref{sec:lcp}). 
Proceeding in this manner leads to the following branching-coalescing system.

Initially, there are~$n$ (ancestral) lines, each representing one individual in a sample taken at forward time~$T$.
The following transitions of the ancestral structure occur.

\smallskip

\noindent (a) \emph{Coalescences (neutral events).} For $n>1$ and $k\in (n]$ every group of $k$ lines among $n$ \emph{coalesces} into one at rate $\lambda_{n,k}$.\\
(b) \emph{Selective branching.} Every line independently splits into $\ell$ at rate~$\beta_\ell$, that is, there are $\ell-1$ additional lines.\\
(c) \emph{Coordinated branching.} For $c\in\{a,A\}$, and $\ell\in [n]$ every group of $\ell$ lines among $n$ is subject to a simultaneous $c$-branching at rate $\sigma^c_{n,\ell}$, i.e. each of the $\ell$ lines splits into two lines, one continuing and one incoming. The incoming lines start with a diamond; the diamond is white for $c=a$ and black for $c=A$. \\
(d) \emph{Single mutation.} For $c\in\{a,A\}$, every line, independently of each other, is subject to a type-$c$ mutation at rate $\theta_c$. The line ends in a circle; the circle is white for $c=a$ and black for $c=A$. \\
(e) \emph{Coordinated mutation.} For $c\in\{a,A\}$, and $\ell\in [n]$ every group of $\ell$ lines among $n$ is subject to a coordinated $c$-mutation at rate $m^c_{n,\ell}$.
Each of the $\ell$ lines ends in a circle; the circle is white for $c=a$ and black for $c=A$.\\

The branching-coalescing structure arising under the above dynamics will be referred to as \emph{Ancestral selection graph} (ASG).

Write $L_r$ for the number of lines present in the ASG at (backward) time $r$ (it will become clear later why the notation is as in Section~\ref{sec:statementmainresults} for $\dim{V_r}-1$). 
We refer to the process $L\coloneqq(L_r)_{r\geq 0}$ as the line-counting process of the ASG; it is a continuous-time Markov Chain on $\N_0$ with transitions from $n\in \N$ to

\smallskip

\noindent (a) $n-k+1$ for $k\in (n]$ at rate ${\tiny \binom{n}{k}}\lambda_{n,k}$ if $n>1$.\\
(b) $n+l-1$ for $l\in (\kappa]$  at rate $n\beta_l$.\\
(c) $n+\ell$ for $\ell\in [n]$ at rate ${\tiny \binom{n}{\ell}}\sigma_{n,\ell}$ with $\sigma_{n,\ell}\coloneqq\sigma_{n,\ell}^a+\sigma_{n,\ell}^A$. \\
(d)\&(e) $n-\ell$ for $\ell\in[n]$ at rate ${\tiny \binom{n}{\ell}}m_{n,\ell}+\1_{\{\ell=1\}}n\theta$ with
$m_{n,\ell}\coloneqq m_{n,\ell}^a+m_{n,\ell}^A$ and $\theta\coloneqq\theta_a+\theta_A. $

\smallskip

Note that if there is mutation, then $0$ is an absorbing state for $L$. 
If there is no mutation, the state $0$ is never reached if $L$ starts with at least one line, and hence, we can reduce the state space to $\Nb$.

Consider now an ASG that has evolved from time $r=0$ until (backward) time $r=T$. 
The individuals at time~$r=0$, from which lines start, are called \emph{sinks} and the lines at time~$r=T$ are called \emph{sources}. 
If we assign types to the sources of the ASG, the types propagate forward in time (from left to right) up to the sinks according to the same type propagation rules introduced for the graphical representation of the Moran model.

\subsubsection{Connecting the ancestral selection graph to the Bernstein coefficient process}

Consider the task of computing the probability of sampling~$n$ individuals of type~$a$ at forward time~$T$.
When conditioning on $X_T$, this probability is $X_T^n$.
Alternatively, given a realization of the ASG in~$[0,T]$ associated with the sample, we can determine the probability that its sinks are of type $a$, given that types are assigned to the sources based on the initial type composition $(x,1-x)$. 
Determining this probability requires the full information encoded in the ASG, 
which is a rather cumbersome object. 
Instead, we compute these probabilities, not for a given ASG, but averaging over all its possible realizations compatible with a given line-counting process. 
This idea generalizes the strategy developed in \cite{Cordero2022} to our setting and paves the way to the Bernstein duality.
As a start, it will be convenient to augment the line-counting process by the information on the type of event triggering its last jump; we will distinguish between neutral ($\blacktriangle$), selective ($\triangle$), $a$-coordinated selection ($\diamond$), $A$-coordinated selection ($\blackdiamond$), $a$-mutations ($\circ$) and $A$-mutations ($\bullet$); we will use the symbol $*$ to indicate that no-events have occurred yet. This motivates the following definition.

\begin{defi}[Labeled line-counting process]\label{defl}
	Consider the set of symbols $\Sigma\coloneqq\{*,\blacktriangle, \triangle,\diamond,\blackdiamond,\circ,\bullet\}$. The labeled line-counting process is the $\Nb_0\times\Sigma$-valued continuous-time Markov chain $(L_r,c_r)_{r\geq 0}$  transitioning from $(n,c)\in \Nb_0\times\Sigma$ to:
	\begin{itemize}
		\item[(a)] $(n-k+1,\blacktriangle)$, for $k\in (n]$, at rate ${\tiny \binom{n}{k}}\lambda_{n,k}$ if $n>1$. 
		\item[(b)] $(n+\ell-1,\triangle)$, for $\ell\in (\kappa]$, at rate $n\beta_\ell$.
		\item[(c)] $(n+\ell,b)$, for $\ell\in [n]$ and $b\in\{\diamond,\blackdiamond\}$, at rate ${\tiny\binom{n}{\ell}}\sigma_{n,\ell}^b$.
		\item[(d)\&(e)] $(n-\ell,b)$, for $\ell\in[n]$ and $b\in\{\circ,\bullet\}$ at rate ${\tiny\binom{n}{\ell}} m_{n,\ell}^{b}+\1_{\{\ell=1\}}n\theta_b$.
	\end{itemize}
\end{defi}

Recall that the types at the sinks of an ASG represent the (unknown) types in a sample of the population. 
They are a (random) function of the ASG and the types at the sources. 
Now, consider the random function $x\mapsto P_T(x)$, where $P_T(x)$ denotes the conditional probability given $(L_r,c_r)_{r\in[0,T]}$, that all the sinks in the ASG are of type $a$ if each source in the ASG is of type $a$ or $A$ with probability $x$ or $1-x$, respectively. 
The function $P_T$ is referred to in \cite{Cordero2022} as the \emph{Ancestral Selection Polynomial} (not to be confused with the polynomial $d_{\vec{s}}$ modeling the contribution of frequency-dependent selection in SDE~\eqref{eq:SDEWFP}). 
The name comes from the fact that $P_T$ can be expressed as
$$P_T(x)=\sum_{i=0}^{L_T} V_T(i)\,\binom{L_T}{i} x^i(1-x)^{L_T-i},\qquad x\in[0,1],$$
where $V_T(i)$ then represents the conditional probability given $(L_r,c_r)_{r\in[0,T]}$, that all the sinks in the ASG are of type $a$, if $i$ sources are of type $a$, and $L_T-i$ sources are of type $A$. 
Thus, indeed $P_T$ is a polynomial. 
Moreover, its degree is at most $L_T$, and $V_T\coloneqq (V_T(i))_{i\in[L_T]_0}$ is its vector of coefficients in the Bernstein basis of polynomials of degree at most $L_T$.

The evolution of the coefficient process $(V_r)_{r\geq 0}$ is driven by corresponding transitions in the ASG.
How (a) coalescences and (b) selective branching affect the coefficient process is explained in \cite[Section 2.5]{Cordero2022}. 
Therefore, we here only focus on coordinated selection and mutation.

\smallskip

\noindent (c) \emph{The effect of coordinated selection.} Assume that $L_{r-} = n$, $L_r=n+\ell$ and $c_r=\diamond$, that is, a coordinated $a$-branching of size $\ell$ occurs at (backward) time $r$. 
Forward in time this means that $\ell$-pairs of lines experience a binary merger. 
If $i$ of the $n+\ell$ lines are of type $a$, let $R_{n,\ell,i}^a$ denote the sum of 1) the number of merging pairs containing at least one line of type $a$, and 2) the number of non-merging lines of type $a$. 
Then, by the propagation rules at the end of Section \ref{sec:asg}, at time $r-$, there are $R_{n,\ell,i}^a$ lines of type $a$. 
Note that $R_{n,\ell,i}^a\sim \mathrm{HP}(n+\ell,\ell,i)$, {where the distribution $\mathrm{HP}(\cdot,\cdot)$ is defined in Section~\ref{sec:prequel}}. 
Similarly, assume that $L_{r-} = n$, $L_r=n+\ell$ and $c_r=\blackdiamond$, that is, a coordinated $A$-branching of size $\ell$ occurs at (backward) time $r$. 
As before, this means that forward in time, $\ell$-pairs of lines experience a binary merger. 
If $i$ of the $n+\ell$ lines are of type $a$, let $R_{n,\ell,i}^A$ denote the sum of 1) the number of merging pairs where the two lines are of type $a$, and 2) the number of non-merging lines of type $a$. 
Then, at time $r-$, there are $R_{n,\ell,i}^A$ lines of type $a$. 
Note that $R_{n,\ell,i}^A\sim n-\mathrm{HP}(n+\ell,\ell,n+\ell-i)$.\\
(d)\&(e) \emph{The effect of mutation.} Assume that $L_{r-} = n$, $L_r=n-\ell$ and $c_r=\circ$, that is, an $a$-mutation event of size $\ell$ occurs at (backward) time $r$. 
If $i$ of the $n-\ell$ sources are of type $a$, then at time $r-$, $i+\ell$  of the $n$ sources have type $a$.
Similarly, assuming that $L_{r-} = n$, $L_r=n-\ell$ and $c_r=\bullet$, that is, an $A$-mutation event of size $\ell$ occurs at (backward) time $r$. 
If $i$ of the $n-\ell$ sources are of type $a$, then at time $r-$, $i$  of the $n$ sources have type $a$.

\smallskip

This motivates the definition of the following operators.

\begin{defi}[{Branching, mutation, coalescence operators}]\label{def:bramutoperators}
	For $n\in \N$, define the following linear operators.
	
	\begin{itemize}
		\item[(a)] Coalescence: For $k\in\,(n]$ and if $n>1$, define $C^{n,k}: \R^{n+1}\to\R^{n-k+2}$
		as $$C^{n,k}v\coloneqq\left(\frac{i}{n-k+1}v_{i+k-1}+\left(1-\frac{i}{n-k+1}\right)v_i\right)_{i=0}^{n-k+1}.$$
		\item[(b)] Selective branching: For $l\in\,(\kappa]$, define $D^{n,l}: \R^{n+1}\to\R^{n+l}$
		as  $$D^{n,l}v\coloneqq\left(\E[p_{K_i}^{(l)}v_{i+1-K_i}+(1-p_{K_i}^{(l)})v_{i-K_i}]\right)_{i=0}^{n+l-1},\quad\textrm{where } K_i\sim\mathrm{Hyp}(n+l-1,i,l). $$ 
		\item[(c)] Coordinated branching favoring type $b\in \{a,A\}$: For $\ell\in[n]$, let $S^{n,\ell}_b:\R^{n+1}\to \R^{n+\ell+1}$  as $$S^{n,\ell}_bv\coloneqq \left( \E\left[v_{R_{n,\ell,i}^{b}}\right]\right)_{i=0}^{n+\ell},$$ 
		where $R_{n,\ell,i}^{a}\sim \mathrm{HP}(n+\ell,\ell,i)$ and $R_{n,\ell,i}^{A}\sim n-\mathrm{HP}(n+\ell,\ell,n+\ell-i)$.
		
		\item[(d)\&(e)] Individual and coordinated mutation to type~$b\in \{a,A\}$: For $k\in\,[n]$, define $M^{n,k}_b: \R^{n+1}\to\R^{n-k+1}$ as $M^{n,k}_bv\coloneqq\left(v_{i+k}\right)_{i=0}^{n-k}$ if $b=a$, and $M^{n,k}_bv\coloneqq\left(v_{i}\right)_{i=0}^{n-k}$ if $b=A$.
	\end{itemize}
\end{defi}
It follows from this definition and Definition \ref{def:BCP} that $(\dim(V_r)-1)_{r\geq 0}$ and the line-counting process~$L$ have the same distribution, which justify the use of the same notation for both processes.

We provided above a probabilistic interpretation of $V$, as a process on $\cup_{n\in\N_0}[0,1]^{n+1}$, based on the ASG. However, the linear structure of the transitions displayed in Definition \ref{def:bramutoperators} allows one to extend $V$ to the larger set $\cup_{n\in\N_0}\R^{n+1}$, which we do in Definition \ref{def:BCP}. This yields insights on a greater variety of mixed moments of $X$ via the Bernstein duality (Theorem \ref{thm:Bernsteinduality}), and will be useful in future applications. 

\section{Properties of the Bernstein dual and proof of the duality}\label{sec:lcp}

In this section, we establish the duality and properties of the corresponding dual. 

\subsection{\texorpdfstring{First properties of the processes $L$ and $V$}{First properties of the processes L and V}} \label{secfirstprop}

Let us denote by ${\Bc}^L \in \R^{\Nb_0 \times \Nb_0}$ the rate matrix of the process $L$. More precisely, for any $i,j \in \Nb_0$ with $i \neq j$, ${\Bc}^L(i,j)$ is the total transition rate from $i$ to $j$ and, for any $i \in \Nb_0$, ${\Bc}^L(i,i):=-\sum_{j \in \Nb_0 \setminus \{i\}} {\Bc}^L(i,j)$. Note from Definition \ref{defl} that ${\Bc}^L(i,\cdot)$ has finite support for all $i \in \Nb_0$. For any function $g:\Nb_0\to\Rb$ we define ${\Bc}^Lg$ via ${\Bc}^Lg(i)=\sum_{j \in \Nb_0} {\Bc}^L(i,j) (g(j)-g(i))$. By Definition \ref{defl} this operator acts on functions $g:\Nb_0\to\Rb$ via ${\Bc}^Lg=({\Bc}^L_{\Lambda}+{\Bc}^L_{S}+{\Bc}^L_{D}+{\Bc}^L_{M, \nu}+{\Bc}^L_{M, \theta})g,$ where 
\begin{equation} \label{genl}
	\begin{split}
		{\Bc}^L_{\Lambda} g(n) & := \sum_{k=2}^n \binom{n}{k} \lambda_{n,k} (g(n-k+1)-g(n)), \ {\Bc}^L_{S} g(n) := \sum_{\ell=1}^n \binom{n}{\ell}\sigma_{n,\ell} (g(n+\ell)-g(n)),  \\
		{\Bc}^L_{D} g(n) & := \sum_{\ell=2}^{\kappa} n\beta_\ell (g(n+\ell-1)-g(n)),  \
		{\Bc}^L_{M, \nu} g(n) := \sum_{\ell=1}^n \binom{n}{\ell}m_{n,\ell} (g(n-\ell)-g(n)), \\
		{\Bc}^L_{M, \theta} g(n) &:=  n \theta (g(n-1)-g(n)). 
	\end{split}
\end{equation}
(Recall that $\sigma_{n,\ell}= \sigma_{n,\ell}^a+\sigma_{n,\ell}^A$ and $m_{n,\ell}= m_{n,\ell}^a+m_{n,\ell}^A$.)
We start this section with two useful lemmas about the processes $L$ and $V$. 
\begin{lemme}\label{non-exp}
	Let $g:\Nb_0\to\Rb$ be defined via $g(n)\coloneqq n^2$ for all $n \in \N_0$. There is a constant $C \in (0,\infty)$ such that ${\Bc}^Lg(n) \leq Cg(n)$ for all $n \in \N_0$. Moreover, the process $L$ is conservative. 
\end{lemme}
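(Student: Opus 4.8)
The plan is to establish the Foster--Lyapunov drift inequality ${\Bc}^Lg\leq Cg$ directly from the explicit generator \eqref{genl}, and then deduce conservativeness from it. The starting observation is that $g(n)=n^2$ is nonnegative and strictly increasing in $n$, which lets me dispose of three of the five pieces of ${\Bc}^L$ by a sign argument: the operators ${\Bc}^L_{\Lambda}$, ${\Bc}^L_{M,\nu}$ and ${\Bc}^L_{M,\theta}$ all correspond to transitions that strictly \emph{decrease} the number of lines (coalescence sends $n\mapsto n-k+1$ with $k\geq2$, coordinated mutation sends $n\mapsto n-\ell$ with $\ell\geq1$, individual mutation sends $n\mapsto n-1$). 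Since each associated increment of $g$ is then nonpositive, one has ${\Bc}^L_{\Lambda}g(n)+{\Bc}^L_{M,\nu}g(n)+{\Bc}^L_{M,\theta}g(n)\leq 0$ for every $n$. It therefore suffices to bound the two branching terms ${\Bc}^L_{D}g$ and ${\Bc}^L_{S}g$ by a multiple of $g$.

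For the selective branching term the computation is entirely explicit: from $g(n+\ell-1)-g(n)=(\ell-1)(2n+\ell-1)$ one gets ${\Bc}^L_{D}g(n)=\sum_{\ell=2}^{\kappa}n\beta_\ell(\ell-1)(2n+\ell-1)$, and using $n\leq n^2$ (for $n\geq1$) this is at most $C_1 g(n)$ with $C_1\coloneqq\sum_{\ell=2}^{\kappa}\beta_\ell(\ell-1)(\ell+1)$. The environmental branching term is the main obstacle, because of the weights $\binom{n}{\ell}\sigma_{n,\ell}$ and the integral over the environment measure. Writing $g(n+\ell)-g(n)=2n\ell+\ell^2$ and recalling $\sigma_{n,\ell}=\int_{(-1,1)}|r|^{\ell-1}(1-|r|)^{n-\ell}\,\mu(\dd r)$, I would interchange the (finite) sum over $\ell$ with the integral and invoke the binomial moment identities $\sum_{\ell=0}^{n}\binom{n}{\ell}\ell\, s^\ell(1-s)^{n-\ell}=ns$ and $\sum_{\ell=0}^{n}\binom{n}{\ell}\ell^2 s^\ell(1-s)^{n-\ell}=ns(1-s)+(ns)^2$ with $s=|r|$. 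The factor $|r|^{-1}$ hidden in $|r|^{\ell-1}$ cancels exactly one power of $s$, so that $\sum_{\ell}\binom{n}{\ell}\sigma_{n,\ell}\,\ell=n\,\mu((-1,1))$ and $\sum_{\ell}\binom{n}{\ell}\sigma_{n,\ell}\,\ell^2\leq (n+n^2)\,\mu((-1,1))$. Adding the two contributions gives ${\Bc}^L_{S}g(n)\leq C_2 g(n)$ with $C_2\coloneqq 4\mu((-1,1))$ (for $n\geq1$; the case $n=0$ is trivial since $g(0)=0$). Combining the three estimates yields ${\Bc}^Lg(n)\leq Cg(n)$ with $C\coloneqq C_1+C_2$.

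Finally, I would deduce conservativeness from the drift bound via a supermartingale argument (this is the standard Foster--Lyapunov non-explosion criterion, using that $g$ is proper, i.e. $g(n)\to\infty$). Setting $f(t,n)\coloneqq e^{-Ct}n^2$, the inequality ${\Bc}^Lg\leq Cg$ gives $\partial_t f+{\Bc}^Lf\leq e^{-Ct}(-Cn^2+Cn^2)=0$, so $t\mapsto e^{-C(t\wedge\tau_m)}g(L_{t\wedge\tau_m})$ is a supermartingale, where $\tau_m\coloneqq\inf\{t\geq0:L_t\geq m\}$ and the stopping keeps the process bounded before explosion. Optional stopping yields $\E_{L_0}[e^{-C(t\wedge\tau_m)}g(L_{t\wedge\tau_m})]\leq g(L_0)$; restricting to $\{\tau_m\leq t\}$, where $g(L_{\tau_m})\geq m^2$ and $e^{-C\tau_m}\geq e^{-Ct}$, gives $\P_{L_0}(\tau_m\leq t)\leq e^{Ct}g(L_0)/m^2$. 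Since $\tau_m\uparrow\tau_\infty$ (the explosion time) and $\{\tau_\infty\leq t\}\subseteq\{\tau_m\leq t\}$ for every $m$, letting $m\to\infty$ gives $\P_{L_0}(\tau_\infty\leq t)=0$ for all $t$, hence $\tau_\infty=\infty$ almost surely and $L$ is conservative.
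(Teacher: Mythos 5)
Your proposal is correct, and the core of it coincides with the paper's own proof: the same decomposition of ${\Bc}^L$ into its five pieces, the same sign argument discarding the coalescence and mutation parts (all downward jumps, $g$ increasing), the same explicit identity $g(n+\ell-1)-g(n)=2n(\ell-1)+(\ell-1)^2$ for selective branching, and the same binomial-moment computation for ${\Bc}^L_S$, where the factor $|r|^{-1}$ in the rates cancels one power of $|r|$ from the first moment; you even land on the same constant $4\mu((-1,1))$ for the environmental part. The only genuine divergence is the conservativeness step. The paper observes that $g$ is norm-like and simply invokes the non-explosion criterion of Meyn and Tweedie (their Theorem 1(i)), whereas you reprove that criterion from scratch: the stopped supermartingale $e^{-C(t\wedge\tau_m)}g(L_{t\wedge\tau_m})$, optional stopping, the bound $\P(\tau_m\leq t)\leq e^{Ct}g(L_0)/m^2$, and $m\to\infty$. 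Your argument is sound --- note that it implicitly uses the fact that the stopped process is bounded (upward jumps from a state $n<m$ land at most at $2(m-1)\vee(m-1+\kappa-1)$, since environmental branchings satisfy $\ell\leq n$), which is what legitimizes Dynkin's formula and optional stopping; it would be worth making this overshoot bound explicit. What your route buys is self-containedness: the lemma no longer rests on an external citation. What the paper's route buys is brevity, and the reference to the norm-like/Foster--Lyapunov framework, which is reused conceptually elsewhere (e.g. in Lemma \ref{lem:generatorinequality} and Lemma \ref{lc-pos_abs}).
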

\begin{proof}
	Set ${\Bc}^L_{M} :=  {\Bc}^L_{M, \nu}+{\Bc}^L_{M, \theta}$.
	We have for all $n \in \N_0$, 
	\begin{align}
		{\Bc}^L_{\Lambda} g(n) & \leq 0, \qquad {\Bc}^L_{M} g(n) \leq 0,\\
		{\Bc}^L_{D} g(n) &= 2n^2 \sum_{\ell=2}^{\kappa} \beta_\ell (\ell-1) + n \sum_{\ell=2}^{\kappa} \beta_\ell (\ell-1)^2 \leq 3 g(n) \sum_{\ell=2}^{\kappa} \beta_\ell (\ell-1)^2, \nonumber \\
		{\Bc}^L_{S} g(n) & = \int_{(-1,1)} \left ( \sum_{\ell=0}^n (\ell^2+2n\ell) \binom{n}{\ell} |z|^{\ell}(1-|z|)^{n-\ell} \right )\frac{ \mu(\dd z)}{|z|} \nonumber \\
		& = \int_{(-1,1)} \left ( n|z|(1-|z|)+(n|z|)^2+2n^2|z| \right ) \frac{ \mu(\dd z)}{|z|} \leq 4 n^2 \mu((-1,1)) = 4 g(n) \mu((-1,1)), \label{contgenenv}
	\end{align}
	where we used the definitions of $\sigma_{n,\ell}^a$ and $\sigma_{n,\ell}^A$ (see \eqref{defrateposjumpenv}). Setting {$C:=4\mu((-1,1))+3\sum_{\ell=2}^{\kappa} \beta_\ell (\ell-1)^2$}, we get that ${\Bc}^Lg(n) \leq Cg(n)$ for all $n \in \N_0$. Note that the function $g$ is norm-like since $g(n)\to\infty$ as $n\to\infty$, see \cite{MeTwee} for the definition. Therefore $L$ is conservative by \cite[Thm. 1 (i)]{MeTwee}. 
\end{proof}

\begin{lemme}\label{controlrates}
	There is a constant $C \in (0,\infty)$ such that for any $n \in \N_0$ we have 
	\begin{align}
		\sum_{k=2}^n \binom{n}{k} \lambda_{n,k} + \sum_{\ell=1}^n \binom{n}{\ell}\sigma_{n,\ell} + \sum_{\ell=2}^{\kappa} n\beta_\ell + \sum_{\ell=1}^n \binom{n}{\ell}m_{n,\ell} + n \theta \leq Cn^2. \label{controlrates0}
	\end{align}
\end{lemme}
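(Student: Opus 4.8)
The plan is to bound each of the five summands on the left-hand side of \eqref{controlrates0} separately, since each is of the form ``finite measure integrated against a binomial-type sum''. In every case the combinatorial sum over the number of lines involved in a transition collapses, via the binomial theorem, to either a constant or a linear factor in $n$, after which integration against the relevant finite measure ($\Lambda$, $\mu$, or $\nu$) yields the bound. The two key combinatorial inequalities I would isolate first are
\[
\binom{n}{k}\le \binom{n}{2}\binom{n-2}{k-2}\quad(k\ge 2),\qquad \binom{n}{\ell}=\tfrac{n}{\ell}\binom{n-1}{\ell-1}\le n\binom{n-1}{\ell-1}\quad(\ell\ge 1),
\]
the first following from $\binom{n}{k}/\bigl(\binom{n}{2}\binom{n-2}{k-2}\bigr)=2/(k(k-1))\le 1$.

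For the coalescence term I would insert $\lambda_{n,k}=\int_{[0,1]}r^{k-2}(1-r)^{n-k}\Lambda(\dd r)$, apply the first inequality, and reindex $j=k-2$ so that $\sum_{k=2}^n\binom{n-2}{k-2}r^{k-2}(1-r)^{n-k}=(r+(1-r))^{n-2}=1$. Integrating against $\Lambda$ gives
\[
\sum_{k=2}^n\binom{n}{k}\lambda_{n,k}\le \binom{n}{2}\Lambda([0,1])\le \tfrac12\,\Lambda([0,1])\,n^2.
\]
The same mechanism, now using the second inequality together with $\sigma_{n,\ell}=\int_{(-1,1)}|r|^{\ell-1}(1-|r|)^{n-\ell}\mu(\dd r)$ and the analogous expression for $m_{n,\ell}$ with $\nu$, collapses the environmental and coordinated-mutation sums to $\sum_{\ell=1}^n\binom{n-1}{\ell-1}|r|^{\ell-1}(1-|r|)^{n-\ell}=1$, yielding
\[
\sum_{\ell=1}^n\binom{n}{\ell}\sigma_{n,\ell}\le n\,\mu((-1,1)),\qquad \sum_{\ell=1}^n\binom{n}{\ell}m_{n,\ell}\le n\,\nu((-1,1)).
\]
The remaining two terms are already linear: $\sum_{\ell=2}^\kappa n\beta_\ell=n\sum_{\ell=2}^\kappa\beta_\ell$ and $n\theta$.

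Adding the five estimates and bounding the linear terms by quadratic ones via $n\le n^2$ (valid for all $n\in\N_0$, with the case $n=0$ trivial since the left-hand side is then empty), I would conclude with the explicit constant
\[
C=\tfrac12\,\Lambda([0,1])+\mu((-1,1))+\nu((-1,1))+\sum_{\ell=2}^\kappa\beta_\ell+\theta,
\]
which is finite because $\Lambda$, $\mu$, $\nu$ are finite measures and $\beta,\theta$ are fixed. The only point requiring care — and the closest thing to an obstacle — is the uniform treatment of the atom $\Lambda(\{0\})$ at $r=0$ (where the convention $0^0=1$ makes only the $k=2$ term survive, consistently contributing $\binom{n}{2}\Lambda(\{0\})$) and checking that the bounds hold for every $n$ rather than only asymptotically; both are handled automatically by the pointwise-in-$r$ binomial collapse above.
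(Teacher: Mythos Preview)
Your proof is correct. For the environmental-branching and coordinated-mutation sums you arrive at the same bounds $n\,\mu((-1,1))$ and $n\,\nu((-1,1))$ as the paper, by an essentially equivalent route (the paper sums the binomial pmf exactly to $1-(1-|r|)^n$ and then expands geometrically, while you bound $\binom{n}{\ell}\le n\binom{n-1}{\ell-1}$ and apply the binomial theorem directly).

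The genuine difference is in the coalescence term. The paper computes $\sum_{k=2}^{n}\binom{n}{k}r^{k}(1-r)^{n-k}=1-(1-r)^{n}-nr(1-r)^{n-1}$ exactly and then applies two successive telescoping identities to control the $r^{-2}$-weighted integral, obtaining $n^{2}\Lambda([0,1))$. Your inequality $\binom{n}{k}\le\binom{n}{2}\binom{n-2}{k-2}$ instead factors out $\binom{n}{2}$ up front and collapses the remaining sum to $1$ by the binomial theorem in a single step, giving $\binom{n}{2}\Lambda([0,1])$. This is shorter, yields a constant about half the size, and automatically handles the atom $\Lambda(\{1\})$ (which the paper's computation, as written over $(0,1)$, omits; harmless, since that atom contributes only the constant $\Lambda(\{1\})\le n^{2}\Lambda(\{1\})$ for $n\ge 1$, but your argument needs no separate check).
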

In the light of Definition~\ref{def:BCP}, Inequality \eqref{controlrates0} can be rephrased by saying that: for any $v \in \R^{\infty}$ the total transition rate of the process $V$ away from the state $v$ is smaller than $C (\dim(v)-1)^2$. 
\begin{proof}[Proof of Lemma~\ref{controlrates}]
	For $n>1$, using~\eqref{defrateneutjump} and that $(1-z)^{n-1}\geq 1-z(n-1)$, we get
	\begin{align}
		{\sum_{k=2}^n} \binom{n}{k} \lambda_{n,k} = & \binom{n}{2} \Lambda(\{0\}) + \int_{(0,1]} \left ( \sum_{k=2}^n \binom{n}{k} z^{k}(1-z)^{n-k} \right ) z^{-2} \Lambda(\dd z) \nonumber \\
		= & \frac{n(n-1)}{2} \Lambda(\{0\}) + \int_{(0,1]} \left ( 1- (1-z)^{n} - nz(1-z)^{n-1} \right ) z^{-2} \Lambda(\dd z) \nonumber \\
		= & \frac{n(n-1)}{2} \Lambda(\{0\}) + \int_{(0,1]} \left ( \sum_{k=0}^{n-1} ((1-z)^k-(1-z)^{n-1}) \right ) z^{-1} \Lambda(\dd z) \nonumber \\
		\leq & n^2 \Lambda(\{0\}) + n \int_{(0,1]} (1-(1-z)^{n-1}) z^{-1} \Lambda(\dd z)  \leq n^2 \Lambda([0,1]). \label{qudraticbound}
	\end{align}
	For $n \in \N$, using \eqref{defrateposjumpmut} and recalling that $m_{n,\ell}= m_{n,\ell}^a+m_{n,\ell}^A$, we compute
	\begin{align}
		\sum_{k=1}^n &\binom{n}{k} m_{n,k} =  \int_{(-1,1)} \left ( \sum_{k=1}^n \binom{n}{k}|z|^{k}(1-|z|)^{n-k} \right ) |z|^{-1} \nu(\dd z) \nonumber \\
		= & \int_{(-1,1)} \left ( 1- (1-|z|)^{n} \right ) |z|^{-1} \nu(\dd z) = \int_{(-1,1)} \left ( \sum_{k=0}^{n-1} (1-|z|)^k \right ) \nu(\dd z) \leq n \nu((-1,1)). \label{mutationbound}
	\end{align}
	A similar calculation, using~\eqref{defrateposjumpenv} and recalling that $\sigma_{n,\ell} = \sigma_{n,\ell}^a+\sigma_{n,\ell}^A$, shows  
	\begin{align}
		\sum_{\ell=1}^n \binom{n}{\ell}\sigma_{n,\ell} \leq n \mu((-1,1)). \label{envbound}
	\end{align}
	Combining \eqref{qudraticbound}, \eqref{mutationbound} and \eqref{envbound} yields \eqref{controlrates0} with $C\coloneqq \Lambda([0,1])+\mu((-1,1))+\nu((-1,1))+\theta+\sum_{\ell=2}^{\kappa}\beta_\ell$. 
\end{proof}
The proof of Theorem~\ref{thm:Bernsteinduality} follows the standard approach via generators. 
In other words, we prove that a duality holds between the generator of~$X$ and the rate matrix of the Bernstein coefficient process~$V$, and we then use \citep[Thm. 3.1(b)]{liggettduality} to deduce that the duality extends at the level of semigroups, yielding Theorem~\ref{thm:Bernsteinduality}. 
This requires to check a condition for the extension of the duality to hold. 
The idea behind this condition is that it ensures that the initial value problem satisfied by the semigroup of $V$ at the duality function has a unique solution, and that the rate matrix of $V$ can be approximated by bounded operators. 
We thus need to show that the rate matrix of $V$ satisfies the condition, which we describe here in the setting of~\citep{liggettduality}. 
Let $\mathcal{Y}$ be a countable set and $Q(\cdot,\cdot)$ be a rate matrix on $\mathcal{Y}$. 
\begin{cond} \label{conditionq}
	There is an increasing sequence $(\mathcal{Y}_n)_{n \in \N_0}$ of subsets of $\mathcal{Y}$ such that $\mathcal{Y}=\cup_{n \in \N_0} \mathcal{Y}_n$. There is a function $\varphi:\mathcal{Y}\rightarrow\R_+$ such that $\lim_{n\to\infty} \inf_{w \notin \mathcal{Y}_n}\varphi(w)=\infty$ and a constant $c>0$ such that: 
	\begin{align}
		\sup_{w \in \mathcal{Y}_n} \sum_{u \in \mathcal{Y}\setminus\{w\}} Q(w,u) & < \infty, n \in \N_0, \label{conditionq0} \\
		\sum_{u \in \mathcal{Y}} Q(w,u) (\varphi(u)-\varphi(w)) & \leq c \varphi(w), w \in \mathcal{Y}, \label{conditionq1} \\
		\sum_{u \in \mathcal{Y}\setminus\{w\}} Q(w,u) & \leq c \varphi(w), w \in \mathcal{Y}. \label{conditionq2}
	\end{align}
\end{cond}

The following lemma shows that the coefficient process $V$ satisfies Condition \ref{conditionq}, up to a restriction. 
\begin{lemme} \label{condq}
	For any countable subset $\mathcal{Y}$ of $\R^\infty$ that is invariant by $V$, the restriction of the process $V$ to $\mathcal{Y}$ satisfies Condition \ref{conditionq} with the sequence $(\mathcal{Y}_n)_{n \in \N}$ defined via $\mathcal{Y}_n\coloneqq\cup_{k\leq n} (\mathcal{Y}\cap\R^{k+1})$ and the function $\varphi: \mathcal{Y} \rightarrow \R_+$ given by $\varphi(w)\coloneqq(\dim(w)-1)^2$. As a consequence the process $V$ is conservative. 
\end{lemme}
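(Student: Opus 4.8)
The plan is to verify Condition \ref{conditionq} term by term for the restricted process, taking $\varphi(w)=(\dim(w)-1)^2$, and then deduce conservativeness from the quadratic growth bound. Recall from the remark following Definition \ref{def:BCP} that $(\dim(V_t)-1)_{t\geq 0}$ is equal in distribution to the line-counting process $L$, so that $\varphi$ is really the function $g(n)=n^2$ evaluated at $n=\dim(w)-1$ that already appears in Lemma \ref{non-exp}. This is the key observation: all three displayed inequalities in Condition \ref{conditionq} are statements about how the \emph{dimension} of $V$ evolves, and the transitions of $V$ change the dimension in exactly the same way that the transitions of $L$ change its value (a $C^{n,k}$ step lowers the dimension by $k-1$, a $D^{n,\ell}$ step raises it by $\ell-1$, an $M^{n,\ell}_c$ step lowers it by $\ell$, and an $S^{n,\ell}_c$ step raises it by $\ell$, with the matching rates). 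Hence the two nontrivial inequalities reduce to the estimates already established for $L$.

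Concretely, I would argue as follows. First, $\Ys_n=\cup_{k\le n}(\Ys\cap\R^{k+1})$ is increasing with union $\Ys$, and $\inf_{w\notin\Ys_n}\varphi(w)=\inf_{w\notin\Ys_n}(\dim(w)-1)^2\ge n^2\to\infty$, so the sequence and $\varphi$ have the required form. For \eqref{conditionq0}, any $w\in\Ys_n$ has $\dim(w)-1\le n$, so by Lemma \ref{controlrates} the total outgoing rate is at most $Cn^2$, uniformly over $\Ys_n$; this gives the finite supremum. For \eqref{conditionq2}, the same bound from Lemma \ref{controlrates} applied at $n=\dim(w)-1$ gives total outgoing rate $\le C(\dim(w)-1)^2=C\varphi(w)$, which is exactly the asserted inequality (this is precisely the rephrasing of Lemma \ref{controlrates} stated just after it). For the drift inequality \eqref{conditionq1}, I would observe that $\sum_{u}Q(w,u)(\varphi(u)-\varphi(w))$ is, by the dimension-matching above, equal to $\Bc^L g(\dim(w)-1)$, and Lemma \ref{non-exp} gives $\Bc^L g(n)\le Cg(n)$; thus the left-hand side is at most $C\varphi(w)$. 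Taking $c$ to be the maximum of the two constants from Lemmas \ref{non-exp} and \ref{controlrates} yields all of \eqref{conditionq0}--\eqref{conditionq2} simultaneously.

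For conservativeness of $V$, I would note that the candidate Lyapunov function $\varphi$ is norm-like on each $\Ys$ in the sense relevant for the nonexplosion criterion: along any trajectory, $\dim(V_t)-1$ has the same law as $L_t$, which is conservative by Lemma \ref{non-exp}. More carefully, since $V$ can only explode by having $\dim(V_t)$ explode, and the dimension process is distributed as the conservative process $L$, the process $V$ makes only finitely many jumps in any finite time interval and is therefore conservative. Equivalently, the drift bound \eqref{conditionq1} together with $\varphi\to\infty$ off finite-dimensional slabs is a standard nonexplosion (Foster--Lyapunov / \cite{MeTwee}) criterion.

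The main obstacle, and the only place requiring genuine care, is \eqref{conditionq1}: one must be sure that replacing the scalar value $n$ by $\dim(w)-1$ faithfully transports the generator computation of Lemma \ref{non-exp} to $V$, i.e.\ that the increments $\varphi(u)-\varphi(w)$ over the $V$-transitions out of $w$ coincide with the increments $g(m)-g(n)$ over the corresponding $L$-transitions out of $n=\dim(w)-1$, with identical rates. This is immediate from comparing the transition lists in Definitions \ref{def:BCP} and \ref{defl} together with the dimension-change bookkeeping of the operators in Definition \ref{def:bramutoperators}, but it is the substantive point: once it is in place, \eqref{conditionq1} is not a new estimate but a verbatim consequence of Lemma \ref{non-exp}. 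The remaining two inequalities are direct restatements of Lemma \ref{controlrates}, and the choice $\Ys_n=\cup_{k\le n}(\Ys\cap\R^{k+1})$ makes \eqref{conditionq0} automatic.
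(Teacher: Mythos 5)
Your verification of Condition \ref{conditionq} is correct and follows the paper's proof essentially verbatim: the key identity $\varphi(V_t)=g(L_t)$ with $g(n)=n^2$ reduces \eqref{conditionq1} to Lemma \ref{non-exp}, Lemma \ref{controlrates} gives \eqref{conditionq2}, and \eqref{conditionq0} follows because $\mathcal{Y}_n=\{w\in\mathcal{Y}:\varphi(w)\le n^2\}$. Where you genuinely diverge is the conservativeness claim. The paper disposes of it in one line by citing \cite{liggettduality}: Condition \ref{conditionq} is Condition (Q) there, and Liggett's results already include non-explosion. You instead give a direct probabilistic argument: every transition of $V$ changes the dimension (each $C^{n,k}$, $D^{n,\ell}$, $M^{n,\ell}_c$, $S^{n,\ell}_c$ strictly changes $\dim$, since $k\ge 2$ and $\ell\ge 1$), so jumps of $V$ are in bijection with jumps of the dimension process, which is a Markov chain with exactly the rates of $L$ and hence non-explosive by Lemma \ref{non-exp}; this is valid and arguably more self-contained than the citation. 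One caveat: your parenthetical ``equivalently, Foster--Lyapunov / \cite{MeTwee}'' shortcut does not work as stated, because $\varphi$ is \emph{not} norm-like on $\mathcal{Y}$ --- its sublevel sets $\mathcal{Y}_n$ contain $\mathcal{Y}\cap\R^{k+1}$ for $k\le n$, which can be countably infinite, so the finite-sublevel-set hypothesis behind the criterion used in Lemma \ref{non-exp} fails for $V$. This is presumably exactly why the paper routes conservativeness of $V$ through Liggett's Condition (Q) rather than through \cite{MeTwee}; your jump-counting argument sidesteps the issue, so your proof stands once that aside is dropped.
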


\begin{proof}
	Let $(\mathcal{Y}_n)_{n \in \N}$ and $\varphi$ be given as in the statement of the lemma. Clearly $\mathcal{Y}=\cup_{n \in \N_0} \mathcal{Y}_n$ and $\inf_{w \notin \mathcal{Y}_n} \varphi(w)>n$. Recall also that $L_r=\dim(V_r)-1$ for all $r\geq 0$, and thus, for all $r\geq 0$, we have $\varphi(V_r)=g(L_r)$ with $g(\cdot)$ being as in Lemma \ref{non-exp}. Hence,  \eqref{conditionq1} follows from Lemma \ref{non-exp}. Moreover, Lemma \ref{controlrates} implies \eqref{conditionq2}. Finally, since $\mathcal{Y}_n=\{ w \in \mathcal{Y}, \varphi(w)\leq n^2 \}$, Condition \eqref{conditionq0} follows from \eqref{conditionq2}. 
	Noting that Condition \ref{conditionq} corresponds to Condition (Q) in \cite{liggettduality}, the fact that $V$ is conservative follows from the results in \cite{liggettduality}. 
\end{proof}

\subsection{Proof of the Bernstein duality}\label{sec:proofDuality}

In this section we prove Theorem~\ref{thm:Bernsteinduality} (Bernstein duality). We start with two lemmas that provide probabilistic arguments that will allow us to derive the duality relation at the level of the generator of $X$ and the rate matrix of $V$.

\begin{lemme}\label{computedistrib}
	Let $n \geq 0$ and $x,z \in [0,1]$. We set $(I(j))_{j \geq 0}$ and {$(R_{n,\ell,i}^a)_{\ell \in [n]_0, i \in [n+\ell]_0}$} to be collections of random variables such that $I(j)\sim\bindist{j}{x}$ for all $j\geq 0$ and $R_{n,\ell,i}^a$ is as in Section \ref{sec:asg}. We also set $J(z)$ to be a random variable such that $J(z)\sim\bindist{n}{z}$. We also assume that $(I(j))_{j \geq 0}$, {$(R_{n,\ell,i}^a)_{\ell \in [n]_0, i \in [n+\ell]_0}$} and $J(z)$ are mutually independent. Then we have 
	\begin{align}
		R_{n,J(z),I(n+J(z))}^a\sim\bindist{n}{x+zx(1-x)}. \label{computedistribexpr}
	\end{align}
\end{lemme}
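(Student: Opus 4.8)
The plan is to realize the left-hand side of \eqref{computedistribexpr} as a sum of $n$ independent and identically distributed indicator variables, and then to identify the common success probability as $x+rx(1-x)$ by an elementary computation.

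First I would exploit exchangeability to replace the ``fixed number of red balls'' picture underlying $\mathrm{HP}(\cdot,\cdot,\cdot)$ by an i.i.d.\ colouring. Recall that $R^a_{n,\ell,i}\sim\mathrm{HP}(n+\ell,\ell,i)$ is obtained from an urn of $n+\ell$ balls, $i$ red and the rest blue, by forming $\ell$ uniformly random pairs (leaving $n-\ell$ singletons) and counting the groups---there are always exactly $n$ of them---that contain at least one red ball. Since $I(n+\ell)\sim\bindist{n+\ell}{x}$, and given the total number of red balls their positions are uniform, the composition $R^a_{n,\ell,I(n+\ell)}$ has the same law as the number of red groups when each of the $n+\ell$ balls is coloured red independently with probability $x$, the colouring being independent of the (uniform) pairing.

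Next I would incorporate the randomness in $\ell=J(r)$. The key observation is that $J(r)\sim\bindist{n}{r}$ can be generated by declaring, independently for each of the $n$ groups, that it is a \emph{pair} with probability $r$ and a \emph{singleton} with probability $1-r$; by exchangeability of the uniform pairing, the joint law of (pair/singleton status of the $n$ groups, colours of their balls) is unchanged. Consequently $R^a_{n,J(r),I(n+J(r))}$ is distributed as the number of red groups in the following i.i.d.\ scheme: for each of the $n$ groups independently, it is a pair with probability $r$ and a singleton with probability $1-r$, and each ball it contains is red independently with probability $x$.

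Finally I would compute the per-group success probability: a singleton is red with probability $x$, while a pair is red with probability $1-(1-x)^2=2x-x^2$, so each group is red, independently of the others, with probability
\[
(1-r)\,x+r\,(2x-x^2)=x+rx(1-x).
\]
Summing the $n$ independent indicators yields $\bindist{n}{x+rx(1-x)}$, which is \eqref{computedistribexpr}. The only delicate point is the double use of exchangeability in the third paragraph: that the uniform pairing simultaneously lets one treat the $n$ groups as i.i.d.\ and lets one turn the binomial counts $J(r)$ and $I(n+J(r))$ into independent per-group and per-ball coin flips. Once this reduction is justified, the remaining computation is immediate.
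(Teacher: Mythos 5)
Your proposal is correct and follows essentially the same route as the paper: the paper's construction, in which each of $n$ ``old balls'' independently spawns two ``new balls'' with probability $r$ or one with probability $1-r$ and new balls are coloured red i.i.d.\ with probability $x$, is exactly your i.i.d.\ pair/singleton scheme, and both arguments conclude with the identical per-group computation $(1-r)x+r\bigl(1-(1-x)^2\bigr)=x+rx(1-x)$. The exchangeability reduction you flag as delicate is handled in the paper by the same (implicit) identification of the constructed realization with the law of $R^a_{n,J(r),I(n+J(r))}$, so there is no substantive difference.
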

\begin{proof}
	Let us construct a particular realization $Z$ of the random variable in the left-hand side of \eqref{computedistribexpr} and compute its distribution. Consider a set of $n$ \textit{old balls} numbered from $1$ to $n$. First, let each of these balls create two \textit{new balls} with probability $z$ or only one new ball with probability $1-z$. Second, color each of the new balls in red with probability $x$ and in blue with probability $1-x$. Third, {color old balls} according to the following rule: if an old ball created only one new ball, it gets the color of that ball; if an old ball created two new balls, it gets the color red if at least one of these balls is red {and the color blue otherwise}. We then define $Z$ to be the number of old balls colored in red. 
	
	On the one hand, note that $Z$ has the same distribution as the random variable in the left-hand side of \eqref{computedistribexpr}. On the other hand, the colors of the $n$ old balls are independent and each old ball is colored in red with probability $(1-z)x + z(1-(1-x)^2) = (1-z)x + z(2x-x^2) = x + zx(1-x).$
	Therefore $Z\sim\bindist{n}{x+zx(1-x)}$. Altogether, this proves \eqref{computedistribexpr}. 
\end{proof}

\begin{lemme}\label{computedistribmut}
	Let $n \geq 0$ and $x,z \in [0,1]$. We set $(\tilde I(j))_{j \geq 0}$ to be a collection of random variables such that $\tilde I(j)\sim\bindist{j}{x}$ for all $j\geq 0$, $J(z)$ to be a random variable such that $J(z)\sim\bindist{n}{z}$, and $(\tilde I(j))_{j \geq 0}$ and $J(z)$ to be independent. Then we have 
	\begin{equation}
		\tilde I(n-J(z))+J(z) \sim\bindist{n}{x+z(1-x)}. \label{computedistribmutexpr}
	\end{equation}
\end{lemme}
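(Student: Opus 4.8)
The plan is to mirror the ball-coloring argument used in the proof of Lemma \ref{computedistrib}: I will construct an explicit random variable $Z$ that manifestly has the law of the left-hand side of \eqref{computedistribmutexpr}, and then recompute its distribution from a second, ball-by-ball viewpoint.

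Concretely, consider $n$ balls numbered $1$ to $n$. First, mark each ball independently as a \emph{mutant} with probability $r$; let $J$ denote the number of mutant balls, so that $J\sim\bindist{n}{r}$ and agrees with $J(r)$ in law. Second, color every mutant ball red. Third, color each of the remaining $n-J$ non-mutant balls independently red with probability $x$ and blue with probability $1-x$. Define $Z$ to be the total number of red balls. Conditionally on $J=j$, the number of red balls among the $n-j$ non-mutant ones is $\bindist{n-j}{x}$, which, using the assumed independence of $(\tilde I(j))_{j\geq 0}$ from $J(r)$, is distributed as $\tilde I(n-j)$; hence $Z\stackrel{d}{=}\tilde I(n-J(r))+J(r)$, i.e. $Z$ has the law of the left-hand side of \eqref{computedistribmutexpr}.

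On the other hand, reading off the construction one ball at a time, the $n$ balls are colored independently of one another, and each single ball ends up red with probability $r\cdot 1+(1-r)\cdot x=x+r(1-x)$. Therefore $Z\sim\bindist{n}{x+r(1-x)}$, which yields \eqref{computedistribmutexpr}. The only point requiring care is the reduction in the first viewpoint: one must invoke the independence between the family $(\tilde I(j))_{j\geq 0}$ and $J(r)$ to identify the conditional count of red non-mutant balls with $\tilde I(n-J(r))$ rather than with a freshly sampled binomial. Once this is in place the computation is immediate, and no genuine obstacle remains.
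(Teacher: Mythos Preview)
Your proof is correct and follows essentially the same approach as the paper: both construct the same ball-marking-and-coloring experiment, identify the total red count first with $\tilde I(n-J(r))+J(r)$ and then, ball by ball, with a $\bindist{n}{x+r(1-x)}$ variable. The only difference is cosmetic (you call the marked balls ``mutants'' and spell out the conditioning step a bit more explicitly).
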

\begin{proof}
	Let us construct a realization $Z$ of the random variable in the left-hand side of \eqref{computedistribmutexpr} and compute its distribution. We consider a set of $n$ balls numbered from $1$ to $n$. We mark each of these balls with probability $z$, independently of each other. Each marked ball is then colored in red and each unmarked ball is colored in red with probability $x$ and in blue with probability $1-x$. We then define $Z$ to be the number of balls colored in red. On the one hand, one can see that $Z$ has the same distribution as $\tilde I(n-J(z))+J(z)$. On the other hand, the colors of the $n$ balls are independent, with {each ball} colored in red with probability $z+(1-z)x=x+z(1-x)$, so $Z \sim \bindist{n}{x+z(1-x)}$. We thus get \eqref{computedistribmutexpr}. 
\end{proof}

\begin{proof}[Proof of Theorem~\ref{thm:Bernsteinduality}]
	
	Let $(T_t)_{t\geq 0}$ be the semigroup of $X$, i.e. for any continuous function $f$ on $[0,1]$ and $x \in [0,1]$, $T_t f(x)\coloneqq \E_x[f(X_t)]$. Adapting the argument of \citep[Prop. A.4]{cordhumvech2022} to our case we can prove that $(T_t)_{t\geq 0}$ is a Feller semigroup with infinitesimal generator $\Ac$ acting on any twice continuously differentiable function $f$ on $[0,1]$ via 
	$\Ac f(x)=(\Ac_{\Lambda}+\Ac_{D}+\Ac_{S^a}+\Ac_{S^A}+\Ac_{M^a}+\Ac_{M^A})f(x),$
	where 
	\begin{align*}
		\Ac_{{\Lambda}}f(x)&\coloneqq \int\limits_{(0,1]}\!\!\!\! \big ( x[f(x+z(1-x))-f(x)] + (1-x)[f(x-zx)-f(x)] \big ) \frac{\Lambda(\dd z)}{z^2},\\  
		&\quad+\frac{\Lambda(\{0\})x(1-x)f''(x)}{2},\!\quad\quad\qquad\qquad\Ac_{D}f(x)\coloneqq d_{\sel}(x) f'(x),\\
		\Ac_{S^a}f(x)&\coloneqq\!\!\int\limits_{(0,1)}\!\! [ f(x+zx(1-x))-f(x) ] \frac{\mu(\dd z)}{z}, \quad\Ac_{S^A}f(x)\coloneqq\!\!\int\limits_{(-1,0)}\!\!\!\![ f(x+zx(1-x))-f(x) ] \frac{\mu(\dd z)}{|z|}, \nonumber \\
		\Ac_{M^a}f(x)&\coloneqq\!\!\int\limits_{(0,1)} [f(x+z(1-x))-f(x)]\frac{\nu(\dd z)}{z},\quad\Ac_{M^A}f(x)\coloneqq\!\!\int\limits_{(-1,0)} [f(x+zx)-f(x)]\frac{\nu(\dd z)}{|z|}.\nonumber 
	\end{align*}
	
	Let us fix $v\in \R^\infty$. We denote by  $(P_r)_{r\geq 0}$ and $\Bc$ respectively the semigroup and the rate matrix of $V$ on $C_V(v)$ (defined in \eqref{defcvv}), where $\Bc$ is defined in a similar way as $\Bc^L$, see Section \ref{secfirstprop}. By Definition \ref{def:BCP}, $\Bc(w,\cdot)$ has finite support for all $w \in C_V(v)$. For any function $g:C_V(v)\to\Rb$ we define ${\Bc}g$ 
	via ${\Bc} g(w)=\sum_{u \in C_V(v)} {\Bc}(w,u) (g(u)-g(w))$. By Definition \ref{def:BCP} $\Bc$ acts on functions $g:C_V(v)\to\Rb$ via 
	${\Bc}g(w)=({\Bc}_{D}+{\Bc}_{\Lambda}+{\Bc}_{S^a}+{\Bc}_{S^A}+{\Bc}_{M^a}+{\Bc}_{M^A})g(w),$ where for $w\in C_V(v)\cap\R^{n+1}$,
	\begin{align*}
		{\Bc}_{D}g(w)&:= n\sum_{\ell=1}^{\kappa}\beta_\ell[g(D^{n,\ell}w)-g(w)], & & {\Bc}_{\Lambda}g(w):= \sum_{k=2}^n \binom{n}{k} \lambda_{n,k} [g({C}^{n,k}w)-g(w)], \nonumber \\
		{\Bc}_{S^a}g(w)&:= \sum_{\ell=1}^n \binom{n}{\ell}\sigma^a_{n,\ell}[g(S_a^{n,\ell}w)-g(w)],  && {\Bc}_{S^A}g(w):= \sum_{\ell=1}^n \binom{n}{\ell}\sigma^A_{n,\ell}[g(S_A^{n,\ell}w)-g(w)], \\
		{\Bc}_{M^a}g(w)&:= \sum_{\ell=1}^n \binom{n}{\ell}m^a_{n,\ell}[g(M_a^{n,\ell}w)-g(w)],  && {\Bc}_{M^A}g(w):= \sum_{\ell=1}^n \binom{n}{\ell}m^A_{n,\ell}[g(M_A^{n,\ell}w)-g(w)].
	\end{align*}
	{Recall the definition of $H(\cdot,\cdot)$ in \eqref{defh}}. The target duality from Theorem \ref{thm:Bernsteinduality} can then be re-written as $T_t H(\cdot,v)(x)= P_t H(x,\cdot)(v)$. Using Lemma \ref{lem:actionV}, we obtain
	\begin{equation}\label{int1}
		\sup_{x\in[0,1],w\in C_V(v)} \lvert H(x,w)\rvert \leq \lVert v\rVert_\infty, 
	\end{equation}
	so $H(\cdot,\cdot)$ is bounded on $[0,1]\times C_V(v)$. For any $w\in C_V(v)$, $x\mapsto H(x,w)$ is infinitely differentiable, and therefore the function belongs to the domain of $\Ac$. By Lemma~\ref{condq} the process $V$ on $C_V(v)$ satisfies Condition~\ref{conditionq} (and thus Condition~(Q) in \cite{liggettduality}).
	According to \citep[Thm. 3.1(b)]{liggettduality} the duality $T_t H(\cdot,v)(x)= P_t H(x,\cdot)(v)$ will follow if we can prove the duality for the operators $\Ac$ and $\Bc$. More precisely, it is sufficient to prove that for $x\in [0,1]$ and $w\in \R^{\infty}$, we have \begin{equation}
		\Ac H(\cdot,w)(x)=\Bc H(x,\cdot)(w).\label{eq:generatorrelation}
	\end{equation}
	
	It was already shown in~\citep[Proof Thm. 2.14]{Cordero2022} that $\Ac_{\Lambda} H(\cdot,w)(x)={\Bc}_{\Lambda} H(x,\cdot)(w)$. Similarly, it is not difficult to modify~\citep[Proof Thm. 2.14]{Cordero2022} to show that $\Ac_{D} H(\cdot,w)(x)={\Bc}_{D} H(x,\cdot)(w)$. 
	
	Next, we show $\Ac_{S^a}H(\cdot,w)(x)= {\Bc}_{S^a}H(x,\cdot)(w)$. Let us fix $x \in [0,1]$, $n\in \N$, and $w\in \R^{n+1}$. Using the definitions of $\Bc_{S^a}$, $\sigma^a_{n,\ell}$ (see \eqref{defrateposjumpenv}), $H(\cdot,\cdot)$, and  $S_a^{n,\ell}$ (see Def. \ref{def:bramutoperators}), and using that $S_a^{n,0}w=w$, we get 
	\begin{align*}
		\Bc_{S^a}& H(x,\cdot)(w)  = \sum_{\ell=1}^n \binom{n}{\ell}\sigma^a_{n,\ell}[H(x,S_a^{n,\ell}w)-H(x,w)] \\
		& = \int_{(0,1)} \left ( \sum_{\ell=0}^n \binom{n}{\ell}z^{\ell-1}(1-z)^{n-\ell} \left [\sum_{i=0}^{n+\ell} (S_a^{n,\ell}w)_i \binom{n+\ell}{i}x^i(1-x)^{n+\ell-i}-H(x,w) \right ] \right ) \mu(\dd z) \\
		& = \int_{(0,1)} \left ( \sum_{\ell=0}^n \binom{n}{\ell}z^{\ell-1}(1-z)^{n-\ell} \left [\sum_{i=0}^{n+\ell} \E\left[w_{R_{n,\ell,i}^{a}}\right] \binom{n+\ell}{i}x^i(1-x)^{n+\ell-i}-H(x,w) \right ] \right ) \mu(\dd z) \\
		& = \int_{(0,1)} \left ( \E \left [ w_{R_{n,J(z),I(n+J(z))}^a} \right ] -H(x,w) \right ) z^{-1} \mu(\dd z), 
	\end{align*}
	where the random variable $R_{n,J(z),I(n+J(z))}^a$ in the last equality is the same as in Lemma \ref{computedistrib}. Using that lemma we get that this random variable has law $\bindist{n}{x+zx(1-x)}$ so the above equals 
	\begin{align*}
		& \int_{(0,1)} \left ( \sum_{i=0}^{n} w_i \binom{n}{i}(x+zx(1-x))^i(1-x-zx(1-x))^{n-i} -H(x,w) \right ) z^{-1} \mu(\dd z) \\
		= & \int_{(0,1)} \left ( H(x+zx(1-x),w) -H(x,w) \right ) z^{-1} \mu(\dd z) = \Ac_{S^a} H(\cdot,w)(x), 
	\end{align*}
	where we used the definitions of $H(\cdot,\cdot)$ and $\Ac_{S^a}$. A similar calculation yields $\Ac_{S^A} H(\cdot,w)(x)={\Bc}_{S^A} H(\cdot,w)(x)$. 
	
	Next, we show $\Ac_{M^a}H(\cdot,w)(x)= {\Bc}_{M^a}H(x,\cdot)(w)$. Fix $x \in [0,1]$, $n\in \N$, and $w\in \R^{n+1}$. Using the definitions of $\Bc_{M^a}$, $m^a_{n,\ell}$ (see \eqref{defrateposjumpmut}), $H(\cdot,\cdot)$, and $M_a^{n,\ell}$ (see Def. \ref{def:bramutoperators}), and using that $M_a^{n,0}w=w$, we get 
	\begin{align*}
		\Bc_{M^a} & H(x,\cdot)(w)  = \sum_{\ell=1}^n \binom{n}{\ell}m^a_{n,\ell}[H(x,M_a^{n,\ell}w)-H(x,w)] \\
		& = \int_{(0,1)} \left ( \sum_{\ell=0}^n \binom{n}{\ell}z^{\ell-1}(1-z)^{n-\ell} \left [\sum_{i=0}^{n-\ell} (M_a^{n,\ell}w)_i \binom{n-\ell}{i}x^i(1-x)^{n-\ell-i}-H(x,w) \right ] \right ) \nu(\dd z) \\
		& = \int_{(0,1)} \left ( \sum_{\ell=0}^n \binom{n}{\ell}z^{\ell-1}(1-z)^{n-\ell} \left [\sum_{i=0}^{n-\ell} w_{i+\ell} \binom{n-\ell}{i}x^i(1-x)^{n-\ell-i}-H(x,w) \right ] \right ) \nu(\dd z) \\
		& = \int_{(0,1)} \left ( \E \left [ w_{\tilde I(n-J(z))+J(z)} \right ] -H(x,w) \right ) z^{-1} \nu(\dd z), 
	\end{align*}
	where the random variable $\tilde I(n-J(z))+J(z)$ in the last equality is the same as in Lemma \ref{computedistribmut}. Using that lemma we get that this random variable has law $\bindist{n}{x+z(1-x)}$. In particular, the above equals
	\begin{align*}
		& \int_{(0,1)} \left ( \sum_{i=0}^{n} w_i \binom{n}{i}(x+z(1-x))^i(1-x-z(1-x))^{n-i} -H(x,w) \right ) z^{-1} \nu(\dd z) \\
		= & \int_{(0,1)} \left ( H(x+z(1-x),w) -H(x,w) \right ) z^{-1} \nu(\dd z) = \Ac_{M^a} H(\cdot,w)(x), 
	\end{align*}
	where we have used the definitions of $H(\cdot,\cdot)$ and $\Ac_{M^a}$. A similar calculation leads to $\Ac_{M^A} H(\cdot,w)(x)={\Bc}_{M^A} H(\cdot,w)(x)$. 
	Altogether, this proves \eqref{eq:generatorrelation}, which completes the proof. 
\end{proof}

\subsection{\texorpdfstring{Useful properties of the line-counting process $L$}{Useful properties of the line-counting process L}}\label{sec:usefulpropertiesofL}

As mentioned in the introduction, for applications that follow, we require the process $L$ to be positive recurrent if there is no mutation, and to be almost surely absorbed if there is mutation. We already alluded in the main result section to the fact that~\eqref{Assump} provides the appropriate condition. Let us recall this. To this end, define
$$b(\beta)\coloneqq \sum_{\ell=2}^\kappa \beta_\ell (\ell-1),\quad\text{and}\quad c(\Lambda)\coloneqq \int_{[0,1]}|\log(1-z)|\frac{\Lambda(\dd z)}{z^2},$$
and recall that $\theta=\theta_a+\theta_A$. Then~\eqref{Assump} reads
$b(\beta)+\mu(-1,1)<c(\Lambda)+\nu(-1,1)+\theta.$
In the remainder of this section, we will show, using ideas developed in~\cite{foucart2013impact}, that under \eqref{Assump}, the process $L$ is either positive recurrent or almost surely absorbed in finite time. Define $\delta:\N\to\R$ and $f:\N \to \R$ via
\begin{align*}
	\delta(n)&\coloneqq \binom{n}{2}\Lambda(\{0\})-n\int_{(0,1]}\log\left(1-\frac{1}{n}(nz-1+(1-z)^n)\right)\frac{\Lambda(\dd z)}{z^2},\\
	f(\ell)&\coloneqq\sum_{k=2}^\ell \frac{k}{\delta(k)}\log\Big( \frac{k}{k-1}\Big).
\end{align*}
The next lemma is a generalization of \cite[Lemma 2.3]{foucart2013impact}, which corresponds to the case $\beta_2>0$, $\beta_\ell=0$ for $\ell\neq 2$, and $p_{1}^{(2)}=0$.
\begin{lemme}\label{lem:generatorinequality}
	Recall that ${\Bc}^L$, defined in Section \ref{secfirstprop}, is the rate matrix of the process $L$. Assume that \eqref{Assump} is satisfied. Then there exists $n_0\in \N$ and $\varepsilon>0$ such that for all $n\geq n_0$ $${{\Bc}^L f(n)}\leq -1+ \frac{b(\beta)+\mu(-1,1)-(\nu(-1,1)+\theta)}{c(\Lambda)}+\varepsilon \left(b(\beta)+\mu(-1,1)\right)<0,$$
	with the usual convention that $1/\infty=0$.
\end{lemme}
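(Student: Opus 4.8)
The plan is to treat the inequality as a Foster--Lyapunov drift estimate for the Lyapunov function $f$, estimating each of the five pieces of the decomposition ${\Bc}^L={\Bc}^L_{\Lambda}+{\Bc}^L_{S}+{\Bc}^L_{D}+{\Bc}^L_{M,\nu}+{\Bc}^L_{M,\theta}$ from \eqref{genl} separately. The goal is to show that, for large $n$, the coalescence part contributes at most $-1$, the two branching parts (selective and environmental) contribute at most $\tfrac{b(\beta)+\mu(-1,1)}{c(\Lambda)}$ up to an error controlled by $\varepsilon(b(\beta)+\mu(-1,1))$, and the two mutation parts contribute at most $-\tfrac{\nu(-1,1)+\theta}{c(\Lambda)}$. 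Summing these and invoking \eqref{Assump}, which says precisely that the limiting constant $-1+\tfrac{b(\beta)+\mu(-1,1)-\nu(-1,1)-\theta}{c(\Lambda)}$ is negative, then yields the claim for $\varepsilon$ small and $n\geq n_0$.

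First I would record the monotonicity properties of $\delta$ and $f$ on which everything rests. Writing the argument of the logarithm in $\delta(n)$ as $(1-r)+\tfrac1n\bigl(1-(1-r)^n\bigr)$, one checks that $n\mapsto\tfrac1n(1-(1-r)^n)$ is decreasing for each $r$, so that $n\mapsto\delta(n)/n$ is increasing with $\lim_{n\to\infty}\delta(n)/n=c(\Lambda)$ (by monotone/dominated convergence, allowing $c(\Lambda)=\infty$ under the convention $1/\infty=0$). In particular $\delta(n)/n\leq c(\Lambda)$ and $j\mapsto j/\delta(j)$ is decreasing. Consequently the increments $\Delta f(j)\coloneqq f(j)-f(j-1)=\tfrac{j}{\delta(j)}\log\tfrac{j}{j-1}$ form a positive decreasing sequence (a product of two positive decreasing sequences), so $f$ is concave; moreover, using $n\log\tfrac{n}{n-1}\geq1$ together with $\delta(n)\leq c(\Lambda)n$ gives the clean bound $n\,\Delta f(n)\geq 1/c(\Lambda)$ for all $n$, while $n\,\Delta f(n+1)\to 1/c(\Lambda)$.

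The crux is the coalescence estimate ${\Bc}^L_{\Lambda} f(n)\leq-1$. Since $j/\delta(j)$ is decreasing, telescoping gives $f(n)-f(n-k+1)=\sum_{j=n-k+2}^{n}\tfrac{j}{\delta(j)}\log\tfrac{j}{j-1}\geq\tfrac{n}{\delta(n)}\log\tfrac{n}{n-k+1}$, whence ${\Bc}^L_{\Lambda} f(n)\leq-\tfrac{n}{\delta(n)}\sum_{k=2}^n\binom nk\lambda_{n,k}\log\tfrac{n}{n-k+1}$. It therefore suffices to prove $\sum_{k=2}^n\binom nk\lambda_{n,k}\log\tfrac{n}{n-k+1}\geq\delta(n)/n$. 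Handling the atom $\Lambda(\{0\})$ separately (there $n\log\tfrac{n}{n-1}\geq1$ suffices), this reduces to the pointwise inequality, for $B\sim\bindist{n}{r}$,
\[
\E\Big[\log\tfrac{n}{n+1-B}\,\mathds{1}_{\{B\geq2\}}\Big]\ \geq\ -\log\Big((1-r)+\tfrac{1-(1-r)^n}{n}\Big).
\]
The key device is to set $\tilde v_k\coloneqq\tfrac{n+1-k}{n}$ for $k\geq1$ and $\tilde v_0\coloneqq1$: then $\tilde v_0=\tilde v_1=1$, so the $\{B\leq1\}$ terms contribute nothing to the left-hand side and it equals $\E[-\log\tilde v_B]$, while a direct computation yields $\E[\tilde v_B]=(1-r)+\tfrac{1-(1-r)^n}{n}$. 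Convexity of $-\log$ and Jensen's inequality then give $\E[-\log\tilde v_B]\geq-\log\E[\tilde v_B]$, which is exactly the desired bound. This is the step I expect to be most delicate, because the naive first-order Jensen bound (retaining the $B=0$ term) falls short by an exponentially small amount; the vanishing of the boundary terms for the truncated weights $\tilde v$ is what makes the estimate tight, and it is the genuine generalisation of \cite[Lemma 2.3]{foucart2013impact}.

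Finally I would dispatch the remaining four terms using concavity of $f$. For the branching and environmental parts, $f(n+\ell-1)-f(n)\leq(\ell-1)\Delta f(n+1)$ and $f(n+\ell)-f(n)\leq\ell\,\Delta f(n+1)$, together with the identities $\sum_{\ell=2}^{\kappa}n\beta_\ell(\ell-1)=n\,b(\beta)$ and $\sum_{\ell=1}^n\ell\binom n\ell\sigma_{n,\ell}=n\,\mu(-1,1)$, give ${\Bc}^L_{D} f(n)+{\Bc}^L_{S} f(n)\leq\bigl(b(\beta)+\mu(-1,1)\bigr)\,n\,\Delta f(n+1)$; since $n\,\Delta f(n+1)\to 1/c(\Lambda)$ this is at most $\tfrac{b(\beta)+\mu(-1,1)}{c(\Lambda)}+\varepsilon\bigl(b(\beta)+\mu(-1,1)\bigr)$ for $n\geq n_0$. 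For the mutation parts, concavity gives $f(n-\ell)-f(n)\leq-\ell\,\Delta f(n)$ and $f(n-1)-f(n)=-\Delta f(n)$, and with $\sum_{\ell=1}^n\ell\binom n\ell m_{n,\ell}=n\,\nu(-1,1)$ and the clean bound $n\,\Delta f(n)\geq1/c(\Lambda)$ one obtains ${\Bc}^L_{M,\nu} f(n)+{\Bc}^L_{M,\theta} f(n)\leq-\tfrac{\nu(-1,1)+\theta}{c(\Lambda)}$ for every $n$. Adding the five estimates produces exactly the right-hand side of the lemma, and \eqref{Assump} makes the limiting constant strictly negative, so a sufficiently small $\varepsilon$ keeps the whole expression below zero.
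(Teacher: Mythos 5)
Your proof is correct, and it follows the same Foster--Lyapunov skeleton as the paper (same $f$ and $\delta$, term-by-term bounds on the five pieces of ${\Bc}^L$ from \eqref{genl}, then \eqref{Assump} to force strict negativity for small $\varepsilon$), but it is genuinely more self-contained where it matters. The paper never proves the coalescence--selection estimate: it imports $({\Bc}^L_{\Lambda}+{\Bc}^L_{D})f(n)\leq -1+b(\beta)/c(\Lambda)+\varepsilon b(\beta)$, together with the bounds $1/c(\Lambda)\leq n/\delta(n)\leq 1/c(\Lambda)+\varepsilon$ and the monotonicity of $n/\delta(n)$, wholesale from the proof of \cite[Lemma 5.3]{Cordero2022}; only the three new terms ${\Bc}^L_{S}$, ${\Bc}^L_{M,\nu}$, ${\Bc}^L_{M,\theta}$ are estimated in the paper, by telescoping $f$ and using the monotonicity of $j/\delta(j)$ --- which is exactly the content of your concavity bounds. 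You instead prove the pure coalescence bound ${\Bc}^L_{\Lambda}f(n)\leq -1$ from scratch; your truncated-weights Jensen device ($\tilde v_0=\tilde v_1=1$, $\tilde v_k=(n+1-k)/n$, so that the $B\in\{0,1\}$ terms drop out of $\E[-\log\tilde v_B]$ while $\E[\tilde v_B]=1-r+(1-(1-r)^n)/n$) is correct, and it supplies precisely the kind of computation the paper takes from the literature (the generalisation of \cite[Lemma 2.3]{foucart2013impact}). You then decouple selection from coalescence, controlling ${\Bc}^L_{D}$ and ${\Bc}^L_{S}$ together via $n\,\Delta f(n+1)\to 1/c(\Lambda)$, which reproduces the paper's error term $\varepsilon(b(\beta)+\mu(-1,1))$. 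What your route buys is a complete, citation-free proof with a cleaner accounting (coalescence alone produces the $-1$; each mechanism contributes its own summand); what the paper's route buys is brevity.

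One small repair is needed. Your concavity inequality $f(n-\ell)-f(n)\leq -\ell\,\Delta f(n)$ is valid only for $\ell\leq n-1$: for $\ell=n$ (total coordinated mutation, rate $m_{n,n}$), with the natural convention $f(0)=0$, the bottom increment $f(1)-f(0)=0$ breaks the monotonicity of increments, and concavity only yields $f(0)-f(n)=f(1)-f(n)\leq -(n-1)\,\Delta f(n)$. The deficit $\Delta f(n)\,m_{n,n}$ vanishes as $n\to\infty$; alternatively, since $\Delta f(n)\to 0$ while $\Delta f(2)>0$ is fixed, one has $\sum_{j=2}^{n}\Delta f(j)\geq \Delta f(2)+(n-2)\Delta f(n)\geq n\,\Delta f(n)$ for all large $n$, which restores your inequality on $n\geq n_0$ --- and $n\geq n_0$ is all the lemma requires. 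This is a one-line fix and does not affect the conclusion; for what it is worth, the paper's own display for $f(n-k)-f(n)$ has the same boundary glitch at $k=n$, where its $j=1$ summand is ill-defined because $\delta(1)=0$.
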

\begin{proof}
	Recall that ${\Bc}^L=({\Bc}^L_{\Lambda}+{\Bc}^L_{S}+{\Bc}^L_{D}+{\Bc}^L_{M, \nu}+{\Bc}^L_{M, \theta})$ with the building blocks defined in \eqref{genl}. In the proof of \cite[Lemma 5.3]{Cordero2022} it was shown that, for any $\varepsilon>0$ there is $n_0\in\Nb$ such that for all $n\geq n_0$ 
	\begin{equation}\label{lpre}
		\frac{n}{\delta(n)}\leq \frac{1}{c(\Lambda)}+\varepsilon\quad\text{and}\quad({{\Bc}^L_{\Lambda}+{\Bc}^L_{D}})f(n)\leq -1+\frac{b(\beta)}{c(\Lambda)}+\varepsilon b(\beta).
	\end{equation}
	Moreover, the function $n\mapsto n/\delta(n)$ is non-increasing and converges to $1/c(\Lambda)$. Thus, for all $n\in\Nb$,
	\begin{equation}\label{lb}
		\frac{1}{c(\Lambda)}\leq \frac{n}{\delta(n)}. 
	\end{equation}
	Let us now see the effect of the other operators. First note that, using \eqref{lb}, we obtain
	\begin{align}\label{uth}
		{{\Bc}^L_{M, \theta} f(n)}&=n\theta(f(n-1)-f(n))=-n\theta\frac{n}{\delta(n)}\log\left(\frac{n}{n-1}\right)\leq n\frac{\theta}{c(\Lambda)}\log\left(1-\frac{1}{n}\right)\leq -\frac{\theta}{c(\Lambda)}. 
	\end{align}
	Similarly, for $k\in [n]$,
	\begin{align*}
		f(n-k)-f(n)&\leq-\frac{1}{c(\Lambda)}\sum_{j=n-k+1}^n\log\left(\frac{j}{j-1}\right){=} \frac{1}{c(\Lambda)}\log\left(1-\frac{k}{n}\right)\leq-\frac{1}{c(\Lambda)}\frac{k}{n}.
	\end{align*}
	Therefore, using that ${\tiny \binom{n}{k}}\frac{k}{n}={\tiny \binom{n-1}{k-1}}$
	\begin{align}\label{uM}
		{{\Bc}^L_{M, \nu} f(n)}&\leq- \frac{1}{c(\Lambda)}\int\limits_{(-1,1)} \left ( \sum_{k=1}^n \binom{n-1}{k-1}|z|^{k-1}(1-|z|)^{n-k} \right ) \nu(\dd z)=-\frac{\nu(-1,1)}{c(\Lambda)}. 
	\end{align}
	Finally, let us consider ${{\Bc}^L_{S}}$. To this end, note that, thanks to \eqref{lpre}, for $n\geq n_0$
	\begin{align*}
		f(n+\ell)-f(n)&\leq\left(\frac{1}{c(\Lambda)}+\varepsilon\right)\sum_{j=n+1}^{n+\ell}\log\left(\frac{j}{j-1}\right){=} \left(\frac{1}{c(\Lambda)}+\varepsilon\right)\log\left(1+\frac{\ell}{n}\right)\leq\left(\frac{1}{c(\Lambda)}+\varepsilon\right)\frac{\ell}{n}.
	\end{align*}
	Hence, proceeding as for ${{\Bc}^L_{M, \nu} f(n)}$, we get
	\begin{align}\label{uS}
		{{\Bc}^L_{S} f(n)}&\leq \left(\frac{1}{c(\Lambda)}+\varepsilon\right)\mu(-1,1).
	\end{align}
	The first inequality in the statement follows combining \eqref{lpre}, \eqref{uth}, \eqref{uM} and \eqref{uS}. Since under \eqref{Assump}, 
	$$-1+ \frac{b(\beta)+\mu(-1,1)-(\nu(-1,1)+\theta)}{c(\Lambda)}<0,$$
	the second inequality follows by choosing first $\varepsilon>0$ sufficiently small so that the middle term is strictly smaller than $0$ and then choosing $n_0$ for that choice of $\varepsilon.$  
\end{proof}

\begin{lemme}[Recurrence and absorption of~$L$]\label{lc-pos_abs}
	Assuming that~\eqref{Assump} is satisfied, the following assertions hold.
	\begin{enumerate}
		\item If \eqref{eq:nomut} holds, then $1$ is positive recurrent for $L$. 
		\item If \eqref{eq:nomut} does not hold,
		then $L$ is absorbed at $0$ in finite time. 
	\end{enumerate}
\end{lemme}
\begin{proof}
	Let $T^{(n)}\coloneqq \inf\{r\geq 0:L_r<n\}$, $n\in\Nb.$
	Since the process~$L$ is conservative by Lemma \ref{non-exp} and $f$ is bounded, $(f,\Bc^Lf)$ is in the full generator. 
	Thus, we can use Lemma \ref{lem:generatorinequality} and proceed analogously to the proof of \cite[Lemma 5.4]{Cordero2022} (see also \cite[Lemma 2.4]{foucart2013impact}), to show that there is $n_0\in\Nb$ and a constant $c>0$ such that
	\begin{equation}\label{recu}
		\Eb_n[T^{(n_0)}]<cf(n),\quad n\geq n_0.
	\end{equation}
	Note that one can construct $p\in(0,1)$ such that every time the process $L$ enters $[n_0]_0$ the probability that $L$ hits {$0/1$} before leaving $[n_0]_0$ is at least $p$. In the case without mutation, {we choose $p$} as the probability of going from $n_0$ to $1$ by consecutive coalescences; in the case with mutation {we choose $p$} as the probability of going from $n_0$ to $0$ by consecutive mutations. 
	{Considering the possible transitions of the line-counting process $L$, we see that, directly upon leaving $[n_0]_0$, $L$ arrives in the finite set $\{n_0+1,\ldots,n_1\}$ where $n_1:=2n_0\vee (n_0+\kappa)$. This and \eqref{recu} show that, every time $L$ leaves $[n_0]_0$, the return time to $[n_0]_0$ has finite expectation. Combining both arguments, we get that in the case without (resp. with) mutation the expectation of the hitting time of $1$ (resp. $0$) by $L$ is finite under~$\P_n$ for any $n\in\N$.} Thus, $1$ is positive recurrent for $L$ in case (1) and $L$ is absorbed at $0$ in finite time in case (2).
\end{proof}

\section{Proof of other main results}\label{sec:proofmainres}

\subsection{Properties of the Bernstein dual: Proof of Propositions \ref{prop:asympV} and \ref{prop:absV}}\label{sec:propB}

We now use the results from Section \ref{sec:lcp} to prove the properties of the Bernstein coefficient process stated in Section \ref{S2}. 
We start with the following lemma, which provides some basic properties of the operators introduced in Definition~\ref{def:bramutoperators}.
\begin{lemme}\label{lem:actionV}
	Let $n\in \N$ and $v\in \R^{n+1}$. Then for any $k\in (n]$, $l\in(\kappa]$ and $\ell\in[n]$,
	\begin{align*}
		&(C^{n,k}v)_0 =(D^{n,l}v)_0 =(S_a^{n,\ell}v)_0=(S_A^{n,\ell}v)_0= v_0, \\
		&(C^{n,k}v)_{n-k+1}=(D^{n,l}v)_{n+l-1}= (S_a^{n,\ell}v)_{n+\ell}=(S_A^{n,\ell}v)_{n+\ell}=v_n, \\
		&\lVert C^{n,k} v\rVert_{\infty},\ \lVert D^{n,l} v\rVert_{\infty},\ \lVert S^{n,\ell}_a v\rVert_{\infty},\ \lVert S^{n,\ell}_A v\rVert_{\infty},\ \lVert M^{n,\ell}_a v\rVert_{\infty}, \ \lVert M^{n,\ell}_A v\rVert_{\infty}\ \leq \lVert v\rVert_{\infty}.
	\end{align*}	
\end{lemme}
\begin{proof}
	The result follows in a straightforward way from the definition of the operators.
\end{proof}

\begin{proof}[Proof of Propositions \ref{prop:asympV} and \ref{prop:absV}] 
	Proposition \ref{prop:absV} follows directly from Lemma \ref{lc-pos_abs}. 
	We now assume \eqref{eq:nomut} and we prove Proposition~\ref{prop:asympV}. 
	Lemma~\ref{lem:actionV} yields directly that the functions $r\mapsto V_{r}(0)$ and $r\mapsto V_{r}(L_r)$ are constant. 
	In particular, the restriction of $V$ to the set $C^0_V((a,b)^T)$ (defined in \eqref{defcvv}) is an irreducible continuous-time Markov chain. Indeed, for $w,w'\in C^0_V((a,b)^T)$ to go from $w$ to $w'$, first go from $w$ to $(a, b)^T$ by successive coalescence operations. Then go from $(a, b)^T$ to $w'$ in a finite number of successive selection and coalescence operations.
	Let us assume now that \eqref{Assump} is satisfied. 
	\smallskip
	
	(1) By Lemma~\ref{lc-pos_abs}, \eqref{Assump} implies that $1$ is positive recurrent for $L$. Since, in addition, the functions $r\mapsto V_{r}(0)$ and $r\mapsto V_{r}(L_r)$ are constant, the state $(a,b)^T$ is positive recurrent for $V$. By irreducibility, the process $V$ on $C^0_V((a,b)^T)$ is positive recurrent, and hence it admits a unique invariant distribution $\mu^{a,b}$~\cite[Thm. 3.5.2, Thm. 3.5.3]{norris1998markov} with support $C^0_V((a,b)^T)$. 
	
	\smallskip
	
	(2) Let $V_{\infty}^{a,b}\sim \mu^{a,b}$. If $V_0=(a,b)^T$, then $V_r\to V_{\infty}^{a,b}$ in law as $r\to\infty$ by classic Markov chain theory~\citep[Thm. 3.6.2]{norris1998markov}. On the other hand, for $v \in \Rb^{\infty}$ with $v_0=a$ and $v_{\dim(v)-1}=b$, since $1$ is positive recurrent for $L$, the process $V$ starting at $V_0=v$ enters $C^0_V((a,b)^T)$ in finite time $\P_{v}$-almost surely (in particular $C^0_V((a,b)^T) \subset C^0_V(v)$). Hence, the result of Proposition \ref{prop:asympV} follows. 
\end{proof}

\subsection{The case without mutation: Proof of Theorem \ref{fixext}} \label{sec:prooffixation}
In this section we assume $\vec{\theta}=0$ and $\nu=0$ and study fixation and extinction. 
\begin{lemme} \label{lemf}
	Assume that \eqref{Assump} and \eqref{eq:nomut} are satisfied. Let $h:[0,1]\rightarrow [0,\infty)$ be defined by $h(x)\coloneqq \E[H(x,V^{0,1}_{\infty})]$, where the r.v. $V^{0,1}_{\infty}$ is as in Proposition \ref{prop:asympV}. For any $x \in [0,1]$ and $k \geq 1$, we have 
	\begin{align}
		h(x)=\lim_{r\to\infty} \E_{e_k}[ H(x,V_r)], \label{feqlimhxvt}
	\end{align}
	where the limit in the right-hand side does not depend on the choice of $k \geq 1$. 
	Moreover $h$ is continuous and satisfies $h(0)=0$, $h(1)=1$ and $h(x) \in (0,1)$ for $x \in (0,1)$. \end{lemme}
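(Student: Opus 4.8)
The plan is to extract every claimed property of $f$ from the representation $f(x)=\E[H(x,V^{0,1}_\infty)]$ together with the description of the stationary law $\mu^{0,1}$ given in Proposition~\ref{prop:asympV}. The structural fact that powers the whole argument is that, along the dynamics of $V$, the coordinate vector stays in $[0,1]^{\dim}$ with its two extreme coordinates frozen. Indeed, reading off Definition~\ref{def:bramutoperators}, each output coordinate of $C^{n,k}$, $D^{n,\ell}$, $S^{n,\ell}_a$ and $S^{n,\ell}_A$ is a convex combination (respectively an expectation) of input coordinates, so each of these operators maps $[0,1]^{n+1}$ into $[0,1]^{\dim}$; together with the boundary identities of Lemma~\ref{lem:actionV} this shows that, starting from either $(0,1)^{T}$ or from $e_k$ (both of which lie in $[0,1]^{\dim}$ with extreme coordinates $0$ and $1$), every reachable state retains these features. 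In particular every $w$ in the support of $\mu^{0,1}$ is finite-dimensional and satisfies $w_i\in[0,1]$, $w_0=0$ and $w_{\dim(w)-1}=1$.

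First I would establish \eqref{feqlimhxvt}. As $k\geq1$, the vector $e_k\in\R^{k+1}$ has $(e_k)_0=0$ and $(e_k)_k=1$, so Proposition~\ref{prop:asympV}(2) gives $V_t\xrightarrow[t\to\infty]{(d)}V^{0,1}_\infty$ relative to the discrete topology on the countable reachable set. Every reachable $w$ has $\lVert w\rVert_\infty\leq1$ by Lemma~\ref{lem:actionV}, hence $\lvert H(x,w)\rvert\leq1$ for all $x\in[0,1]$, since the Bernstein weights are nonnegative and sum to $1$. Thus $w\mapsto H(x,w)$ is a bounded function, automatically continuous for the discrete topology, and weak convergence yields $\E_{e_k}[H(x,V_t)]\to\E[H(x,V^{0,1}_\infty)]=f(x)$, which is \eqref{feqlimhxvt}.

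Continuity of $f$ and the boundary values follow directly from the same representation. For fixed $w$ the map $x\mapsto H(x,w)$ is a polynomial, and it is bounded by $1$ on the support of $\mu^{0,1}$; since $x_n\to x$ forces $H(x_n,w)\to H(x,w)$ for each $w$, dominated convergence gives continuity of $f$ on $[0,1]$. Evaluating \eqref{defh} at the endpoints yields $H(0,w)=w_0$ and $H(1,w)=w_{\dim(w)-1}$, so using $w_0=0$ and $w_{\dim(w)-1}=1$ on the support we obtain $f(0)=\E[(V^{0,1}_\infty)_0]=0$ and $f(1)=\E[(V^{0,1}_\infty)_{L_\infty}]=1$, where $L_\infty\coloneqq\dim(V^{0,1}_\infty)-1$ is a.s.\ finite.

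The point I would treat most carefully — the genuine crux — is the strict two-sided bound $f(x)\in(0,1)$ for $x\in(0,1)$, which is precisely where the frozen boundary coordinates and the $[0,1]$-valuedness are indispensable. Keeping only the top ($i=L_\infty$) term of the Bernstein sum and using that the remaining coefficients are nonnegative with $(V^{0,1}_\infty)_{L_\infty}=1$ gives the almost-sure bound $H(x,V^{0,1}_\infty)\geq x^{L_\infty}>0$. Symmetrically, $1-H(x,V^{0,1}_\infty)=\sum_i(1-(V^{0,1}_\infty)_i)\binom{L_\infty}{i}x^i(1-x)^{L_\infty-i}$ has nonnegative summands, and its $i=0$ term, whose coefficient is $1-(V^{0,1}_\infty)_0=1$, already forces $1-H(x,V^{0,1}_\infty)\geq(1-x)^{L_\infty}>0$. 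Since $L_\infty<\infty$ a.s., both bounds are strictly positive, and taking expectations gives $0<f(x)<1$. Beyond this, the only real technical care is the interchange of limit and expectation in \eqref{feqlimhxvt}, which is covered by the uniform bound $\lVert V_t\rVert_\infty\leq1$ from Lemma~\ref{lem:actionV}.
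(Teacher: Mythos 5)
Your proof is correct, and for the first three assertions (the limit representation \eqref{feqlimhxvt}, continuity, and the boundary values) it follows essentially the same path as the paper: convergence in distribution from Proposition~\ref{prop:asympV}(2), the uniform bound $\lvert H(x,w)\rvert\le 1$ on reachable states (the paper's \eqref{int1}, obtained from Lemma~\ref{lem:actionV}), then dominated convergence and the identities $H(0,w)=w_0$, $H(1,w)=w_{\dim(w)-1}$. Where you genuinely diverge is the crux $f(x)\in(0,1)$. The paper invokes the dynamics once more: since $e_1$ is positive recurrent for $V$ started at $e_1$, the stationary law has an atom $\P(V^{0,1}_{\infty}=e_1)>0$, and writing $f(x)=x\,\P(V^{0,1}_{\infty}=e_1)+\sum_{v\ne e_1}H(x,v)\,\P(V^{0,1}_{\infty}=v)$ with $H(x,v)\in[0,1]$ yields $x\,\P(V^{0,1}_{\infty}=e_1)\le f(x)\le 1-(1-x)\,\P(V^{0,1}_{\infty}=e_1)$. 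You instead argue purely structurally: the coalescence, selection and environmental operators act coordinatewise as convex combinations/expectations, so every state reachable from $(0,1)^T$ has coordinates in $[0,1]$ with frozen extreme coordinates $0$ and $1$; hence almost surely $x^{L_\infty}\le H(x,V^{0,1}_{\infty})\le 1-(1-x)^{L_\infty}$ with $L_\infty<\infty$, and taking expectations gives the strict bounds. Both arguments ultimately rest on the fact that coefficients of states in $C_V(0,1)$ lie in $[0,1]$ — a fact the paper asserts (when claiming $H(x,v)\ge 0$) but you justify explicitly, making your write-up more self-contained on this point. Your route avoids using positive recurrence of $e_1$ a second time and yields the slightly stronger quantitative bounds $\E[x^{L_\infty}]\le f(x)\le 1-\E[(1-x)^{L_\infty}]$, which dominate the paper's atom bounds since $\E[x^{L_\infty}]\ge x\,\P(V^{0,1}_{\infty}=e_1)$; the paper's route, in exchange, needs nothing about the operators beyond nonnegativity of $H$ on the reachable set.
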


\begin{proof}
	By Proposition~\ref{prop:asympV}, if $V_0=e_k$, $V_r$ converges in distribution to $V^{0,1}_{\infty}$. By \eqref{int1} we have that, for any $x,v \in [0,1] \times C^0_V(e_k)$, $|H(x,v)|\leq 1$. By the continuous mapping theorem we get \eqref{feqlimhxvt}. Since $V^{0,1}_{\infty}$ is supported on $C^0_V(e_1)$ by Proposition~\ref{prop:asympV}, and since the mapping $x \mapsto H(x,v)$ is continuous for any $v \in C^0_V(e_1)$, and $\lvert H(x,v)\rvert \leq 1$ for any $x,v \in [0,1] \times C^0_V(e_1)$, we get by dominated convergence that $h$ is continuous. It is plain that for any $v \in C^0_V(e_1)$, $H(0,v)=0$ and $H(1,v)=1$, so $h(0)=0$ and $h(1)=1$. We now justify that $h(x) \in (0,1)$ for $x \in (0,1)$. Note that for $v=e_1$ we have $H(x,v)=x$. Moreover, by Proposition \ref{prop:asympV} we see that $e_1$ is in the support of $\mu^{0,1}$, i.e. $\P(V^{0,1}_{\infty}=e_1)>0$. We have 
	\begin{align}
		h(x) = x \, \P(V^{0,1}_{\infty}=e_1) + \sum_{v \in C^0_V(e_1) \setminus \{e_1\}} H(x,v) \, \P(V^{0,1}_{\infty}=v). \label{combcvx}
	\end{align}
	For any $x,v \in [0,1] \times C^0_V(e_1)$ we have $H(x,v)\in [0,1]$ (because $|H(x,v)|\leq 1$ and $H(x,v)\geq0$). Since $\sum_{v \in C^0_V(e_1)} \P(V^{0,1}_{\infty}=v)=1$ and $\P(V^{0,1}_{\infty}=e_1)>0$, we get from \eqref{combcvx} that $h(x) \in (0,1)$ for $x \in (0,1)$. 
\end{proof}

\begin{proof}[Proof of Theorem \ref{fixext}]
	Let $h$ be as in Lemma \ref{lemf}. Let us show that $(h(X_t))_{t\geq 0}$ is a bounded martingale. To this end, we use the Markov property for $X$, Lemma \ref{lemf}, dominated convergence (which can be used since, by \eqref{int1}, we have $|\E_{e_1}[H(\tilde X_s,V_r)]|\leq \lVert e_1\rVert_\infty=1$), Fubini's theorem, {Theorem \ref{thm:Bernsteinduality}}, and the Markov property for $V$, to obtain 
	\begin{align*}
		\E_x&[h(X_{t+s})\mid \mathcal{F}_t]= \E_{X_t}[h(\tilde X_{s})]=\E_{X_t}[ \lim_{r\to\infty} \E_{e_1}[H(\tilde X_s,V_r)]]=\lim_{r\to\infty} \E_{X_t}[ \E_{e_1}[H(\tilde X_s,V_r)]] \\
		&= \lim_{r\to\infty}\E_{e_1}[\E_{X_t}[H(\tilde X_s,V_r)]]= \lim_{r\to\infty}\E_{e_1}[\E_{V_r}[H(X_t,\tilde V_s)]]=\lim_{r\to\infty} \E_{e_1}[H(X_t,\tilde V_{r+s})]=h(X_t).
	\end{align*}
	Now, by Doob's martingale convergence theorem, $F:=\lim_{t\to\infty} h(X_t)$ exists almost surely. Using {Theorem \ref{thm:Bernsteinduality}} and Lemma \ref{lemf} we get $\E_x[X_t^k]=\E_x[H(X_t,e_k)]=\E_{e_k}[ H(x,V_t)] \to h(x)$ as $t\to\infty$. 
	
	Therefore the positive integer moments of $X_t$ converge to the moments of the distribution $\berno{h(x)}$. Probability distributions on $[0,1]$ are completely determined by their positive integer moments, and convergence of those moments imply convergence in distribution. We thus get that the law of $X_t$ converges weakly to $\berno{h(x)}$. Since, by Lemma \ref{lemf}, $h$ is continuous and satisfies $h(0)=0$ and $h(1)=1$, we get by the continuous mapping theorem that the law of $h(X_t)$ converges weakly to $\berno{h(x)}$. Combining with the above almost sure convergence of $h(X_t)$, we get $F \sim \berno{h(x)}$; so in particular $F \in \{0,1\}$ a.s. Let us show that $X_t$ converges to $F$ almost surely. Let $\epsilon \in (0,1/2)\cap \mathbb{Q}$. Since, by Lemma \ref{lemf}, $h$ is continuous, positive on $(0,1]$, and strictly smaller than $1$ on $[0,1)$, we have $\delta(\epsilon):=\min \{ \min_{z \in [\epsilon,1]}h(z),1-\max_{z \in [0,1-\epsilon]}h(z) \}>0$. Since $h(X_t)$ converges to $F$ almost surely, there exists almost surely $T(\epsilon)$ such that $|h(X_t)-F|<\delta(\epsilon)$ for all $t \geq T(\epsilon)$. Distinguishing the cases $F=0$ and $F=1$ we see that this is possible only if $|X_t-F|<\epsilon$ for all $t \geq T(\epsilon)$. In conclusion, $\P(|X_t-F|<\epsilon \ \text{for all large $t$})=1$. Since this is true for all $\epsilon \in (0,1/2)\cap \mathbb{Q}$ we get that $X_t$ converges almost surely to $F$. This proves that the first claim holds with $X_\infty=F$. Finally, \eqref{exprfivproba} follows from $F \sim \berno{h(x)}$ and Lemma \ref{lemf}. 
\end{proof}

\subsection{The case with mutation: Proof of Theorem~\ref{thstadist} and Proposition \ref{recursionmoments}}\label{sec:proofstationary}
In this section, we work in the presence of bidirectional mutation, i.e. we assume \eqref{eq:bimut} and study the ergodicity of $X$ and its stationary distribution. 

\begin{proof}[Proof of Theorem \ref{thstadist}]
	Let $\tau \coloneqq \inf \{ r \geq 0:\, L_r=0 \}$. According to Proposition \ref{prop:absV}, for any $v \in \R^\infty$, we have $\P_{v}$-a.s. $\tau<\infty$ and, for $r \geq \tau$, $\dim(V_r)=1$ and $V_r=V_{\tau}=V_\infty$. In particular, for any $x \in [0,1]$, $H(x,V_r)=H(x,V_{\tau})=U_\infty$ for $r \geq \tau$. By Theorem \ref{thm:Bernsteinduality} we have $\E_x[X_t^k] = \E_x[H(X_t,e_k)] = \E_{e_k}[ H(x,V_t)]$. Since in addition,
	by \eqref{int1}, $\P_{e_k}$-a.s. we have $|H(x,V_r)|\leq 1$ for all $r\geq 0$, we get
	\begin{align}
		\left | \E_x[X_t^k] - \E_{e_k}[ U_\infty] \right | \leq \left | \E_{e_k}[ H(x,V_t) - H(x,V_{\tau}) ] \right | \leq 2 \P_{e_k} (\tau > t) \xrightarrow[t \rightarrow \infty]{} 0, \label{cvmomentsmut}
	\end{align}
	Therefore, all positive integer moments of $X_t$ converge to a limit (which does not depend on the initial condition $x$) as $t\to\infty$. Arguing as in the proof of Theorem \ref{fixext}, we find that there is a probability distribution $\mathcal{L}$ on $[0,1]$ such that for any $x\in[0,1]$, the law of $X_t$ under $\P_x$ converges in distribution to $\mathcal{L}$ as $t\to\infty$ and the moments of this law are given by \eqref{momentsloiinvar}. 
	By dominated convergence, the convergence of the law of $X_t$ towards $\mathcal{L}$ as $t\to\infty$ extends to any distribution on $[0,1]$ for $X_0$. Hence, $X$ has $\mathcal{L}$ as its unique stationary distribution of $X$. We see from Definition \ref{def:BCP} that, under \eqref{eq:bimut}, the event where the first transition of $V$ corresponds to a type $a$ (resp. $A$) mutation has positive $\P_{e_1}$-probability. Thus, $\P_{e_1}(U_\infty=1)>0$ and $\P_{e_1}(U_\infty=0)>0$.
	Since $\P_{e_1}$-a.s. all components of $V_r$ are non-negative for all $r$, we get by \eqref{int1} that $U_\infty \in [0,1]$ a.s., and hence $\rho_1 \in (0,1)$ (see \eqref{momentsloiinvar} for the definition of $\rho_1$). Let $\epsilon \in (0,1)$ and note that, by \eqref{cvmomentsmut}, there is $t_0>0$ such that $\E_x[X_{t_0}]<(1+\epsilon)^{1/2} \rho_1$ for all $x \in [0,1]$. In particular, by Markov inequality, $\P_x(X_{t_0}>(1+\epsilon) \rho_1)<(1+\epsilon)^{-1/2}$ for all $x \in [0,1]$. Therefore, by the Markov property, the set $[0,(1+\epsilon) \rho_1]$ is recurrent for the skeleton chain $(X_{nt_0})_{n \geq 0}$ and thus for $X$. A similar reasoning for $1-X$ shows that $[(1-\epsilon) \rho_1,1]$ is recurrent for $X$. In particular, we get $\P(\exists z \in [0,(1-\epsilon) \rho_1) \cup ((1+\epsilon) \rho_1,1] \ \text{s.t.} \ X_t \to z \textrm{ as $t\to\infty$})=0$. This concludes the proof. 
\end{proof}

\begin{remark}
	In \eqref{momentsloiinvar}, the expectation $\E_{v}[U_\infty]$ is expressed in terms of the sequence $(\rho_k)_{k \geq 1}$ when $v=e_n$ for some $n \in \N_0$. In order to prove Proposition \ref{recursionmoments}, we need such an expression for all $v \in \R^\infty$. Letting $t\to\infty$ in Theorem \ref{thm:Bernsteinduality} with $v\in\Rb^n$ and using Theorem \ref{thstadist}, Proposition \ref{prop:absV} and dominated convergence we get 
	\begin{align}
		\E_{v}[U_\infty] & = \sum_{i=0}^{n} \binom{n}{i} v_i \int_{[0,1]} z^i(1-z)^{n-i} \mathcal{L}(\dd z) = \sum_{i=0}^{n} \sum_{j=0}^{n-i} \binom{n}{i} \binom{n-i}{j} (-1)^j v_i \rho_{i+j} \nonumber \\
		& = \sum_{i=0}^{n} \sum_{k=i}^{n} \binom{n}{i} \binom{n-i}{k-i} (-1)^{k-i} v_i \rho_k = \sum_{k=0}^{n} \left ( \sum_{i=0}^{k} \binom{n}{i} \binom{n-i}{k-i} (-1)^{k-i} v_i \right ) \rho_k. \label{eveqmom}
	\end{align}
\end{remark}

\begin{proof}[Proof of Proposition \ref{recursionmoments}]
	Recall the transitions of the process $V$ given in Definition \ref{def:BCP} and the definition of $\alpha_n$ in the statement of the proposition. Let $n\geq 1$. A fist step decomposition of $V$ started at $V_0=e_n$ yields
	\begin{align}
		\alpha_n& \, \E_{e_n}[U_\infty]  = \sum_{\ell=2}^{n} {\binom{n}{\ell}} \lambda_{n,\ell} \E_{C^{n,\ell} e_n}[U_\infty] + n \sum_{\ell=2}^{\kappa}\beta_\ell \E_{D^{n,\ell} e_n}[U_\infty] \nonumber \\
		& +\sum_{c\in\{a,A\}}\bigg( \sum_{\ell=1}^{n} \binom{n}{\ell} \sigma_{n,\ell}^c \E_{S^{n,\ell}_c e_n}[U_\infty] + \sum_{\ell=1}^{n} \binom{n}{\ell} \left ( m^c_{n,\ell} + \theta_c \mathds{1}_{\ell=1} \right ) \E_{M^{n,\ell}_c e_n}[U_\infty]\bigg). \label{1stepdecomp}
	\end{align}
	Moreover, from Definition \ref{def:bramutoperators}, we see that 
	$$C^{n,\ell} e_n=e_{n-\ell+1},\quad S^{n,\ell}_A e_n=e_{n+\ell},\quad M^{n,\ell}_a e_n=e_{n-\ell}\quad\textrm{and}\quad M^{n,\ell}_A e_n=0.$$ 
	Let us now compute $S^{n,\ell}_a e_n \in \R^{n+\ell+1}$. For $\ell \in [n]$ and $i \in [n+\ell]$, consider the ASG starting with $n$ lines and having a simultaneous $a$-branching involving $\ell$ lines as a first transition. Draw uniformly at random (without replacement) $i$ from the $n+\ell$ lines and assign them type $a$; assign type $A$ to the other $n+\ell-i$ lines. Propagate the types to the $n$ initial lines following the propagation rules in the ASG (see Section \ref{sec:asg}). Then {$(S^{n,\ell}_a e_n)_i$} can be interpreted as the probability of the event where all the $n$ initial lines receive type $a$. The event occurs if and only if, during the type-assignment, 1) the $n-\ell$ lines not resulting from the branching all receive type $a$ and 2) in each of the $\ell$ pairs of lines resulting from the branching, at most one line receives type $A$. In particular, the event can occur only if there are at most $\ell$ lines receiving type $A$, that is, if $n+\ell-i\leq \ell$. We thus get {$(S^{n,\ell}_a e_n)_i=0$} for $i<n$. When $i \in \{n,\cdots,n+\ell\}$, for the event to be realized, there are $\tiny \binom{\ell}{n+\ell-i}$ choices to distribute the type $A$ in the pairs and then $2^{n+\ell-i}$ for the choices of type $A$ lines within those pairs. The total number of choices to distribute the type $A$ among all lines is $\tiny \binom{n+\ell}{n+\ell-i}$. Thus, $${(S^{n,\ell}_a e_n)_i}=2^{n+\ell-i} \binom{\ell}{n+\ell-i}/\binom{n+\ell}{n+\ell-i},\quad \textrm{for $i \in \{n,\cdots,n+\ell\}$}.$$ 
	
	Next, we compute {$D^{n,\ell} e_n \in \R^{n+\ell}$}. For $\ell \in ]\kappa]$ and $i \in [n+\ell-1]_0$, consider the ASG starting with $n$ lines and having a selective branching where a line splits into $\ell$ as a first transition. Draw uniformly at random (without replacement) $i$ from the $n+\ell-1$ lines and assign them type $a$; assign type $A$ to the other $n+\ell-1-i$ lines. Propagate the types to the $n$ initial lines following the propagation rules in the ASG (see Section \ref{sec:asg}). Then {$(D^{n,\ell} e_n)_i$} can be interpreted as the probability of the event where all the $n$ initial lines receive type $a$. The event can occur only if all $n+\ell-1-i$ lines receiving type $A$ are among the $\ell$ lines resulting from the branching and if they do not represent all of those $\ell$ lines (because $p_0^{(\ell)}=0$). We thus get {$(D^{n,\ell} e_n)_i=0$} for $i<n$. When $i \in \{n,\cdots,n+\ell-1\}$, for the event to be realized, {there are} $\tiny \binom{\ell}{n+\ell-1-i}$ choices to distribute the type $A$ into the $\ell$ lines resulting from the branching. In each of these possibilities, the line at the source of the branching receives type $a$ with probability $p_{i+1-n}^{(\ell)}$ and all other lines receive type $a$. The total number of choices to distribute the type $A$ among all lines is $\tiny \binom{n+\ell-1}{n+\ell-1-i}$. Hence, we infer that $${(D^{n,\ell} e_n)_i}=p_{i+1-n}^{(\ell)} \binom{\ell}{n+\ell-1-i}/\binom{n+\ell-1}{n+\ell-1-i},\quad \textrm{for $i \in \{n,\cdots,n+\ell-1\}$}.$$ 
	
	Plugging the obtained expressions of {$C^{n,\ell} e_n$}, $S^{n,\ell}_A e_n$, $M^{n,\ell}_a e_n$, $M^{n,\ell}_A e_n$, $S^{n,\ell}_a e_n$ and $D^{n,\ell} e_n$ into \eqref{1stepdecomp} {and combining with \eqref{momentsloiinvar} and \eqref{eveqmom}} yields
	\begin{align*}
		\alpha_n \rho_n  = &\sum_{\ell=2}^{n} \binom{n}{\ell} \lambda_{n,\ell} \rho_{n-\ell+1} + \sum_{\ell=1}^{n} \binom{n}{\ell} \sigma_{n,\ell}^A \rho_{n+\ell} + \sum_{\ell=1}^{n} \binom{n}{\ell} \left ( m^a_{n,\ell} + \theta_a \mathds{1}_{\ell=1} \right ) \rho_{n-\ell} \\
		& + n \sum_{\ell=2}^{\kappa}\beta_\ell \sum_{k=n}^{n+\ell-1} \left ( \sum_{i=n}^{k} (-1)^{k-i} p_{i+1-n}^{(\ell)} \frac{\binom{n+\ell-1}{i} \binom{n+\ell-1-i}{k-i} \binom{\ell}{n+\ell-1-i}}{\binom{n+\ell-1}{n+\ell-1-i}} \right ) \rho_{k} \nonumber \\
		& + \sum_{\ell=1}^{n} \binom{n}{\ell} \sigma_{n,\ell}^a \sum_{k=n}^{n+\ell} \left ( \sum_{i=n}^{k} (-1)^{k-i} 2^{n+\ell-i} \frac{\binom{n+\ell}{i} \binom{n+\ell-i}{k-i} \binom{\ell}{n+\ell-i}}{\binom{n+\ell}{n+\ell-i}} \right ) \rho_{k}.
	\end{align*}
	{This can be rewritten as} 
	\begin{align*}
		\alpha_n\rho_n =& \sum_{k=1}^{n-1} {\binom{n}{k-1}} \lambda_{n,n-k+1} \rho_{k} + \sum_{k=n+1}^{2n} \binom{n}{k-n} \sigma_{n,k-n}^A \rho_{k} + \sum_{k=0}^{n-1} {\binom{n}{k}} \left ( m^a_{n,n-k} + \theta_a \mathds{1}_{k=n-1} \right ) \rho_{k} \\
		& + \sum_{k=n}^{n+\kappa-1} \left ( \sum_{\ell=2 \vee(k+1-n)}^{\kappa}n\beta_\ell \sum_{i=n}^{k} (-1)^{k-i} p_{i+1-n}^{(\ell)} \frac{\binom{n+\ell-1}{i} \binom{n+\ell-1-i}{k-i} \binom{\ell}{n+\ell-1-i}}{\binom{n+\ell-1}{n+\ell-1-i}} \right ) \rho_{k} \nonumber \\
		& + \sum_{k=n}^{2n} \left ( \sum_{\ell=1\vee(k-n)}^{n} \binom{n}{\ell} \sigma_{n,\ell}^a \sum_{i=n}^{k} (-1)^{k-i} 2^{n+\ell-i} \frac{\binom{n+\ell}{i} \binom{n+\ell-i}{k-i} \binom{\ell}{n+\ell-i}}{\binom{n+\ell}{n+\ell-i}} \right ) \rho_{k}.
	\end{align*} 
	Re-arranging terms we get \eqref{linrelmoments}. 
\end{proof}
\subsection*{Acknowledgment}
We would like to thank Markus Arnoldini for pointing us to literature relevant to the applications of coordinated mutation.
Fernando Cordero was funded by the Deutsche Forschungsgemeinschaft (DFG, German Research Foundation) --- Project-ID 317210226 --- SFB 1283. 
Gr\'egoire V\'echambre was funded by Beijing Natural Science Foundation, project number 1S24067. 

\addtocontents{toc}{\protect\setcounter{tocdepth}{2}}
\bibliographystyle{abbrvnat}
\bibliography{reference2}

\end{document}